\newtheorem{thm}{Theorem}[section]
\newtheorem{lemma}[thm]{Lemma}
\newtheorem{rmk}[thm]{Remark}
\newtheorem{prop}[thm]{Proposition}
\newtheorem{conj}[thm]{Conjecture}
\newtheorem{claim}[thm]{Claim}
\newtheorem{ack}[thm]{Acknowledgements}
\numberwithin{equation}{section}
\DeclareMathOperator{\F}{\mathbf{F}}
\begin{document}
\title{The Smale conjecture and min-max theory}
\author{Daniel Ketover}\address{Rutgers University\\  Busch Campus - Hill Center \\ 110 Freylinghausen Road, Piscataway NJ 08854 USA}
\author{Yevgeny Liokumovich}\address{University of Toronto Mississauga\\ Mathematical and Computational Sciences  \\ 3359 Mississauga Road \\ Mississauga, ON, L5L 1C6}
\thanks{D.K. was partially supported by NSF-PRF DMS-1401996. Y.L. was partially supported by
NSERC Discovery Grant and Sloan Fellowship.}

\maketitle

\begin{abstract}
We give a new proof of the Smale conjecture for $\mathbb{RP}^3$ and all lens spaces using minimal surfaces and min-max theory.    For $\mathbb{RP}^3$,  the conjecture was first proved in 2019 by Bamler-Kleiner using Ricci flow.  
\end{abstract}

\section{Introduction}

In 1959,  S. Smale proved that for the round $2$-sphere $\mathbb{S}^2$, the inclusion of the isometry group $O(3)$ into the diffeomorphism group $\mbox{Diff}(\mathbb{S}^2)$ is a homotopy equivalence and he conjectured that the analogous result holds for $\mathbb{S}^3$, that is that $O(4)\to\mbox{Diff}(\mathbb{S}^3)$ is a homotopy equivalence. 
In 1982,  A. Hatcher \cite{H} proved the ``Smale conjecture":
\begin{thm}[Hatcher 1982 \cite{H}]\label{hatcher} The inclusion $O(4)\to \mbox{Diff}(\mathbb{S}^3)$ is a homotopy equivalence.
\end{thm}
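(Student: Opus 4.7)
I would follow the min-max paradigm the paper develops for $\mathbb{RP}^3$ and lens spaces, in preference to Hatcher's original combinatorial approach via cut systems. The first step is the standard evaluation fibration
\[
\operatorname{Diff}(\mathbb{S}^3, \Sigma_0) \longrightarrow \operatorname{Diff}(\mathbb{S}^3) \longrightarrow \operatorname{Emb}(S^2, \mathbb{S}^3)
\]
for a fixed great sphere $\Sigma_0 \subset \mathbb{S}^3$, together with the analogous fibration for $O(4)$. The fiber $\operatorname{Diff}(\mathbb{S}^3, \Sigma_0)$ splits up to homotopy as two copies of $\operatorname{Diff}(B^3 \text{ rel } \partial)$, which is handled either by a one-dimension-down min-max (sweepouts of $B^3$ by disks) or by invoking the corresponding Cerf--Hatcher analysis. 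Everything is then reduced to showing that $\operatorname{Emb}(S^2, \mathbb{S}^3)$ deformation retracts onto the space of great $2$-spheres.

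To produce the deformation retract I would construct, canonically in $\Sigma$, a great sphere $P(\Sigma)$ together with a smooth isotopy from $\Sigma$ to $P(\Sigma)$. Equip $\mathbb{S}^3$ with the round metric. Each embedded $\Sigma$ bounds two $3$-balls $B_\pm$, giving a natural one-parameter sweepout of $\mathbb{S}^3$ obtained by radially collapsing $B_+$ to a point and then expanding $B_-$ back from a point. Running a Simon--Smith pull-tight on this sweepout within the class of smoothly embedded $2$-spheres, the maximum area is pushed down to the min-max width; the resulting min-max limit is an embedded minimal $2$-sphere by Simon--Smith genus control. Almgren's classification forces this minimal sphere to be totally geodesic, and multiplicity-one/index-one rigidity (Marques--Neves) forces the convergence to be smooth and single-valued. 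The pull-tight iterates supply the canonical isotopy from $\Sigma$ to $P(\Sigma)$.

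The hardest part is \emph{continuity of this canonical assignment in families}. The Almgren--Pitts discretized pull-tight is naturally multivalued, and its output is a priori only varifold-continuous in the input, whereas for a family $\Phi : S^k \to \operatorname{Emb}(S^2, \mathbb{S}^3)$ one needs continuity in the smooth topology. I would address this by performing parametrized min-max (à la Marques--Neves) over $S^k$, exploiting the non-degeneracy of great spheres as index $1$ critical points of area to upgrade varifold to smooth convergence, and using a partition of unity on $S^k$ to select the sweepout gauge continuously in the parameter. Two secondary issues must be resolved: (i) embeddedness throughout the isotopy, which follows from Meeks--Simon--Yau regularity and the strong maximum principle once the limit is known to be regular and multiplicity one; and (ii) continuity across the stratum where $\Sigma$ is itself already a great sphere, which is achieved by normalizing the pull-tight to vanish on the stationary stratum. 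These are precisely the technical ingredients the paper develops for $\mathbb{RP}^3$, and they should apply a fortiori in the simply connected setting of $\mathbb{S}^3$.
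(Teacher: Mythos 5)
The paper does not prove Theorem~\ref{hatcher}; it is quoted from Hatcher's 1982 paper and used as a black box throughout. Your proposal to \emph{derive} it by the paper's own min-max machinery is circular. The lens-space argument relies on the $\mathbb{S}^3$ Smale conjecture in at least two essential places: Theorem~\ref{first} (contractibility of $\mbox{Diff}_p(N,\partial N)$ for a handlebody $N$, which as the paper notes was upgraded from PL to smooth precisely using Hatcher's proof) is used to show $p_2$ is a fibration with contractible fiber; and the final retraction in Section~\ref{section_finalretraction} invokes Lemma~2 of~\cite{H2}, again flagged in a footnote as depending on the Smale conjecture for $\mathbb{S}^3$. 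Your own sketch hits the same wall immediately: the fiber $\operatorname{Diff}(\mathbb{S}^3,\Sigma_0)\simeq \operatorname{Diff}(B^3\ \mathrm{rel}\ \partial)^2$ in your evaluation fibration is contractible \emph{by} Hatcher's theorem, and ``the corresponding Cerf--Hatcher analysis'' you propose as an alternative is not an alternative --- Cerf alone gives only $\pi_0$. So the structure of the paper offers no route to Theorem~\ref{hatcher} that does not presuppose it.

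Separately, the ``canonical assignment $\Sigma\mapsto P(\Sigma)$'' via pull-tight is not how the paper handles the continuity problem, and for good reason. Min-max limits are neither unique nor continuous in families: the pull-tight and almost-minimizing selections involve noncanonical choices, and the output is only varifold-close, not smoothly close, to the stationary stratum. Your proposed fix (``partition of unity on $S^k$ to select the sweepout gauge'') does not resolve this, because you cannot interpolate between two distinct min-max limits through embeddings without a further argument. The paper instead sidesteps the issue entirely: it never builds a retraction pointwise. It argues by contradiction that a nontrivial relative class would force the width above $2\pi^2/p$, uses Lusternik--Schnirelman (Proposition~\ref{lsargument}) to extract a \emph{homologous} cycle landing in an $\mathbf{F}$-neighborhood of the minimal stratum, and only then runs the Hatcher--Ivanov Haken-manifold retractions (Propositions~\ref{hair} and~\ref{retract}) inside a tubular neighborhood $\Sigma\times[-r,r]$. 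If you want a genuinely independent analytic proof of Theorem~\ref{hatcher}, you would need a replacement for the handlebody/ball contractibility inputs; that is precisely what Bamler--Kleiner supply via Ricci flow, and it is not present in this paper's toolkit.
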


More generally,  the (generalized) Smale conjecture asks 
\begin{conj}\label{generalized}
If $(X,g)$ is a closed orientable Riemannian three-manifold, and $g$ a metric of constant sectional curvature $\pm 1$, then the inclusion of $\mbox{Isom}(M)$ in $\mbox{Diff}(M)$ is a homotopy equivalence.
\end{conj}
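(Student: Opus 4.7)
My strategy is to use min-max theory to assign to each diffeomorphism $\phi\in\mbox{Diff}(M)$ a canonical embedded minimal surface $\Sigma_\phi$, and then exploit the rigidity of minimal surfaces of controlled Morse index in spherical space forms to produce a continuous retraction $\mbox{Diff}(M)\to\mbox{Isom}(M)$. I will focus on the positively curved cases handled in the paper, namely $M=L(p,q)$ or $\mathbb{RP}^3$ with the round metric descended from $\mathbb{S}^3$. In these spaces the natural candidate canonical surface is $\Sigma_0$, the image of the Clifford torus under the covering map, which is an embedded minimal Heegaard torus of index one.

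The plan proceeds in three steps. First, I would fix a reference sweepout $\{\Sigma_t\}_{t\in[0,1]}$ of $M$ by surfaces of the same topological type as $\Sigma_0$, with $\Sigma_{1/2}=\Sigma_0$. Given $\phi\in\mbox{Diff}(M)$, I push the sweepout forward to $\{\phi(\Sigma_t)\}$ and run Almgren--Pitts min-max; by Marques--Neves regularity, the multiplicity-one theorem, and the catenoid estimate to prevent genus collapse, the min-max critical set should be a single smooth embedded minimal torus $\Sigma_\phi$, whose width varies continuously with $\phi$. Second, I would establish a rigidity statement: any embedded minimal torus in $M$ of Morse index at most one and of the correct topological type is related to $\Sigma_0$ by an ambient isometry. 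Combined with continuity, this yields a map $r\colon\mbox{Diff}(M)\to\mbox{Isom}(M)/\mbox{Stab}(\Sigma_0)$, which I would then lift to $\mbox{Isom}(M)$ via a local section.

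Finally, I would compare the fibration
\begin{equation*}
\mbox{Diff}(M,\Sigma_0)\longrightarrow\mbox{Diff}(M)\longrightarrow\mbox{Emb}(\Sigma_0,M)
\end{equation*}
with its isometric analogue. Since $M\setminus\Sigma_0$ is a pair of solid tori, the homotopy type of $\mbox{Diff}(M,\Sigma_0)$ reduces to classical two-dimensional computations together with Hatcher's results on $\mbox{Diff}$ of a solid torus, while $\mbox{Emb}(\Sigma_0,M)$ is controlled by $r$. The hardest step I anticipate is the rigidity/uniqueness in step two: even in $\mathbb{S}^3$, uniqueness of the Clifford torus among minimal tori (Brendle's resolution of the Lawson conjecture) applies only to embedded surfaces, and in lens spaces one must further rule out candidates arising as quotients of higher-genus Lawson surfaces. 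Securing this, together with continuity of $\Sigma_\phi$ across $\mbox{Diff}(M)$ without multiplicity jumps or degenerations under deformation of $\phi$, will require the most refined form of parametric min-max theory and will be the technical heart of the argument.
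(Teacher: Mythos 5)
Your proposal hinges on a step that does not work: assigning to each $\phi\in\mbox{Diff}(M)$ a \emph{canonical} min-max surface $\Sigma_\phi$ that depends continuously on $\phi$. Min-max theory (whether Almgren--Pitts or Simon--Smith) is an existence theory: it produces \emph{some} stationary varifold from a minimizing sequence, but the limit is not unique, different min-max sequences in the same homotopy class can converge to different surfaces, and there is no known continuous selection of a critical point as the input data varies. The width $\omega(\phi)$ is continuous in $\phi$, but the surface is not, and in the present setting the width is in fact \emph{constant}: any genus-one sweepout of a round lens space has width $2\pi^2/p$ realized by a Clifford torus, so the width carries no information that distinguishes one $\phi$ from another. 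There is also a secondary problem with lifting a map $\mbox{Diff}(M)\to\mbox{Isom}(M)/\mbox{Stab}(\Sigma_0)$ to $\mbox{Isom}(M)$ via a local section: the quotient is a nontrivial bundle (e.g.\ for $\mathbb{RP}^3$ it is a $T^2$-bundle over $\tilde{RP^2\times RP^2}$), so there is no global section, and even locally the resulting map would not restrict to the identity on $\mbox{Isom}(M)$, which is required for a retraction.

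The paper's actual argument avoids both issues. It never tries to assign a canonical minimal surface to a diffeomorphism. Instead it fixes a relative homotopy/homology class $[a]\in\pi_k(\mathcal{T},\mathcal{T}_{min})$, extends the representative to a $(k+1)$-parameter sweepout, and shows that the associated min-max width \emph{equals} the boundary value $2\pi^2/p$ — i.e.\ the homotopy class of sweepouts is ``trivial''. This is where Brendle's classification and Wang--Zhou's multiplicity-one theorem enter: they rule out any minimal surface of area strictly larger than the Clifford torus arising as a min-max limit. From triviality of the width, a Lusternik--Schnirelman argument extracts a homologous sub-family whose surfaces are all $\mathbf{F}$-close to the space of minimal tori $\overline{\mathcal{T}}_{min}$. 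This is a deformation of a \emph{family} toward $\overline{\mathcal{T}}_{min}$, not a pointwise canonical assignment. The remaining steps (parametrically retracting filigree, then applying Hatcher--Ivanov's results for $T^2\times[-r,r]$) finish the contraction, and the Goeritz group computation of Bonahon controls the parameterizations, handling the $\mathbb{RP}^3$ ambiguity that your quotient-by-stabilizer approach would also need to resolve but has no mechanism for.
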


For hyperbolic manifolds, Conjecture \ref{generalized} was obtained by Gabai using his insulator technology \cite{G}.   For spherical space-forms, in many cases where $M$ contains a geometrically incompressible embedded Klein bottle,  Conjecture \ref{generalized} was proved by Ivanov \cite{I}.  The remaining cases of $M$ admitting Klein bottles were obtained by McCullough-Rubinstein \cite{MR}.   If $M$ is a lens space other than $\mathbb{RP}^3$, then the conjecture was proved by Hong-Kalliongis-McCullough-Rubinstein \cite{HKMR}.  This left open the case of $\mathbb{RP}^3$ together with the spherical space-forms of Heegaard genus $2$ not admitting Klein bottles (such as the Poincar\'e homology sphere).  

For spaces $M$ with metrics of constant sectional curvature equal to $0$ (such as $M=T^3$), or reducible manifolds, the inclusion of $\mbox{Isom}(M)$ into $\mbox{Diff}(M)$ is often not surjective on connected components.  The conjecture for such spaces needs to be weakened to account for this (see Section 1 in \cite{HKMR} for a discussion).  For Haken manifolds, the correct analog of Conjecture \ref{generalized} was proved by Hatcher and Ivanov (\cite {H}, \cite{I}).

Given the complexity of Hatcher's arguments,  a long-standing problem (No.\ 30 in S.T.\ Yau's list \cite{Y}, Section 1.4 in \cite{HKMR}) has been to use methods of geometric analysis to give an analytic proof of Hatcher's result or to address the remaining cases of Conjecture \ref{generalized} (including $\mathbb{RP}^3$).  Recently both of these goals were achieved by Bamler-Kleiner \cite{BK} using Hamilton-Perelman's Ricci flow:

\begin{thm}[Bamler-Kleiner 2018 \cite{BK1},\cite{BK3}]
Conjecture \ref{generalized} is true.  
\end{thm}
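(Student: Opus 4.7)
The plan is to split Conjecture \ref{generalized} according to the sign of the curvature and the topology of $M$, invoking the established pieces of the literature where available and completing the residual cases with min-max theory.

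For constant curvature $-1$ I would cite Gabai's insulator theorem \cite{G}, which establishes the homotopy equivalence $\mathrm{Isom}(M)\to\mathrm{Diff}(M)$ for every closed hyperbolic three-manifold. The remaining work is then for spherical space forms, which I would stratify by the existence of geometrically incompressible Klein bottles and by Heegaard genus. Ivanov \cite{I} handles most cases containing such a Klein bottle, and McCullough-Rubinstein \cite{MR} complete the remaining Klein bottle cases; Hong-Kalliongis-McCullough-Rubinstein \cite{HKMR} handle the lens spaces other than $\mathbb{RP}^3$. This leaves only $\mathbb{RP}^3$ and the spherical space forms of Heegaard genus two that contain no incompressible Klein bottle, notably the Poincar\'e homology sphere and the binary-polyhedral quotients of genus two.

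For each remaining manifold the strategy is to use equivariant min-max to produce a canonical minimal surface $\Sigma_g$ depending continuously on the round metric $g$. For $\mathbb{RP}^3$ the natural candidate is the minimal Klein bottle descended from the Clifford torus of $S^3$, or the totally geodesic $\mathbb{RP}^2$. For the Heegaard genus two manifolds it is the minimal genus two Heegaard surface produced by the Pitts-Rubinstein sweepout associated to the splitting. Given $g\mapsto\Sigma_g$ continuous and $\mathrm{Diff}(M)$-equivariant, the evaluation $\phi\mapsto\phi(\Sigma_{g_0})$ defines a fibration $\mathrm{Diff}(M)\to\mathrm{Emb}(\Sigma,M)$ whose base deformation retracts onto its isometric embeddings (via mean-curvature or harmonic-map flow from a nearby embedding to its min-max representative) and whose fiber consists of diffeomorphisms preserving $\Sigma_{g_0}$ setwise. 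The fiber splits along $\Sigma_{g_0}$ into the diffeomorphism groups of two handlebodies (or a twisted $I$-bundle and a solid torus in the $\mathbb{RP}^3$ case), for which the Smale-type statements are classical via Hatcher's handlebody results in \cite{H}. Assembling these pieces via the long exact sequence of the fibration gives the homotopy equivalence for the remaining cases.

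The principal obstacle lies in the Heegaard genus two case. The min-max limit could a priori degenerate by multiplicity-two convergence to a lower-genus minimal surface, by splitting into two minimal tori bounding solid tori, or by converging to a one-sided surface covered twice. Excluding these requires tight Morse index control (a genus-two lower bound via the Marques-Neves index estimates and orientability refinements), area comparisons against competing sweepouts to certify $\Sigma_g$ as the width, and a uniqueness theorem for minimal Heegaard surfaces of genus two in the round metric, playing the role of Brendle's Clifford-torus uniqueness in $S^3$. The continuous and equivariant dependence $g\mapsto\Sigma_g$ would then follow from an implicit function argument near the round metric once $\Sigma_{g_0}$ is shown to be nondegenerate modulo the isometry group.
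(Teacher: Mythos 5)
There is a genuine gap. The statement you are proving is quoted in the paper only as a citation: Conjecture \ref{generalized} in full generality is due to Bamler--Kleiner via Ricci flow, and the paper's own contribution is an independent min-max proof \emph{only} for lens spaces (including $\mathbb{RP}^3$). Your reduction to the residual cases ($\mathbb{RP}^3$ and the genus-two spherical space forms without Klein bottles) correctly mirrors the literature, but your treatment of the genus-two case hinges on ``a uniqueness theorem for minimal Heegaard surfaces of genus two in the round metric, playing the role of Brendle's Clifford-torus uniqueness.'' No such theorem exists; the paper states it explicitly as an open conjecture and says in the introduction that for the other genus-two spherical space forms ``there is no corresponding classification of minimal surfaces realizing the Heegaard genus.'' Without it you cannot rule out multiplicity, degeneration to lower genus or one-sided surfaces, nor define a canonical, continuous, $\mathrm{Diff}(M)$-equivariant assignment $g\mapsto\Sigma_g$ -- min-max produces \emph{some} minimal surface, not a continuously varying one, and even in the lens space case the paper must work hard (Lusternik--Schnirelman pull-tight, Wang--Zhou multiplicity one, Brendle's classification, Goeritz group computations, and the Hatcher--Ivanov collar retraction) rather than extract continuity from the min-max theorem itself.

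A second unsupported step is the claim that the base of your evaluation fibration ``deformation retracts onto its isometric embeddings (via mean-curvature or harmonic-map flow).'' Mean curvature flow of tori can fatten -- the paper notes this is only conjecturally excluded (White's conjecture) -- and no flow-based retraction of the space of embeddings onto minimal ones is known; this is precisely why the remaining cases were resolved by Bamler--Kleiner's Ricci flow analysis and why the present paper replaces any flow by the multi-step min-max/retraction scheme above. There are also smaller inaccuracies: in $\mathbb{RP}^3$ the Clifford torus descends to a minimal \emph{torus} (not a Klein bottle), and the totally geodesic $\mathbb{RP}^2$ is one-sided, hence not a Heegaard surface and not suited to the sweepout framework; the complement of the Heegaard torus in $\mathbb{RP}^3$ is two solid tori, not a twisted $I$-bundle plus a solid torus. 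As written, your argument proves neither the genus-two cases nor $\mathbb{RP}^3$, so it does not establish the theorem.
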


Roughly speaking,  an equivalent form of Hatcher's theorem asks whether the space of round metrics on $\mathbb{S}^3$ is contractible.   Beginning with an $S^k$-family of such metrics,  one can extend to the ball $B^{k+1}$ arbitrarily and flow the entire family under Ricci flow.   Using a detailed analysis of the singularities that occur, Bamler-Kleiner obtain the desired contraction.   Their work uses Perelman's work on Geometrization together with earlier work of Kleiner-Lott \cite{KL} and Bamler-Kleiner \cite{BK1} on uniqueness of the flow through singularities.


In this paper, we instead use minimal surfaces and min-max theory to give a new proof of the Smale conjecture for all lens spaces (which includes the case of $\mathbb{RP}^3$ that was only recently obtained using Ricci flow methods).

As for the case of Hatcher's theorem,  there are several equivalent formulations of the Smale conjecture for a lens space.  The one we prove is the following.  Let $L(p,q)$ be a lens space with $p\geq 2$.
\begin{thm}[Smale Conjecture for lens spaces]\label{lensspace}
The space of Heegaard tori in round $
L(p,q)$ retracts onto the subspace of Clifford tori\footnote{We will impose certain parameterizations on these spaces. See Theorem \ref{main2} for the precise statement.}.    
\end{thm}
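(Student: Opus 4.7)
My plan is to produce a canonical assignment of a Clifford torus to each Heegaard torus via min-max theory, and then use the sweepout itself to isotope each Heegaard torus to its associated Clifford torus, yielding the desired deformation retraction. For a Heegaard torus $T \subset L(p,q)$ bounding solid tori $V_1, V_2$ with cores $\gamma_1, \gamma_2$, I would first construct a natural one-parameter sweepout $\{T^t\}_{t \in [-1,1]}$ with $T^0 = T$ and $T^{\pm 1} = \gamma_{\pm 1}$, for example via level sets of a smoothed signed distance to the cores, arranged so that the construction depends continuously on $T$ in $C^\infty$-families.

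Next, I would apply the Simon--Smith min-max procedure to this sweepout to extract a smooth embedded minimal surface $\Sigma_T$ of area equal to the width $W(T) = \inf \max_t \mathrm{Area}(T^t)$. Lifting $\Sigma_T$ to the universal cover $\mathbb{S}^3$ and invoking Brendle's resolution of the Lawson conjecture -- the only embedded minimal torus in round $\mathbb{S}^3$ is the Clifford torus -- forces $\Sigma_T$ to be the image of a Clifford torus in $L(p,q)$. A genus and multiplicity argument of De Lellis--Pellandini/Ketover type should rule out higher-multiplicity limits, using that the sweepout is built from a single genus-one Heegaard surface of fixed isotopy class.

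The technical heart of the proof is showing that the assignment $T \mapsto \Sigma_T$ is continuous as $T$ ranges in a compact family $\{T_s\}_{s\in X}$. Continuity of the width $W(T_s)$ in $s$ follows from standard comparison sweepouts, but the min-max critical point is in general non-unique. My strategy would be to exploit that the Clifford tori in $L(p,q)$ form a fixed-dimensional closed orbit under $\mathrm{Isom}(L(p,q))$: any two min-max limits obtained from nearby sweepouts must be Hausdorff-close and hence related by a small isometry, so a selection principle produces a continuous lift after possibly subdividing the parameter space. This is where I expect the main obstacle to lie -- one must rule out neck-pinch degenerations and multiplicity jumps as $s$ varies, which likely requires catenoid-type estimates near $\gamma_1, \gamma_2$ and a careful analysis of pull-tight modifications that preserve the Heegaard structure throughout the sweepout.

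With $T \mapsto \Sigma_T$ established as continuous, the deformation retraction is then assembled by isotoping each $T$ to $\Sigma_T$ along a continuous reparametrization of the sweepout $\{T^t\}$; a final retraction inside the Clifford subspace (using its homogeneous-space structure under $\mathrm{Isom}(L(p,q))$) completes the proof. The specific parameterizations alluded to in the footnote should enter here to ensure all choices can be made equivariantly with respect to the relevant isometry groups, so that the resulting retraction is compatible with the precise formulation recorded in Theorem \ref{main2}.
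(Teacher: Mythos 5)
Your proposal and the paper both rely on Simon--Smith min-max and Brendle's theorem, but they diverge at a crucial point, and the divergence introduces a gap that I don't think your sketch can bridge.

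The central difficulty with the plan ``assign to each $T$ a min-max torus $\Sigma_T$, then isotope $T$ to $\Sigma_T$ along the sweepout'' is that the min-max surface is not one of the surfaces $T^t$ in your one-parameter sweepout. It is a \emph{varifold limit} of a min-max sequence $\chi_i(t_i)$ extracted from a minimizing sequence of competitor sweepouts, where each competitor is obtained by applying a (possibly large) family of diffeomorphisms to the original family. There is no path inside your original sweepout, or any canonical modification of it, that lands on $\Sigma_T$. So even if the assignment $T\mapsto\Sigma_T$ were well-defined and continuous, the final step of the construction is not available. Compounding this, your proposed ``selection principle'' for choosing $\Sigma_T$ continuously in $T$ is where I would expect the argument to break irreparably: min-max limits are genuinely non-unique, the critical set may fail to vary continuously, and the Hausdorff-closeness you invoke is not given by the theory (one only gets varifold convergence along a subsequence). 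Saying nearby limits differ by a small isometry is a statement about $\mathcal{T}_{min}$ itself, not about whether you can make a coherent choice across a parameter space without topological obstruction.

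The paper sidesteps both issues by never attempting a pointwise selection. Instead it works on relative homotopy and homology groups $\pi_k(\mathcal{T},\mathcal{T}_{min})$ inductively. Given a $k$-family $f$ with boundary in $\mathcal{T}_{min}$, it extends $f$ to a $(k+1)$-parameter sweepout and shows the min-max width of the saturated class \emph{equals} the boundary value $2\pi^2/p$; the Multiplicity One theorem of Wang--Zhou (for $\mathbb{RP}^3$ and $L(4p,2p\pm1)$) and Brendle's classification force this, since otherwise one would find an index-bounded minimal surface of genus $\leq 1$ with strictly larger area. With the width ``pinned,'' a Lusternik--Schnirelman argument (following Marques--Neves) extracts not a single torus but an entire relative $k$-cycle $\alpha_i$ whose image surfaces are uniformly $\mathbf{F}$-close to $\overline{\mathcal{T}}_{min}$, and which is homologous to the original class. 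The remaining work is then a parametric retraction: ``hair'' protruding from tubular neighborhoods of minimal tori is removed via Hatcher-type disk contractions, and the final retraction inside $T^2\times[-r,r]$ uses Hatcher--Ivanov's Haken-manifold technology (plus Bonahon's computation of the Goeritz group to control parameterizations). None of this requires a canonical selection of a min-max surface; the min-max theorem is used only to kill the possibility that the width jumps up, after which the existence of a nearby $k$-cycle of almost-Clifford tori follows for purely topological reasons. I'd suggest re-centering your argument on this relative-homotopy/LS framework, since the direct retraction you describe runs into the non-uniqueness and non-constructivity of min-max limits.
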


A \emph{Clifford torus} in a lens space is the projection of a Clifford torus in $\mathbb{S}^3$ to the lens space.   By a result of Brendle \cite{B}, Clifford tori are the only embedded minimal tori in a lens space (cf. Theorem \ref{classification}).  The connectedness of the space of Heegaard tori in lens spaces was obtained by Bonahon-Otal \cite{BO}.
 
The Smale conjecture for $\mathbb{RP}^3=L(2,1)$ most closely resembles the case of $\mathbb{S}^3$ due to both manifolds' large six-dimensional isometry group. The larger the isometry group, the more potential choice in retracting diffeomorphisms.  Its proof was expected to follow from arguments similar to those employed in Hatcher's proof for $\mathbb{S}^3$,  but perhaps due to the intricacy of \cite{H} no such argument has appeared in the intervening forty years.   In Bamler-Kleiner's work (\cite{BK1}, \cite{BK3}) for instance,  the Smale conjecture for $\mathbb{RP}^3$ was handled separately from other spherical space-forms,  due to its similarity to the $\mathbb{S}^3$ case.   

Let us briefly sketch the main ideas. 
Two important ingredients in our argument are the Lawson conjecture proved by Brendle \cite{B} and, for the case of $\mathbb{RP}^3$ and $L(4p,2p\pm 1)$, the Multiplicity One theorem recently proved by Wang-Zhou \cite{WZ}.  

Let $\mbox{Emb}(T^2,M)$ denote the space of Heegaard embeddings of tori into a round lens space $M$ and let $\mathcal{T}_{min}$ denote the space of minimal Clifford embeddings. Since $\mbox{Emb}(T^2,M)$ has the homotopy type of CW-complex, it suffices by Whitehead's theorem to show all relative homotopy groups $\pi_k(\mbox{Emb}(T^2,M), \mathcal{T}_{min})$ vanish.  Suppose instead one has a non-trivial relative homotopy class $[a]\in \pi_k(\mbox{Emb}(T^2,M), \mathcal{T}_{min})$ and let $k\geq2$ be the smallest such possible integer.  The class $[a]$ is represented by a map $f:D^k\to\mbox{Emb}(T^2,M)$ with $f|_{\partial D}$ consisting of minimal embeddings.

Using the Smale Conjecture for $\mathbb{S}^3$, we first extend $f$ to a family $\tilde{f}$ of $1$-sweepouts of $M$ (parameterized by $D^k\times[-1,1]$).  We then consider all $(k+1)$-parameter sweepouts homotopic to $\tilde{f}$ and agreeing with $\tilde{f}$ at the boundary and the corresponding min-max value for this homotopy class.  The key point is that the family is trivial,  meaning that its width (or critical value) is realized by the area of a Clifford torus equal to the maximal area of surfaces at the boundary of the $(k+1)$-sweepout.  For this step we need the resolution of the Lawson conjecture by Brendle \cite{B}. For $\mathbb{RP}^3$ and $L(4p,2p-1)$ ($p\geq1$), we additionally use the Multiplicity One Theorem of Wang-Zhou \cite{WZ} (see also earlier work of Sarnataro-Stryker \cite{DS}) to avoid getting projective planes with arbitrary even multiplicity or Klein bottles with multiplicity $2$ (respectively). Wang-Zhou's theorem
is the Simon-Smith counterpart to the Multiplicity One Theorem in the Almgren-Pitts setting \cite{Zh} obtained earlier using min-max theory for the prescribed mean curvature functional (\cite{ZZ1},\cite{ZZ2}).

By a Lusternick-Schnirelman argument (inspired by Marques-Neves' proof of the Willmore conjecture \cite{MN}), we may ``pull tight" the family $\tilde{f}$ and extract a new family $g:\Sigma^k\to\mbox{Emb}(T^2,M)$ that is \emph{homologous} to $f$, where $\Sigma^k$ is a (relative) $k$-cycle in $D^k\times[-1,1]$ with $\partial\Sigma^k=\partial D^k\times\{0\}$ whose image are embeddings all \emph{weakly close} (in the sense of varifolds) to the space of minimal tori.  
It follows  that most of the area of the surfaces in the family $g$ is contained
in suitable tubular neighborhoods around the minimal tori.
We parametrically retract small area disks that protrude outside of the 
tubular neighborhoods and then retract the family $g$ directly onto the family of Clifford tori.
Our argument uses earlier ideas of Hatcher and Ivanov (\cite{H},\cite{I}) that were originally applied in the Haken case.  This shows $g$ and $f$ are null-homologous.  By the Hurewicz theorem (as we assumed $k$ is the smallest nontrivial $\pi_k(\mbox{Emb}(T^2,M), \mathcal{T}_{min})$),  $f$ is also \emph{null-homotopic}.   This gives the desired contradiction\footnote{In the case $k=1$ (needed to apply the Hurewicz theorem), the same argument gives directly a null-homotopy rather than a null-homology.}.

We also need the computation of the oriented Goeritz groups of genus $1$ splittings of lens spaces due to Bonahon \cite{Bon} to control how the parameterizations of the family of tori may change during the above deformations.  These groups are all trivial except in the case of $\mathbb{RP}^3$.

Roughly speaking, the min-max process enables us to reduce the non-Haken case to the Haken case $T^2\times[-1,1]$ which was studied in the late 70s by Hatcher and Ivanov.   For Haken manifolds, incompressible surfaces are the canonical objects (and the Smale conjecture for such spaces amounts to making surfaces disjoint from an incompressible surface), while for non-Haken manifolds, sweep-outs and index $1$ minimal surfaces are the replacements for these objects (\cite{HKMR}, \cite{CGK}).  Unlike in the Ricci flow approach to study diffeomorphism groups, we do not need to keep track of any surgeries. 

The methods of this paper are robust with respect to higher genus Heegaard splittings. For the other spherical space-forms of genus $2$, however, there is no corresponding classification of minimal surfaces realizing the Heegaard genus. It would be natural to conjecture (analogous to Lawson's conjecture for minimal tori in $\mathbb{S}^3$, resolved by Brendle \cite{B}):

\begin{conj}
Each round spherical space-form of Heegaard genus $2$ admits exactly one embedded minimal surface of genus $2$ (up to ambient isometry).
\end{conj}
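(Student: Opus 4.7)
The plan is to establish the statement by combining a min-max existence argument with an equivariant rigidity result on the universal cover $\mathbb{S}^3$, with the latter being where the substantial difficulty lies. First, I would produce at least one embedded minimal surface of genus $2$ in the given space-form $M = \mathbb{S}^3/G$ by running a one-parameter Simon--Smith min-max over sweepouts by embedded Heegaard genus-$2$ surfaces of $M$. The Multiplicity One Theorem invoked throughout this paper, together with the Simon--Smith genus bounds, produces an embedded orientable minimal surface $\Sigma \subset M$ of genus at most $2$. Lower-genus degenerations of the min-max sequence can in principle be excluded by comparing the genus-$2$ width against the area of any embedded minimal sphere in $M$ (bounded below by $4\pi$) and against twice the area of any embedded minimal torus or Klein bottle in $M$; both comparisons reduce to the short classification of minimal surfaces of genus $\leq 1$ in $M$ (Brendle's theorem in the torus case, plus the analysis of the non-orientable cases handled in the body of this paper).

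For uniqueness, suppose $\Sigma_1, \Sigma_2 \subset M$ are two embedded minimal surfaces of genus $2$, and lift them to $\tilde\Sigma_i := \pi^{-1}(\Sigma_i) \subset \mathbb{S}^3$ under the universal covering $\pi: \mathbb{S}^3 \to M$. Since $G = \pi_1(M)\leq O(4)$ acts freely and $\pi_1(\Sigma_i) \to \pi_1(M)$ is surjective (because the $\Sigma_i$ are Heegaard surfaces), each $\tilde\Sigma_i$ is a closed connected embedded $G$-invariant minimal surface of genus $|G|+1$ by Riemann--Hurwitz. It suffices to exhibit some element of the normalizer $N_{O(4)}(G)$ carrying $\tilde\Sigma_1$ to $\tilde\Sigma_2$, since such elements descend to isometries of $M$. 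The natural canonical representative is the $G$-equivariant Lawson-type surface produced by the Karcher--Pinkall--Sterling construction; one expects it to be characterized among $G$-equivariant embedded minimal surfaces of genus $|G|+1$ in $\mathbb{S}^3$ by having equivariant Morse index $1$.

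The main obstacle is precisely this equivariant rigidity statement. Brendle's resolution of the Lawson conjecture relies on a two-point maximum principle that exploits the near-umbilicity of Clifford tori, and has no known analog in genus $\geq 2$. The most promising strategy mirrors the Lusternick--Schnirelman-type argument employed in this paper: show that the genus-$2$ Heegaard min-max on $M$ always produces a surface of equivariant Morse index exactly $1$; invoke equivariant compactness and smooth convergence for bounded-index minimal surfaces of fixed genus to control any sequence of such surfaces; and then run a deformation on the space of genus-$2$ Heegaard embeddings, in the spirit of the arguments of the present paper, to conclude that the moduli space of equivariant index-$1$ minimal surfaces of genus $|G|+1$ in $\mathbb{S}^3$ is connected and forms a single $N_{O(4)}(G)$-orbit. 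The hard step will be the equivariant lower index bound, which in the Simon--Smith setting with both fixed topology and prescribed symmetry group has not been established in the needed generality and likely requires new geometric input beyond the tools deployed here.
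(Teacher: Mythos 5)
The statement you are addressing is posed in the paper as an open \emph{conjecture}, not a theorem: the paper offers no proof and explicitly says that for genus-$2$ spherical space-forms there is no known classification of embedded minimal surfaces realizing the Heegaard genus analogous to Brendle's resolution of the Lawson conjecture. Your writeup correctly recognizes this. What you have produced is a program rather than a proof, and you have honestly flagged the step where it breaks down, so there is no disguised gap to expose --- but it is worth emphasizing that nothing in the paper closes it.

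A few remarks on the program itself. The existence half is plausibly within reach of the tools developed here: a one-parameter Simon--Smith min-max over genus-$2$ Heegaard sweepouts, together with the genus bound \eqref{genusbounds} and the Wang--Zhou multiplicity-one theorem, would yield an embedded orientable minimal surface of genus at most $2$; ruling out genus drop requires the area comparisons you indicate, which are not automatic but are of a kind the paper's methods can address. Your Riemann--Hurwitz computation for the lift is correct provided $\tilde\Sigma_i = \pi^{-1}(\Sigma_i)$ is connected; note that this follows directly from Frankel's theorem on $\mathbb{S}^3$ (exactly as in the proof of Theorem \ref{classification}), so you do not need to assume in advance that $\Sigma_i$ is a Heegaard surface. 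The genuine obstruction is precisely where you place it: uniqueness of the $G$-equivariant genus-$(|G|+1)$ embedded minimal surface in $\mathbb{S}^3$ up to the $N_{O(4)}(G)$ action. Brendle's two-point maximum principle has no known analogue beyond the torus, and the equivariant index lower bound you would need is not available in the Simon--Smith setting. Without that rigidity input, the Lusternik--Schnirelman-style deformation on the space of genus-$2$ Heegaard embeddings has no canonical target to land on, so the conjecture remains open, and your concluding assessment --- that new geometric input beyond the tools of this paper is required --- agrees with the paper's own framing.
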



We note finally that a much-studied approach to Conjecture \ref{generalized} and Theorem \ref{hatcher} is through the study of the mean curvature flow (MCF) for submanifolds.  One equivalent form to Hatcher's theorem (Theorem \ref{hatcher}), for instance, is that the space of embedded $2$-spheres in $\mathbb{R}^3$ is contractible and one can try to use mean curvature flow through singularities to construct the contraction (see \cite{BuHaHe} for results of this kind for the moduli space of two-convex spheres).  
Recently, Bamler-Kleiner \cite{bamler2023multiplicity} proved Ilmanen's Multiplicity One Conjecture (pg 37 in \cite{Il}), which was a key missing ingredient in realizing this program. \footnote{In lens spaces, one would need to understand the MCF of families of tori for which there is also the possibility of \emph{fattening} of the flow.  White \cite{White} has conjectured that fattening does not occur for genus $1$ surfaces.}

The organization of this paper is as follows.  In Section \ref{smalelens} we study the space of tori in lens spaces  and deduce an equivalent form of the Smale conjecture.  In Section \ref{minmaxsection} we introduce the necessary min-max notions.  In Section \ref{section_main} we give the min-max argument.  In Section \ref{section_ls} we give the Lusternick-Schnirelman argument to extract the desired family  of tori close as varifolds to the space of minimal tori.  In Section \ref{section_hair} we retract the filigree or ``hair" of such embeddings to a tubular neighborhood of the space of minimal tori, and in Section \ref{section_finalretraction} we complete the retraction to the space of minimal tori.  

\begin{ack}
D.K. would like to thank Profs.\ Toby Colding, Andr\'e Neves and Fernando Cod\'a Marques for conversations.  Thanks to Prof.\ Hyam Rubinstein for encouraging discussions at the 2015 meeting held at IMPA “Hyperbolic geometry and minimal surfaces.” The authors are grateful to the anonymous referee
for many valuable comments and corrections.
\end{ack}

\section{The Smale conjecture in lens spaces}\label{smalelens}
In this section, we relate the Smale conjecture in lens spaces to a conjecture about the space of embeddings of tori in such spaces.  First we consider the space of minimal tori in such spaces, and then the Goeritz group for lens spaces. 
\subsection{Space of minimal tori in lens spaces}\label{spaceoftori}
Let us denote the round $3$-sphere
\begin{equation}
\mathbb{S}^3=\{(z,w)\in\mathbb{C}^2\;|\; |z|^2+|w|^2=1\}.
\end{equation}
For each $p\geq 2$ and $q\geq 1$ with $q<p$ and $q$ relatively prime to $p$ we consider $\mathbb{Z}^q_p$, the cyclic group of order $p$ acting on $\mathbb{S}^3$ with generator $\xi_{p,q}$:
\begin{equation}
\xi_{p,q}(z,w) = (e^{2\pi i/p} z, e^{2q\pi i/p} w).
\end{equation}

The (round) lens space $L(p,q)$ is $\mathbb{S}^3/\mathbb{Z}^q_p$ endowed with the quotient metric.  The Clifford torus 
\begin{equation}
C = \{(z,w)\;|\; |z|^2=|w|^2=\frac{1}{2}\}\subset\mathbb{S}^3 
\end{equation}
is preserved by each group $\mathbb{Z}^q_p$ and descends to an embedded minimal torus in $L(p,q)$ with area $2\pi^2/p$.

Since $L(p,q)$ and $L(q,r)$ are isometric when $r+q =p$, we will assume in this paper $1\leq q\leq p/2$.  The lens space $L(2,1)$ is $\mathbb{RP}^3$. 

Johnson-McCullough (cf. Table 2 in \cite{JM}) computed the isometry groups $\mbox{Isom}(M)$ of lens spaces (and in fact all elliptic $3$-manifolds).  Let $\mbox{isom}(M)$ denote the connected component of the identity of $\mbox{Isom}(M)$.
We have

\begin{prop}[Isometry groups of lens spaces]
Up to diffeomorphism type, there holds:
\begin{enumerate}

\item $\mbox{isom}(\mathbb{RP}^3) \cong SO(3)\times SO(3)$
\item $\mbox{isom}(L(p,1)) \cong S^1\times SO(3)$ for $p>2$ even
\item $\mbox{isom}(L(p,1)) \cong S^1\times S^3$ for $p>2$ odd
\item $\mbox{isom}(L(p,q)) \cong S^1\times S^1$ for $q\neq 1$ and $p>2$.
\end{enumerate}
\end{prop}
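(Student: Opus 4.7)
The plan is to work in the standard double cover $\pi\colon S^3\times S^3\to SO(4)$, in which a pair of unit quaternions $(\lambda,\mu)$ acts on $x\in\mathbb H$ via $x\mapsto\lambda x\mu^{-1}$ with $\ker\pi=\{\pm(1,1)\}$. Using the identification $(z,w)\leftrightarrow z+wj$, a short quaternionic calculation shows that $\xi_{p,q}$ lifts to $\tilde\xi=(e^{i\alpha},e^{i\beta})\in S^3\times S^3$ with $\alpha=\pi(1+q)/p$ and $\beta=\pi(q-1)/p$. The identity component of $\mbox{Isom}(L(p,q))$ equals the identity component of the $SO(4)$-centralizer of $\xi_{p,q}$ modulo its intersection with $\langle\xi_{p,q}\rangle$, so I would carry out both the centralizer computation and the subsequent quotient in the cover.

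The only elementary input needed is that the centralizer of a non-central element $e^{i\theta}\in S^3$ is the maximal torus $\{e^{i\phi}\}$, while the centralizer of $\pm 1$ is all of $S^3$. Under the normalization $1\leq q\leq p/2$ with $\gcd(p,q)=1$, one checks that $\beta\in\pi\mathbb Z$ iff $q=1$, and $\alpha\in\pi\mathbb Z$ only when $p=2$. This produces three connected candidates for the cover-centralizer: $S^3\times S^3$ for $\mathbb{RP}^3=L(2,1)$, $S^1\times S^3$ for $L(p,1)$ with $p>2$, and $S^1\times S^1$ for $q\geq 2$ with $p>2$.

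I would then quotient by $K=\langle\tilde\xi,(-1,-1)\rangle$. For $\mathbb{RP}^3$, $K=\{(\pm 1,\pm 1)\}$ is the Klein four-group and $(S^3\times S^3)/K=SO(3)\times SO(3)$, establishing~(1). For $q\geq 2$ with $p>2$, the cover-centralizer is already a 2-torus and any finite quotient of $T^2$ is again $T^2$, giving~(4). For $L(p,1)$ with $p>2$ \emph{even}, $-1$ lies in $\langle e^{2\pi i/p}\rangle$, so $K$ splits as $\mathbb Z_p\times\mathbb Z_2$ acting independently on the two factors and the quotient becomes $(S^1/\mathbb Z_p)\times(S^3/\mathbb Z_2)=S^1\times SO(3)$, giving~(2).

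The hard part is the odd-$p$ case of~(3), where $K\cong\mathbb Z_{2p}$ is generated by $(-e^{2\pi i/p},-1)$ and does not split as a product of actions on the two factors. I would first quotient by the index-2 subgroup $\langle(e^{4\pi i/p},1)\rangle\cong\mathbb Z_p$, which acts only on the first factor; the intermediate quotient is again $S^1\times S^3$. The residual $\mathbb Z_2$ then acts on this $S^1\times S^3$ as $(z,q)\mapsto(-z,-q)$, exhibiting the total quotient as an $S^3$-bundle over $S^1/\mathbb Z_2=S^1$ with monodromy the antipodal map of $S^3$. Since $-\mathrm{id}\in SO(4)$ lies in the identity component of $\mbox{Diff}(S^3)$, this bundle is smoothly trivial and its total space is diffeomorphic to $S^1\times S^3$, consistent with the statement being asserted only up to diffeomorphism type.
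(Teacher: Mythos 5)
Your proof is correct, and it is a genuine argument where the paper gives none: the paper simply quotes Johnson--McCullough (Table 2 of \cite{JM}) for this computation. Your route via the unit-quaternion double cover $S^3\times S^3\to SO(4)$ is the standard and cleanest way to do it by hand, and all the individual steps check out: the lift $\tilde\xi=(e^{i\pi(1+q)/p},\,e^{i\pi(q-1)/p})$ is the correct image of $\xi_{p,q}$ under $(z,w)\leftrightarrow z+wj$ (since $j$ conjugates $e^{i\theta}$ to $e^{-i\theta}$), the identification $\mbox{isom}(L(p,q))=Z_0(\tilde\xi)/K$ with $K=\langle\tilde\xi,(-1,-1)\rangle$ is valid because the identity component of the normalizer of a discrete subgroup is its centralizer and $\tilde\xi$ and $(-1,-1)$ lie in the maximal torus inside $Z_0(\tilde\xi)$, and the case analysis on when $\alpha$ or $\beta$ lies in $\pi\mathbb{Z}$ (using $1\le q\le p/2$) is right. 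The quotient computations are also right, including the odd-$p$ case of (3): you correctly identify the residual $\mathbb{Z}_2$ as acting diagonally on $S^1\times S^3$, giving the twisted product that is $U(2)$ as a group but is diffeomorphic to $S^1\times S^3$ (trivialized by the section of the determinant fibration, or, as you say, because the antipodal map of $S^3$ is isotopic to the identity). What your approach buys over the paper's is a self-contained verification and transparency about exactly where each structure comes from; what you lose is the broader classification of the full (disconnected) isometry groups for all elliptic $3$-manifolds that Johnson--McCullough provide and which the paper draws on elsewhere.
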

 
For $M$ a $3$-manifold, and $\Sigma\subset M$ an embedded surface, let $\mbox{isom}^+(M,\Sigma)$ denote the subgroup of $\mbox{isom}(M)$ taking $\Sigma$ to itself such that the induced diffeomorphism on $\Sigma$ preserves orientation.
Similarly let $\mbox{isom}(M,\Sigma)$ denote the subgroup of $\mbox{isom}(M)$ of elements mapping $\Sigma$ to $\Sigma$.  The quotient $\mbox{isom}(M)/\mbox{isom}(M,\Sigma)$ denotes the space of unoriented and unparameterized images of $\Sigma$ under $\mbox{isom}(M)$.  Similarly $\mbox{isom}(M)/\mbox{isom}^+(M,\Sigma)$ denotes the space of oriented but unparameterized images of $\Sigma$ under isometries.

We need the following classification for minimal tori in lens spaces due to Brendle \cite{B}.  Let $\pi_{p,q}:\mathbb{S}^3\to L(p,q)$ denote the projection map.  We have:
\begin{thm}\label{classification}
If $T$ is an embedded minimal torus in round $L(p,q)$ then \begin{equation}T=\pi_{p,q}(\tilde{T})\end{equation} for some Clifford torus $\tilde{T}$ in $\mathbb{S}^3$.  
\end{thm}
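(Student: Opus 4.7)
The plan is to reduce the statement to Brendle's resolution of the Lawson conjecture in $\mathbb{S}^3$ by lifting $T$ along the Riemannian covering $\pi_{p,q}: \mathbb{S}^3 \to L(p,q)$. Setting $\tilde{T} := \pi_{p,q}^{-1}(T)$, the fact that $\pi_{p,q}$ is a local isometry and $T$ is a closed embedded minimal surface makes $\tilde{T}$ a closed embedded minimal surface in $\mathbb{S}^3$.

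The first step is to show that $\tilde{T}$ is connected. Writing $\tilde{T} = \tilde{T}_1 \sqcup \cdots \sqcup \tilde{T}_k$ as the decomposition into connected components, each $\tilde{T}_i$ is a closed embedded minimal surface in $\mathbb{S}^3$, and distinct components are disjoint. Since $\mathbb{S}^3$ has strictly positive Ricci curvature, Frankel's theorem forces any two closed minimal hypersurfaces to intersect, so $k = 1$.

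With connectedness in hand, $\pi_{p,q}|_{\tilde{T}}: \tilde{T} \to T$ is a $p$-fold covering of the torus $T$, and hence $\chi(\tilde{T}) = p \cdot \chi(T) = 0$. Since any closed embedded surface in the simply connected manifold $\mathbb{S}^3$ is two-sided and therefore orientable, $\tilde{T}$ must be a $2$-torus. Brendle's theorem then identifies $\tilde{T}$ as a Clifford torus, and the desired conclusion $T = \pi_{p,q}(\tilde{T})$ follows immediately from the definition of $\tilde{T}$. The only point that requires any real input is the connectedness reduction, which is handled cleanly by Frankel's theorem; the rest is bookkeeping on top of the Lawson conjecture.
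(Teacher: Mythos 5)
Your proof is correct and follows the same route as the paper: lift $T$ along the Riemannian covering, apply Frankel's theorem in $\mathbb{S}^3$ to get connectedness, use multiplicativity of the Euler characteristic to conclude the lift is a torus, and invoke Brendle's resolution of the Lawson conjecture. The paper states this more tersely but the logical content is identical.
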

\begin{proof}
Lifting $T$ to $\mathbb{S}^3$, we obtain a connected minimal surface $\tilde{T}=\pi^{-1}_{p,q}(T)$ by Frankel's theorem \cite{F}.   By the multiplicativity of the Euler characteristic under covering maps,  it follows that $\tilde{T}$ is a minimal torus.  By \cite{B},  $\tilde{T}$ is a Clifford torus.
\end{proof}

For $T$ a Clifford torus embedded in a lens space $M$, let us denote $\mathcal{T}_{min}^*(M)=\mbox{isom}(M)/\mbox{isom}(M,T)$ and let $\overline{\mathcal{T}}_{min}=\mbox{isom}(M)/\mbox{isom}^+(M,T)$.  The subgroups $\mbox{isom}(M,T)$ and $\mbox{isom}^+(M,T)$ consist of a union of $2$-tori (Lemma 10.2 in \cite{JM}).

In \cite{K2} (Proposition 4.2), the space of  Clifford embeddings in lens spaces was computed:
\begin{prop}[Space of unparameterized minimal tori]\label{unparam}
The space of unoriented unparameterized Clifford tori $\mathcal{T}^*_{min}$ is given by 
\begin{enumerate}
\item $\mathcal{T}_{min}^*(\mathbb{RP}^3) \cong RP^2\times RP^2$
\item $\mathcal{T}_{min}^*(L(p,1)) \cong RP^2$ for $p>2$.
\item $|\mathcal{T}_{min}^*(L(p,q))|=1$ for $q\neq 1$ and $p>2$.
\end{enumerate}
The space $\overline{\mathcal{T}}_{min}$ is given by 
\begin{enumerate}
\item $\overline{\mathcal{T}}_{min}(\mathbb{RP}^3) \cong\tilde{ RP^2\times RP^2}$ \footnote{This space is the orientable double cover of $RP^2\times RP^2$ given by $(S^2\times S^2)/\{\pm 1\}$, with $-1\cdot (x,y) = (-x,-y)$ denoting the diagonal action.}
\item $\overline{\mathcal{T}}_{min}(L(p,1)) \cong S^2$ for $p>2$.
\item $|\overline{\mathcal{T}}_{min}(L(p,q))|=2$ for $q\neq 1$ and $p>2$.
\end{enumerate}
Moreover, the image of any minimal embedded torus in $M$ is contained in $\mathcal{T}^*_{min}$.
\end{prop}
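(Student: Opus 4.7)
The plan is to combine Theorem~\ref{classification} with a direct analysis of the $\mbox{isom}(M)$-action on Clifford tori. A Clifford torus in $S^3$ is determined by an unordered orthogonal decomposition $\mathbb{R}^4 = V_1 \oplus V_2$ into two 2-planes, and such a torus descends to $L(p,q) = S^3/\mathbb{Z}_p^q$ as an embedded torus precisely when $\xi_{p,q}$ preserves the unordered pair $\{V_1, V_2\}$. By Theorem~\ref{classification} every minimal torus in $L(p,q)$ arises this way, so the last assertion of the proposition reduces to verifying transitivity of $\mbox{isom}(M)$ on the set of $\xi_{p,q}$-invariant decompositions.

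The first step is to classify the $\xi_{p,q}$-invariant orthogonal 2-plane decompositions by inspecting the action on $\mathbb{R}^4$. The element $\xi_{p,q} \in SO(4)$ is a simultaneous rotation by angles $2\pi/p$ and $2\pi q/p$ in the two standard coordinate 2-planes, and three regimes arise. When $p=2$, $\xi_{2,1} = -I$ preserves every 2-plane, so every Clifford torus in $S^3$ descends to $\mathbb{RP}^3$. When $p > 2$ and $q = 1$, $\xi_{p,1}$ is isoclinic and commutes with the standard complex structure $J$ on $\mathbb{C}^2$; its invariant 2-planes are exactly the $J$-complex lines, parameterized by $\mathbb{CP}^1$. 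When $p > 2$ and $2 \le q \le p/2$, the two rotation angles are distinct modulo $2\pi$ and not negatives of one another, so the unique invariant orthogonal pair consists of the standard coordinate planes.

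Next, I would compute the orbit spaces case by case using the explicit form of $\mbox{isom}(M)$. For $\mathbb{RP}^3$, I identify $\mathbb{RP}^3 \cong SO(3)$ so that $\mbox{isom}(\mathbb{RP}^3) = SO(3) \times SO(3)$ acts by left and right multiplication, and every Clifford torus arises as a product $L \cdot R$ of a pair of maximal tori $L, R \subset SO(3)$. Each maximal torus is determined by an unoriented axis in $\mathbb{R}^3$, giving $\mathbb{RP}^2$; pairs give $\mathcal{T}^*_{min}(\mathbb{RP}^3) \cong \mathbb{RP}^2 \times \mathbb{RP}^2$. An oriented maximal torus corresponds to an oriented axis ($S^2$), and reversing both axis orientations preserves the product orientation on $T = LR$, so $\overline{\mathcal{T}}_{min}(\mathbb{RP}^3) \cong (S^2 \times S^2)/\langle(-1,-1)\rangle = \widetilde{\mathbb{RP}^2 \times \mathbb{RP}^2}$. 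For $L(p,1)$ with $p > 2$, unordered pairs of orthogonal complex lines in $\mathbb{C}^2$ form $\mathbb{CP}^1/(\text{swap})$; the involution $L \leftrightarrow L^\perp$ is antipodal on $\mathbb{CP}^1 \cong S^2$, yielding $\mathbb{RP}^2$ unoriented, while the canonical $J$-orientations make ordered pairs parameterize the two orientations of each torus, giving $S^2$. For $L(p,q)$ with $p > 2$ and $2 \le q \le p/2$, there is a unique Clifford torus; the connected group $\mbox{isom}(L(p,q)) = S^1 \times S^1$ preserves each of its two orientations, so $|\overline{\mathcal{T}}_{min}| = 2$.

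The main obstacle will be keeping orientation conventions consistent across cases --- in particular, verifying that the natural involutions on the oriented parameter spaces realize the claimed quotients rather than further ones. Transitivity of $\mbox{isom}(M)$ on the invariant decompositions is routine: it is clear for $\mathbb{RP}^3$ (since $SO(3) \times SO(3)$ acts transitively on pairs of maximal tori), trivial for $L(p,q)$ with $p > 2$ and $q \neq 1$ (unique decomposition), and for $L(p,1)$ follows because the centralizer of $\xi_{p,1}$ in $O(4)$ contains $U(2)$, which acts transitively on ordered pairs of orthogonal complex lines in $\mathbb{C}^2$.
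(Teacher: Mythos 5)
The paper offers no proof of this proposition --- it simply cites Proposition 4.2 of \cite{K2} --- so your attempt to argue it directly from Theorem~\ref{classification} and the explicit action of $\xi_{p,q}$ on 2-plane decompositions of $\mathbb{R}^4$ is a genuinely different route, and its overall architecture (classify the $\xi_{p,q}$-invariant orthogonal splittings $\mathbb{R}^4 = V \oplus V^\perp$, then compute the $\mathrm{isom}(M)$-orbit spaces) is sound. Your case analysis of the invariant splittings ($p=2$: all; $p>2$, $q=1$: exactly the $J$-complex lines, giving $\mathbb{CP}^1$; $p>2$, $q\neq 1$: only the coordinate pair, since $\xi_{p,q}$ then has four distinct eigenvalues) is correct, and the $L(p,1)$ and $L(p,q)$, $q\neq 1$, conclusions follow as you say.

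There is, however, a concrete error in the $\mathbb{RP}^3$ case. You claim that under $\mathbb{RP}^3 \cong SO(3)$ ``every Clifford torus arises as a product $L\cdot R$ of a pair of maximal tori $L,R \subset SO(3)$.'' This is false. Lifting to $SU(2)$: the set $T_L T_R = \{ab : a\in T_L,\ b\in T_R\}$ is the orbit of the maximal torus $T_L\times T_R \subset SO(4)$ through the identity, and this orbit is a flat torus but \emph{not} a Clifford (minimal) torus except in special position. A direct check shows that if the axes of $T_L$ and $T_R$ make angle $\alpha$, the identity lies at distance $\arccos(\sin(\alpha/2))$ and $\arccos(\cos(\alpha/2))$ from the two core circles of the eigenplanes of $T_L\times T_R$, so $T_L T_R$ is Clifford only for $\alpha = \pi/2$; for $\alpha=0$ it degenerates to a circle. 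The fix is small but necessary: parametrize instead by the unordered pair $\{V,V^\perp\}$ of invariant 2-planes (equivalently, send $(L,R)$ to the \emph{unique equidistant orbit} of $L\times R$, not to $L\cdot R$ itself). This yields the identification $\mathcal{T}^*_{min}(\mathbb{RP}^3) \cong Gr(2,4)/(V\sim V^\perp) \cong \mathbb{RP}^2\times\mathbb{RP}^2$ and $\overline{\mathcal{T}}_{min}(\mathbb{RP}^3)\cong Gr(2,4) \cong (S^2\times S^2)/\{\pm1\}$ exactly as stated, so your final answers stand, but the justification via literal products of maximal tori does not.
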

Recall that $\mathcal{T}_{min}$ denotes the space
of parametrized minimal Clifford embeddings. Note that one can see directly from the results above that $\mathcal{T}_{min}\cong\mbox{isom}(M)$ is a $T^2$-bundle over $\mathcal{\overline{T}}_{min}(M)$  except when $M=\mathbb{RP}^3$.  
The pull-back of an element in $\mathcal{\overline{T}}_{min}(\mathbb{RP}^3)$ to $\mbox{isom}(\mathbb{RP}^3)$ consists of \emph{two} copies of $T^2$. The reason is that in addition to an element of $\mbox{isom}(\mathbb{RP}^3)$ flipping the orientation of the torus $T$ and flipping both handlebodies, there is an involutive element $\tau\in\mbox{isom}(\mathbb{RP}^3)$ taking a Clifford torus $T$ to itself, while preserving each handlebody as well as the orientation of $T$  (cf. Proposition 9.3 in \cite{JM})\footnote{Lifting to $\mathbb{S}^3$, the isometry $\tau$ acts on the Clifford torus as the hyperelliptic involution sending $(z,w)$ to $(\overline{z},\overline{w})$.}.   In fact, we have (letting $T^2$ denote the identity component of $\mbox{isom}(\mathbb{RP}^3,T)$):


\begin{equation} \label{eq_isom}
\begin{split}
\mbox{isom}(\mathbb{RP}^3)/T^2 &= (SO(3)\times SO(3))/(S^1\times S^1)\\
 & = (SO(3)/SO(2))\times (SO(3)/SO(2) ) \\
 & = S^2\times S^2,
\end{split}
\end{equation}
which is the double cover of $\overline{T}_{min}(\mathbb{RP}^3)=\tilde{RP^2\times RP^2}$, as expected.  The element $\tau$ is a generator of the appropriate Goeritz group, described in the next subsection.

\subsection{Goeritz group of lens spaces}
For $M$ a three-manifold, let $\mbox{diff}(M)$ denote the connected component of $\mbox{Diff}(M)$ that contains the identity. Given a Heegaard surface $\Sigma$ in $M$, let $\mbox{diff}(M,\Sigma)$ denote the subgroup of $\mbox{diff}(M)$ comprising the elements that take $\Sigma$ to $\Sigma$ (as a set) and preserve both handlebodies of the Heegaard splitting determined by $\Sigma$.  The (oriented) Goeritz group of the Heegaard splitting $G_0(M,\Sigma)$ is the set of connected components of $\mbox{diff}(M, \Sigma)$ in $\mbox{diff}(M)$. 

Since each element of $G_0(M,\Sigma)$ corresponds to diffeomorphisms contained in $\mbox{diff}(M)$ we may think of an element of $G_0(M,\Sigma)$ as a path of diffeomorphisms, beginning at the identity, taking $\Sigma$ to $\Sigma$ (as sets) at the end of the path.
F. Bonahon \cite{Bon} proved the following\footnote{One may also consider the Goeritz group permitting flipping orientations of $\Sigma$ in which case $L(p,1)$ for any $p\geq 2$ admit non-trivial Goeritz groups.} : 
\begin{prop}[Goeritz group of lens spaces (Proposition 9.3 \cite{JM}] \label{goeritz}Let $M$ be a lens space and $\Sigma$ a Heegaard torus.  
\begin{enumerate}
    \item $G_0(M,\Sigma)$ is trivial if $M\neq\mathbb{RP}^3$.
    \item $G_0(\mathbb{RP}^3,\Sigma)=\mathbb{Z}_2$ (generated by $\tau$).
\end{enumerate}
\end{prop}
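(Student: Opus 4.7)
The plan is to separate the computation into (a) the mapping-class group $\pi_0(\mbox{Diff}(M,\Sigma)_{\mathrm{hd}})$ of the genus-one Heegaard splitting, and (b) which nontrivial classes are realized in the identity component $\mbox{diff}(M)\subset\mbox{Diff}(M)$.

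For (a), I would use the restriction fibration
\[
\mbox{Diff}(V_1,\partial V_1)\times \mbox{Diff}(V_2,\partial V_2)\;\longrightarrow\; \mbox{Diff}(M,\Sigma)_{\mathrm{hd}}\;\longrightarrow\; \mbox{Diff}(\Sigma)_{\mathrm{ext}},
\]
whose target is the subgroup of $\mbox{Diff}(\Sigma)$ extending orientation-preservingly to each handlebody. The classical theorem of Hatcher--Ivanov that $\mbox{Diff}(V,\partial V)\simeq\ast$ for a solid torus $V$ contracts the fiber, giving $\mbox{Diff}(M,\Sigma)_{\mathrm{hd}}\simeq \mbox{Diff}(\Sigma)_{\mathrm{ext}}$, so the $\pi_0$ computation reduces to linear algebra on $H_1(\Sigma)\cong\mathbb{Z}^2$. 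Writing $\Sigma=C/\mathbb{Z}_p^q$ and choosing a basis in which the meridians of $V_1,V_2$ are $m_1=(p,-q)$ and $m_2=(0,1)$, any $A\in SL(2,\mathbb{Z})$ with $Am_i=\epsilon_i m_i$ ($\epsilon_i=\pm 1$) is forced to take the form $A=\begin{pmatrix}\epsilon_1 & 0\\ (\epsilon_2-\epsilon_1)q/p & \epsilon_2\end{pmatrix}$ in the standard basis; the condition $\det A=1$ requires $\epsilon_1=\epsilon_2$, so $A=\pm I$. Thus $\pi_0(\mbox{Diff}(M,\Sigma)_{\mathrm{hd}})\cong\mathbb{Z}_2$ for every lens space, generated by the hyperelliptic class $\tau$, which extends over each handlebody via $(\theta,z)\mapsto(-\theta,\bar z)$.

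For (b), I would lift $\tau$ to $\tilde\tau\colon(z,w)\mapsto(\bar z,\bar w)$ on $\mathbb{S}^3$. The identity $\tilde\tau\,\xi_{p,q}\,\tilde\tau^{-1}=\xi_{p,q}^{-1}$ shows that $\tau_{*}$ acts on $\pi_1(L(p,q))=\mathbb{Z}/p$ by inversion. For $p\ge 3$ this is nontrivial, so $\tau\notin\mbox{diff}(L(p,q))$; since the connected identity component $\mbox{Diff}(M,\Sigma)_{\mathrm{hd}}^0\simeq T^2$ automatically lies in $\mbox{diff}(M)$, we get $\mbox{diff}(M,\Sigma)=\mbox{Diff}(M,\Sigma)_{\mathrm{hd}}^0$ and $G_0(M,\Sigma)=\{1\}$. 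For $\mathbb{RP}^3$, inversion on $\mathbb{Z}/2$ is trivial and $\tau$ is an isometry sitting in the connected group $\mbox{isom}(\mathbb{RP}^3)\cong SO(3)\times SO(3)$, hence $\tau\in\mbox{diff}(\mathbb{RP}^3)$; both cosets $\mbox{Diff}(M,\Sigma)_{\mathrm{hd}}^0$ and $\tau\cdot\mbox{Diff}(M,\Sigma)_{\mathrm{hd}}^0$ then lie in $\mbox{diff}(\mathbb{RP}^3)$, yielding exactly two components and $G_0(\mathbb{RP}^3,\Sigma)\cong\mathbb{Z}_2=\langle\tau\rangle$.

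The central technical input is the Hatcher--Ivanov contractibility $\mbox{Diff}(V,\partial V)\simeq\ast$, which rigidifies the Heegaard splitting enough to reduce the three-manifold question to a torus mapping-class computation. Once this is granted, the rest is obstruction theory via the induced action on $\pi_1(M)$ together with the connectedness of $\mbox{isom}(\mathbb{RP}^3)$. The point most worth checking is that the identity component $\mbox{Diff}(M,\Sigma)_{\mathrm{hd}}^0$ is the full $T^2$ of translations (realized as isometric rotations of $L(p,q)$), so no hidden nontrivial element of $\mbox{Diff}(M,\Sigma)_{\mathrm{hd}}$ can slip into $\mbox{diff}(M)$ beyond the identity coset and, in the $\mathbb{RP}^3$ case, the $\tau$-coset.
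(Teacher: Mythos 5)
Your argument is correct. The paper does not supply its own proof of this proposition -- it states it with citations to Bonahon \cite{Bon} and to Proposition 9.3 of Johnson--McCullough \cite{JM} -- so there is no in-paper argument to compare against; your proof is a clean rendition of the route that underlies those references. Concretely: you use Hatcher's contractibility of $\mbox{Diff}(V,\partial V)$ for a solid torus (the paper's Theorem \ref{first}) to fiber the group of splitting-preserving diffeomorphisms over the subgroup of $\mbox{Diff}(T^2)$ extending across both handlebodies, observe that the constraint of carrying each meridian to $\pm$ itself within $\mbox{SL}(2,\mathbb{Z})$ forces $A=\pm I$, and then decide whether the hyperelliptic class $\tau$ lies in $\mbox{diff}(M)$ by examining its induced action (inversion) on $\pi_1(M)\cong\mathbb{Z}/p$: an obstruction for $p\geq 3$, and vacuous for $p=2$, where one sees directly that $\tau$ is an isometry in the connected group $\mbox{isom}(\mathbb{RP}^3)\cong SO(3)\times SO(3)$. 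The argument is sound; the determinant constraint $\epsilon_1\epsilon_2=1$ does the work uniformly (for $p\geq 3$ the integrality of $q(\epsilon_2-\epsilon_1)/p$ would exclude $\epsilon_1\neq\epsilon_2$ anyway, while for $p=2$ it is the determinant that rules it out), and restricting to $\mbox{SL}(2,\mathbb{Z})$ is justified because an element of $\mbox{diff}(M)$ preserving both handlebodies must preserve the orientation of $\Sigma$. One minor bookkeeping remark: for the $\pi_0$ computation you only need $\pi_0$ and $\pi_1$ of the fiber $\mbox{Diff}(V,\partial V)$ to vanish, which is weaker than full contractibility; but since the paper already invokes Theorem \ref{first}, there is no issue using the stronger statement.
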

 By \cite[Theorem 1]{gramain1973type}, \cite[p.219]{FaMa} we have that (up to homotopy type)
\begin{equation}
\mbox{Diff}(T^2) \simeq T^2\times\mbox{GL}(2,\mathbb{Z}), 
\end{equation}
and the element $\tau\in G_0(\mathbb{RP}^3,\Sigma)$ maps the component corresponding to the identity $I\in\mbox{GL}(2,\mathbb{Z})$ to $-I\in\mbox{GL}(2,\mathbb{Z})$.  

\subsection{Equivalent form of Smale Conjecture in lens spaces}
Let $M$ be a lens space and $T^2$ a $2$-torus.   Let $\mbox{Emb}(T^2, M)$ denote the space of smooth embeddings from $T^2$ to $M$.  We endow these spaces with the $C^\infty$ Fr\'echet topology. By a result of Palais, $\mbox{Emb}(T^2, M)$ and $\mbox{Diff}(M)$ have the homotopy type of CW-complexes (cf. \cite{P} or the discussion in \cite{HKMR}).  

The space $\mbox{diff}(M)$ maps to $\mbox{Emb}(T^2, M)$ in the following way.  First fix a reference Clifford torus embedding $f_0:T^2\to M$ and let \begin{equation}\mbox{Emb}_0(T, M)\subset \mbox{Emb}(T^2, M)\end{equation} denote the connected component of $f_0$.
Then define the map
\begin{equation}\label{action}
p_2: \mbox{diff}(M)\to \mbox{Emb}_0(T^2, M)
\end{equation}
by 
\begin{equation}
p_2(\Phi)=\phi\circ f_0.
\end{equation} 
By the isotopy extension theorem, the map \eqref{action} is surjective.

By a result of Palais \cite{P}, $p_2$ is a fibration.   The fiber $F_0=p_2^{-1}(f_0)$ consists of diffeomorphisms of $M$ that fix the torus $f_0(T^2)$ pointwise.  If $M$ is not flippable, then any element of $F_0$ also preserves each component of $M\setminus f_0(T^2)$.  In the case that $M$ is flippable (i.e.  any lens space of the form $L(p,1)$ for $p\geq 2$),  it is also true that any element of $F_0$ does not interchange the two handlebodies $M\setminus f_0(T^2)$.   Indeed, suppose $d\in F_0$ flips the handlebodies but fixes $f_0(T^2)$ pointwise.  Let $\{E_1, E_2, E_3\}$ be an oriented basis of $T_xM$ at $x\in f_0(T^2)$ where $E_1$ and $E_2$ are chosen to be in the tangent plane of the torus $f_0(T^2)$. Since $d$ fixes $f_0(T^2)$ pointwise, it follows that the pushforwards satisfy $d^*_x(E_1) = E_1$ and $d^*_x(E_2)=E_2$.  On the other hand, since $d$ swaps the handlebodies we get $d^*_x(E_3) = -\lambda^2 E_3+\alpha E_1+\beta E_2$ for some $\lambda\neq 0$.  The diffeomorphisms $d$ induces the oriented basis $\{E_1, E_2, -\lambda^2 E_3+\alpha E_1+\beta E_2\}$ of $T_xM$.  Since the determinant of this change-of-basis is negative, it follows that $d$ reverses orientation which contradicts the fact that $d$ is contained in the identity component of $\mbox{Diff}(M)$.

Thus $F_0$ consists of diffeomorphisms that fix each handlebody $M\setminus f_0(T^2)$ (set-wise) while leaving the respective boundaries fixed (point-wise).   By Hatcher's theorem on Haken manifolds\footnote{The result was originally proved in the PL category, which was upgraded to the smooth category using Hatcher's proof of the Smale conjecture.} (Section 1 in \cite{H2} and item (3) in the list of equivalent forms in the appendix of \cite{H}): \begin{thm} \label{first}
    Let $N^3$ be a handlebody.
   Then $\mbox{Diff}_p(N, \partial N)\simeq \{*\}$.\footnote{Throughout this paper $\mbox{Diff}_p(M,\partial M)$ denotes the diffeomorphisms of $M$ that fix $\partial M$ pointwise.} 
\end{thm}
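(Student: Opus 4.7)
The plan is to prove $\mbox{Diff}_p(N, \partial N) \simeq \{*\}$ by induction on the genus $g$ of the handlebody $N$, with the Smale conjecture (Theorem \ref{hatcher}) supplying the base case.

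For $g = 0$ we have $N = B^3$. Writing $S^3 = B^3 \cup_{S^2} B^3$ and using the restriction fibration $\mbox{Diff}(S^3) \to \mbox{Emb}(B^3, S^3)$ together with the contractibility of $O(4)/O(3) \simeq S^3$ afforded by the Smale conjecture, one reduces to $\mbox{Diff}_p(B^3, \partial B^3) \simeq *$. Concretely, I would exploit the fibration
\begin{equation*}
\mbox{Diff}_p(B^3, \partial B^3) \longrightarrow \mbox{Diff}(B^3) \longrightarrow \mbox{Emb}(\partial B^3, B^3),
\end{equation*}
in which the total space and the base each have homotopy type determined (via the Smale conjecture) by their isometry analogues, forcing the fiber to be contractible.

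For the inductive step $g \geq 1$, fix an essential properly embedded compressing disk $D \subset N$ with $\partial D \subset \partial N$, chosen so that cutting $N$ along $D$ produces a handlebody $N'$ of genus $g - 1$. By Palais's parametric isotopy extension theorem, the restriction map
\begin{equation*}
R \colon \mbox{Diff}_p(N, \partial N) \longrightarrow \mbox{Emb}_\partial(D, N)
\end{equation*}
is a Serre fibration, where $\mbox{Emb}_\partial(D, N)$ denotes the space of smooth embeddings of $D$ agreeing with the inclusion along $\partial D$ with interior in $\mbox{int}(N)$. The fiber over the inclusion consists of diffeomorphisms of $N$ fixing $\partial N \cup D$ pointwise; cutting $N$ open along $D$ identifies this fiber with $\mbox{Diff}_p(N', \partial N')$, which is contractible by the inductive hypothesis.

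The main obstacle is showing that the base $\mbox{Emb}_\partial(D, N)$ is contractible, which amounts to a parametric uniqueness statement: any family of properly embedded disks with common boundary circle in $\partial N$ may be isotoped parametrically, rel boundary, to a reference disk. I would attack this as a separate parametric disk lemma for handlebodies, in the spirit of Laudenbach's theorem, proved by a disk-exchange / innermost-curve argument applied to intersections of the family with a reference disk. Its PL version is classical, and the upgrade to the $C^\infty$ category is itself a consequence of the Smale conjecture (as noted in the footnote to the theorem). Granting contractibility of the base, the long exact sequence of the fibration $R$ gives $\pi_k \mbox{Diff}_p(N, \partial N) = 0$ for all $k \geq 0$, completing the induction by Whitehead's theorem since $\mbox{Diff}_p(N, \partial N)$ has the homotopy type of a CW complex.
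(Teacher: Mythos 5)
The paper does not prove this theorem; it cites it as Hatcher's result, pointing to Section~1 of \cite{H2} (Hatcher's work on Haken manifolds) and to item~(3) in the list of equivalent forms of the Smale conjecture in the appendix of \cite{H}, with a footnote explaining the PL-to-smooth upgrade. Your inductive sketch is in essence a reconstruction of Hatcher's own strategy rather than a different route, so the right question is whether your sketch is complete and correct.

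There are two problems. First, the fibration you write for the base case is wrong: restricting a diffeomorphism of $B^3$ to $\partial B^3$ gives an element of $\mbox{Diff}(S^2)$, not of $\mbox{Emb}(\partial B^3, B^3)$, so the sequence $\mbox{Diff}_p(B^3, \partial B^3) \to \mbox{Diff}(B^3) \to \mbox{Emb}(\partial B^3, B^3)$ is not the correct fibration. In fact no fibration argument is needed here at all, because $\mbox{Diff}_p(B^3, \partial B^3) \simeq \{*\}$ is itself one of Hatcher's listed equivalent reformulations of the Smale conjecture (and is precisely what the paper cites for the $g=0$ case). Second, and more seriously, the inductive step shifts the entire burden of the proof onto the claim that $\mbox{Emb}_\partial(D,N)$ is contractible, and that claim is where all of the real work lies: it is not a ``classical'' PL fact to be assumed, it is the substantive content of Hatcher's theorem, proved via a delicate parametric innermost-disk argument (the same kind of result used elsewhere in this paper via \cite{H3}). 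As written, your argument reduces the theorem to another nontrivial theorem of Hatcher's rather than proving it. You also implicitly assume the compressing disk $D$ is non-separating so that cutting yields a single genus $g-1$ handlebody; this is easily arranged but should be stated.
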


 By Theorem \ref{first},  $F_0$ is homotopy equivalent to the product of two contractible spaces, and thus is contractible.   
 From the long exact sequence for a fibration we get that $p_2$ induces isomorphisms on all homotopy groups.  By Whitehead's theorem,  $p_2$ is a homotopy equivalence.  

Let $p_1$ be the restriction of $p_2$ to $\mbox{isom}(M)$.   Let $\mbox{Emb}_{min}(T^2,M)$ denote the image of the reference embedding $f_0$ under $p_1$.   We have  

\begin{lemma} $p_1:\mbox{isom}(M)\to\mbox{Emb}_{min}(T^2,M)$ is a homeomorphism.\label{equal2}\end{lemma}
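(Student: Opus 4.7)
The plan is to realize $p_1$ as a continuous bijection from a compact space onto a Hausdorff space, since such a map is automatically a homeomorphism. Continuity of $p_1$ comes for free from that of $p_2$, and surjectivity onto $\mbox{Emb}_{min}(T^2,M)$ holds by the very definition of $\mbox{Emb}_{min}(T^2,M)$ as the image of $p_1$. So the two items to verify are injectivity of $p_1$ and the compact-to-Hausdorff framework.

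For injectivity I would argue as follows. If $\phi_1,\phi_2\in\mbox{isom}(M)$ satisfy $\phi_1\circ f_0=\phi_2\circ f_0$, then $\phi:=\phi_2^{-1}\circ\phi_1\in\mbox{isom}(M)$ fixes the Clifford torus $T:=f_0(T^2)$ pointwise. Since $\phi$ lies in the identity component of $\mbox{Isom}(M)$, it is in addition orientation-preserving. The key input is the classical rigidity statement that an isometry of a connected Riemannian manifold is determined by its value and differential at a single point. Applied at any $p\in T$, this reduces the problem to showing $d\phi_p=\mbox{id}$. The differential $d\phi_p$ is an orthogonal transformation of $T_pM$ that is the identity on the tangent plane $T_pT$, and so acts on the one-dimensional normal line as $\pm 1$. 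The choice $-1$ would make $\det d\phi_p=-1$, contradicting orientation-preservation. Hence $d\phi_p=\mbox{id}$, and therefore $\phi=\mbox{id}$ on all of $M$.

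For the homeomorphism conclusion I would note that since $M$ is a closed Riemannian manifold, $\mbox{Isom}(M)$ is a compact Lie group (Myers-Steenrod), and its identity component $\mbox{isom}(M)$ is likewise compact. The space $\mbox{Emb}(T^2,M)$ is Hausdorff in the $C^\infty$ Fr\'echet topology already declared earlier in this section, and hence so is its subspace $\mbox{Emb}_{min}(T^2,M)$. A continuous bijection from a compact space to a Hausdorff space is automatically a homeomorphism, which completes the argument.

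The only genuinely delicate step is the injectivity, which essentially uses \emph{both} the pointwise rigidity of Riemannian isometries (without which there is no reason for the kernel to be trivial) and the orientation-preservation of $\mbox{isom}(M)$ (without which the reflection across the Clifford torus would provide a non-trivial isometry of $M$ fixing $T$ pointwise). Everything after that is topological boilerplate.
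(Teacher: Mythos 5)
Your proof is correct, but you reach injectivity by a genuinely different route than the paper, and you also supply a step the paper leaves implicit. For injectivity, the paper observes that the fixed-point set of an isometry is a union of totally geodesic submanifolds, whereas the Clifford torus is not totally geodesic, so no non-trivial isometry of $M$ can fix $T$ pointwise. You instead invoke pointwise rigidity of isometries (an isometry of a connected Riemannian manifold is determined by its value and differential at a single point) and then use the orientation-preservation built into the definition of $\mbox{isom}(M)$ to rule out $d\phi_p$ acting as $-1$ on the normal line. Both arguments are valid; the paper's is marginally stronger in that it shows no element of the \emph{full} group $\mbox{Isom}(M)$, orientation-preserving or not, fixes a Clifford torus pointwise, whereas your argument genuinely needs the identity-component hypothesis to exclude the hypothetical normal reflection — which is all the lemma asks for, so this is not a defect, just a different trade-off. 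Finally, your compactness-plus-Hausdorff step to upgrade the continuous bijection to a homeomorphism is a welcome addition: the paper's stated proof only establishes bijectivity and tacitly leaves the topological upgrade to the reader, but the Myers--Steenrod compactness of $\mbox{Isom}(M)$ and the Hausdorffness of the Fr\'echet embedding space are exactly the facts needed.
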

\begin{proof}  Surjectivity follows by construction.  Let us show that $p_1$ is injective.   If $p_1(I_2)= p_1(I_1)$ then the isometry $I_1^{-1}I_2$ fixes $T$ pointwise.  But the fixed point set of an isometry is totally geodesic,  which the Clifford torus is not.  Thus $I_1^{-1}I_2$ is the identity element in $\mbox{isom}(M)$ and we get $I_1=I_2$.   \end{proof}

It follows from the previous discussion that we have the following commutative diagram:
\[
\begin{tikzcd}\label{commutative}
\mbox{isom}(M) \arrow[r, "\iota_1"] \arrow[d, "p_1"]
& \mbox{diff}(M) \arrow[d, "p_2" ] \\
\mbox{Emb}_{min}(T^2,M) \arrow[r, "\iota_2"]
& |[rotate=0]| \mbox{Emb}_0(T^2,M)
\end{tikzcd}
\]

In other words,\begin{equation}
p_2:(\mbox{diff}(M),\mbox{isom}(M))\to (\mbox{Emb}_0(T^2,M),\mbox{Emb}_{min}(T^2,M))
\end{equation}
is a homotopy equivalence of pairs. 
By Theorem \ref{basichomotopy} we obtain that $p_2$ induces an isomorphism
\begin{equation}\label{is}
\pi_k(\mbox{diff}(M),\mbox{isom}(M))\cong\pi_k(\mbox{Emb}_0(T^2,M),\mbox{Emb}_{min}(T^2,M))
\end{equation}

For ease of notation, let us denote
\begin{equation}
\mathcal{T} :=\mathcal{T}(M) = \mbox{Emb}_0(T^2,M), 
\end{equation}
and recall
\begin{equation}
\mathcal{T}_{min}:=\mathcal{T}_{min}(M)=\mbox{Emb}_{min}(T^2,M).
\end{equation}

Thus \eqref{is} becomes
\begin{equation}\label{areequiv}
\pi_k(\mbox{diff}(M),\mbox{isom}(M))\cong\pi_k(\mathcal{T},\mathcal{T}_{min}).
\end{equation}

By Proposition \ref{unparam}, $\mathcal{T}_{min}$ coincides with the space of minimal embedded tori in $M$ (with certain parameterizations), justifying the terminology.  

We can now show:

\begin{thm}\label{main2}
The Smale conjecture for a lens space is equivalent to the statement that the inclusion of $\mathcal{T}_{min}(M)$ in $\mathcal{T}(M)$ is a homotopy equivalence.  Equivalently,
\begin{equation}\label{relative2}
\pi_k(\mathcal{T},\mathcal{T}_{min}) = 0.
\end{equation}
for each $k\geq 0$.  
\end{thm}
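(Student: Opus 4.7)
The plan is to chain together the isomorphism \eqref{areequiv} with the long exact sequence of the pair and a $\pi_0$-level comparison of isometry and diffeomorphism groups. All of the substantive geometric input has already been assembled in this section; what remains is essentially a packaging argument built on top of the commutative diagram and \eqref{areequiv}.

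First, I would handle the easier equivalence. The long exact sequence of homotopy groups for the pair $(\mathcal{T}, \mathcal{T}_{min})$ shows that the vanishing \eqref{relative2} for every $k \geq 0$ is equivalent to $\iota_2: \mathcal{T}_{min} \hookrightarrow \mathcal{T}$ inducing isomorphisms on $\pi_k$ for all $k$. Since both $\mathcal{T}$ and $\mathcal{T}_{min}$ have the homotopy type of CW complexes by Palais' theorem, Whitehead's theorem then upgrades this to $\iota_2$ being a genuine homotopy equivalence. This takes care of the second equivalence in the theorem statement.

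Next, I would use the isomorphism $\pi_k(\mbox{diff}(M), \mbox{isom}(M)) \cong \pi_k(\mathcal{T}, \mathcal{T}_{min})$ from \eqref{areequiv} to translate \eqref{relative2} into the vanishing of $\pi_k(\mbox{diff}(M), \mbox{isom}(M))$ for every $k$. The same long exact sequence and Whitehead argument, applied on the diffeomorphism side (using that $\mbox{Diff}(M)$ also has the homotopy type of a CW complex by Palais), shows that this is equivalent to $\mbox{isom}(M) \hookrightarrow \mbox{diff}(M)$ being a homotopy equivalence of identity components. To promote this to the full statement $\mbox{Isom}(M) \simeq \mbox{Diff}(M)$ constituting the Smale conjecture, one needs that $\pi_0(\mbox{Isom}(M)) \to \pi_0(\mbox{Diff}(M))$ is a bijection. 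Injectivity is immediate from the fact that $\mbox{isom}(M) \subset \mbox{diff}(M)$ and the identity components match under the equivalence just obtained. For surjectivity I would invoke Proposition \ref{goeritz} together with Bonahon-Otal's connectedness result \cite{BO}: any orientation-preserving diffeomorphism can be isotoped to one taking a given Heegaard torus to a Clifford torus; when $M \ne \mathbb{RP}^3$ the oriented Goeritz group vanishes, so the diffeomorphism is isotopic to an isometry; for $\mathbb{RP}^3$ the only non-trivial class is generated by $\tau$, which is itself an isometry by the discussion around \eqref{eq_isom}.

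The main obstacle is the last $\pi_0$-level comparison, particularly the $\mathbb{RP}^3$ case, where one must recognize the non-trivial Goeritz generator as an isometry rather than merely a diffeomorphism. The rest of the argument is a purely formal consequence of the long exact sequence, Palais' CW theorem, Whitehead's theorem, and the machinery built up through \eqref{areequiv}; the geometric and topological content has already been absorbed into those statements.
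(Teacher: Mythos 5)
Your overall skeleton is the same as the paper's: reduce the Smale conjecture to a statement about the inclusion of identity components $\iota_1: \mbox{isom}(M)\hookrightarrow\mbox{diff}(M)$, use \eqref{areequiv} to transport relative homotopy groups, and close with the long exact sequence of the pair plus Whitehead/Palais. That part of your proposal is correct and matches the paper.

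Where you go astray is the $\pi_0$-level comparison $\pi_0(\mbox{Isom}(M))\to\pi_0(\mbox{Diff}(M))$. You try to \emph{derive} this bijection from the Goeritz group computation (Proposition \ref{goeritz}) together with Bonahon--Otal. This does not work: by definition the oriented Goeritz group $G_0(M,\Sigma)$ is the set of components of $\mbox{diff}(M,\Sigma)$ \emph{inside} the identity component $\mbox{diff}(M)$. It records which mapping classes of $\Sigma$ can be reached by an ambient isotopy of $M$, and says nothing at all about the other connected components of the full group $\mbox{Diff}(M)$. In particular, after using Bonahon--Otal to isotope a given $\phi\in\mbox{Diff}(M)$ to some $\phi_1$ with $\phi_1(\Sigma)=\Sigma$, the triviality of $G_0(M,\Sigma)$ does not let you conclude that $\phi_1$ is isotopic to an isometry, because you do not yet know that $\phi_1\in\mbox{diff}(M)$ -- that is precisely what you would be trying to prove. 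Similarly, the ``injectivity is immediate'' claim is not immediate: injectivity asserts that two isometries isotopic as diffeomorphisms are isotopic through isometries, and this requires Bonahon's mapping-class-group computation, not just the containment $\mbox{isom}(M)\subset\mbox{diff}(M)$.

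The fix is simple and is what the paper actually does: the statement that $\mbox{Isom}(M)\hookrightarrow\mbox{Diff}(M)$ induces a bijection on $\pi_0$ for lens spaces is itself a theorem of Bonahon \cite{Bon}, and should be cited as such rather than rederived from the Goeritz group statement. (Both Proposition \ref{goeritz} and the $\pi_0$ bijection are corollaries of Bonahon's computation of $\pi_0(\mbox{Diff}(M))$, but neither implies the other in the direction you need.) With that citation in place of your attempted derivation, your proposal reduces exactly to the paper's argument.
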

\begin{proof}
For any lens space, the inclusion of the isometry group in the diffeomorphism group is a bijection on path components (\cite{Bon}).  Thus the Smale conjecture (Conjecture \ref{generalized}) for $M$ is the assertion that $\iota_1$ is a homotopy equivalence.  By Whitehead's theorem it suffices to show that $\iota_1$ induces isomorphisms on all homotopy groups.     By the long exact sequence for pairs
\begin{equation}
\to\pi_k(\mbox{isom}(M))\to\pi_k(\mbox{diff}(M))\to \pi_k(\mbox{diff}(M),\mbox{isom}(M))\to \pi_{k-1}(\mbox{isom}(M))\to
\end{equation}
we see that $\iota_1$ induces isomorphisms if and only if $\pi_k(\mbox{diff}(M),\mbox{isom}(M))$ vanishes for all $k\geq 0$.   In light of \eqref{areequiv}, this is equivalent to \eqref{relative2}.  
\end{proof}

In light of Theorem \ref{main2} the main result of this paper is
\begin{thm}[Smale Conjecture for lens spaces]\label{main}
\begin{equation}\label{relative}
\pi_k(\mathcal{T},\mathcal{T}_{min}) = 0.
\end{equation} 
for each $k\geq 0$.
\end{thm}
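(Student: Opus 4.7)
The plan is to argue by contradiction using the relative Hurewicz theorem. Since $\mathcal{T}$ and $\mathcal{T}_{min}$ have the homotopy type of CW complexes, by Whitehead's theorem it suffices to show $\pi_k(\mathcal{T}, \mathcal{T}_{min}) = 0$ for every $k$. Suppose instead that some relative homotopy group is nontrivial and let $k \geq 1$ be the smallest such integer; note that $\pi_0(\mathcal{T}, \mathcal{T}_{min}) = 0$ by the Bonahon--Otal connectedness of the space of Heegaard tori. For $k \geq 2$ the relative Hurewicz isomorphism $\pi_k(\mathcal{T}, \mathcal{T}_{min}) \cong H_k(\mathcal{T}, \mathcal{T}_{min})$ reduces the problem to producing, for any representative $f \colon (D^k, \partial D^k) \to (\mathcal{T}, \mathcal{T}_{min})$, a null-homology of $f$ in the pair. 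The case $k = 1$ is handled by producing a null-homotopy directly through the same construction.

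First, I would extend $f$ to a sweepout. For each parameter $s \in D^k$ the Heegaard torus $f(s)$ bounds two solid tori in $M$; invoking the Smale conjecture for $\mathbb{S}^3$ (Theorem \ref{hatcher}) and Theorem \ref{first} to parametrize meridional disc foliations of these solid tori continuously in $s$, I obtain a $(k+1)$-parameter sweepout $\tilde{f} \colon D^k \times [-1,1] \to \mathcal{T}$ with $\tilde{f}(\cdot, 0) = f$ and the surfaces $\tilde{f}(s, \pm 1)$ degenerating to the core curves of the two handlebodies. By construction the restriction of $\tilde{f}$ to $\partial D^k \times [-1,1]$ is a family of Clifford $1$-sweepouts. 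Next, I would consider the min-max width
\begin{equation}
W = \inf_{h \sim \tilde{f}} \sup_{(s,t) \in D^k \times [-1,1]} \mathcal{H}^2(h(s,t)),
\end{equation}
where the infimum is over $(k+1)$-parameter sweepouts homotopic to $\tilde{f}$ rel boundary. The crucial claim is that $W$ equals the Clifford area $2\pi^2/p$ attained on the boundary. The lower bound is immediate; the upper bound follows by combining Brendle's resolution of the Lawson conjecture (Theorem \ref{classification}), which forces any embedded minimal torus produced by Simon--Smith min-max to be Clifford, with the Wang--Zhou multiplicity one theorem in the cases $M = \mathbb{RP}^3$ and $M = L(4p, 2p \pm 1)$, which rules out the appearance of a projective plane with even multiplicity or a Klein bottle with multiplicity two.

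Having pinned the width, I would carry out a Lusternik--Schnirelmann pull-tight argument in the spirit of Marques--Neves' proof of the Willmore conjecture. This produces a relative $k$-cycle $\Sigma^k \subset D^k \times [-1,1]$ with $\partial \Sigma^k = \partial D^k \times \{0\}$ and a map $g \colon \Sigma^k \to \mathcal{T}$ homologous to $f$ in $(\mathcal{T}, \mathcal{T}_{min})$, such that every $g(\sigma)$ is weakly close as a varifold to $\mathcal{T}_{min}$. Consequently the area of $g(\sigma)$ concentrates inside a thin tubular neighborhood of some Clifford torus, with only small-area filigree protruding. A two-stage retraction then finishes the proof: first a parametric disc-surgery argument, modeled on the Hatcher--Ivanov treatment of the Haken piece $T^2 \times [-1,1]$ around the Clifford torus, retracts the hair; then the resulting $C^0$-close family is pushed directly onto $\mathcal{T}_{min}$. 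Bonahon's Goeritz computation (Proposition \ref{goeritz}) ensures the parametrizations are controlled throughout, this being trivial except for $\mathbb{RP}^3$ where one must track the involution $\tau$. The resulting null-homology of $f$ contradicts the choice of $k$ via Hurewicz.

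The main obstacle is the min-max step: establishing that $W$ is exactly $2\pi^2/p$ and that an optimizing sweepout is everywhere varifold-close to $\mathcal{T}_{min}$. This requires setting up Simon--Smith min-max in a class compatible with the genus-one constraint so that the Wang--Zhou multiplicity one theorem applies, precisely because the naive min-max on $\mathbb{RP}^3$ would otherwise return a projective plane counted with even multiplicity and break the width computation. The subsequent hair-retraction, though technically delicate, is essentially a parametric version of classical Haken three-manifold arguments, so the genuinely new analytic content of the proof lies in the control of the min-max limits.
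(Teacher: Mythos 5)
Your proposal follows essentially the same route as the paper: extend $f$ to a $(k+1)$-parameter sweepout, pin the width to $2\pi^2/p$ via Simon--Smith min-max combined with Brendle's classification and the Wang--Zhou multiplicity-one theorem, pull tight by a Lusternik--Schnirelmann argument to a homologous cycle varifold-close to $\mathcal{T}_{min}$, retract the filigree parametrically, and then retract onto $\mathcal{T}_{min}$ via Hatcher--Ivanov collar technology, with Bonahon's Goeritz computation controlling parametrizations. The one small imprecision is your phrasing of the relative Hurewicz theorem as a direct isomorphism $\pi_k(\mathcal{T},\mathcal{T}_{min})\cong H_k(\mathcal{T},\mathcal{T}_{min})$: since $\mathcal{T}_{min}\cong\mbox{isom}(M)$ is not simply connected, the Hurewicz map is only surjective with kernel governed by the $\pi_1(\mathcal{T}_{min})$-action, and the paper instead phrases the inductive step via the induced bijection on $\pi_k(\mathcal{T},\mathcal{T}_{min})/\pi_1(\mathcal{T}_{min})$ before concluding triviality.
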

The case $k=0$ in \eqref{relative} follows from Bonahon-Otal \cite{BO}.

\section{Min-max theorem}\label{minmaxsection}
In this section, we  introduce the necessary min-max notions that we will need.    Throughout this paper, $\mathcal{H}^2(A)$ denotes the two-dimensional Hausdorff measure (``area") of a set.  

Set $I^k=[-1,1]^k\subset \mathbb{R}^k$.  Let $\{\Sigma_t\}_{t\in I^k}$ be a family of closed subsets of $M$ and $B\subset\partial I^k$.  We call the family $\{\Sigma_t\}_{t\in I^k}$ a genus $g$ sweepout if 
\begin{enumerate}
\item $\mathcal{H}^2(\Sigma_t)$ is a continuous function of $t\in I^k$
\item $\Sigma_t$ converges to $\Sigma_{t_0}$ in the Hausdorff topology as $t\to t_0$ .
\item For $t_0\in I^k\setminus B$, $\Sigma_{t_0}$ is a closed surface of genus $g$ and $\Sigma_t$ varies continuously 
with $t$ in the smooth topology.  
\item For $t\in B$, $\Sigma_t$ consists of a $1$-complex.  
\end{enumerate}

 Let $\{\Sigma_t\}_{t\in I^k}$ be a genus $g$ sweepout.    We will now describe the competitor sweepouts.   Let $F: I^k\to \mbox{diff}(M)$ be a continuous map with $F|_{\partial I^k} = id$ and expressed so that $D_F(*,t):M\to M$ is the diffeomorphism corresponding to $F(t)$.  Assume further that $F$ is contractible rel $\partial I^k$.  Then $\{D_F(\Sigma_t,t)\}_{t\in I^k}$ is also a genus $g$ sweepout.   Denote by $\Pi$ the family of sweepouts obtained from this procedure.  We say $\Pi$ consists of all sweepouts \emph{homotopic to $\{\Sigma_t\}_{t\in I^k}$ relative to $\partial I^k$}. Alternatively, we call $\Pi$ the \emph{saturation} of the family $\{\Sigma_t\}_{t\in I^k}$.  

We define the min-max width as
\begin{equation}
\omega(\Pi) = \inf_{\chi\in\Pi}\sup_{t\in I^k}  \mathcal{H}^2(\chi_t).
\end{equation}
A \emph{minimizing sequence} is a sequence of sweepouts $\{\Sigma^i_t\}_{t\in I^k}\in \Pi$ with
\begin{equation}
\lim_{i\to\infty} \sup_{t\in I^k} \mathcal{H}^2(\Sigma^i_t)= \omega(\Pi).
\end{equation}
Given a minimizing sequence,  a \emph{min-max sequence} is a sequence $\{\Sigma_{t_i}^i\}_{i=1}^\infty$ (for some $t_i\in I^k$) with the property that
\begin{equation}
\lim_{i\to\infty} \mathcal{H}^2(\Sigma^i_{t_i})=\omega(\Pi).  
\end{equation}
The main point of the min-max theory is that if the homotopy class $\Pi$ is non-trivial,  then some min-max sequence converges in a weak sense to a minimal surface.  In particular we have the following: 
\begin{thm}[Min-max Theorem]\label{minmax}
If 
\begin{equation}
\omega(\Pi) > \sup_{t\in\partial I^k}\mathcal{H}^2(\Sigma_t)
\end{equation} 
then there exists a minimizing sequence $\chi_i\in \Pi$ and min-max sequence $\{\chi_i(t_i)\}_{i=1}^\infty$ so that
\begin{equation}\label{howitconverges}
\chi_i(t_i)\to \sum_{i=1}^L n_i\Gamma_i\mbox{ as varifolds}
\end{equation}
where $\{\Gamma_i\}_{i=1,2,..,L}$ are a family of smooth embedded pairwise-disjoint minimal surfaces and $n_1,...,n_L$ are positive integers.  

Moreover, 
\begin{equation}\label{widthrealized}
\omega(\Pi) = \sum_{i=1}^L n_i\mbox{Area}(\Gamma_i).
\end{equation}

If $\mathcal{O}$ (resp. $\mathcal{N}$) denotes the subcollection of $i$ so that $\Gamma_i$ is orientable (resp.  non-orientable),  then we have the genus bounds with multiplicity
\begin{equation}\label{genusbounds}
\sum_{i\in\mathcal{O}} n_ig(\Gamma_i)+\frac{1}{2}\sum_{i\in\mathcal{N}} (n_i-1) g(\Gamma_i)\leq g.
\end{equation}
If we assume in addition that $\{\Sigma\}_{t\in I^k}$ are separating embeddings\footnote{Note that in this case, obtained by Wang-Zhou \cite{WZ}, the minimal surfaces arise from a different procedure, by perturbing the area functional and also perturbing the metric to be bumpy. Thus we do not obtain the minimizing and min-max sequence in precisely the same way.}, then there exist a positive integer $L'$, minimal surfaces $\{\Gamma'_i\}_{i=1,2,..,L'}$ and positive integers $n'_1,...,n'_{L'}$ as above, \eqref{genusbounds} and \eqref{widthrealized} hold (with $L'$ in place of $L$, $\Gamma'_i$ in place of $\Gamma_i$ and $n'_i$ in place of $n_i$) and in addition 
\begin{enumerate}
\item  If $\Gamma'_i$ is two-sided and unstable,  then $n'_i=1$. 
\item If $\Gamma'_i$ is one-sided,  then the connected double cover of $\Gamma'_i$ is stable.  
\end{enumerate}
\end{thm}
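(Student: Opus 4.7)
The plan is to assemble this theorem from three well-developed pieces of min-max theory rather than to prove it from scratch. First I would establish the existence of a min-max varifold limit $\sum_i n_i \Gamma_i$ together with the width identity \eqref{widthrealized} by following the classical Simon-Smith / Pitts / Colding-De Lellis scheme: pull the minimizing sequence tight so that every varifold limit along a min-max sequence is stationary in $M$ (the strict inequality $\omega(\Pi)>\sup_{\partial I^k}\mathcal{H}^2(\Sigma_t)$ is exactly what prevents the tightening from pushing the width onto $\partial I^k$), extract an almost-minimizing subsequence using Pitts' combinatorial argument, and then invoke his replacement argument together with Schoen-Simon regularity to conclude that the limit is a finite integer combination of smooth closed embedded minimal surfaces. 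The area identity then falls out of varifold convergence.

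Next I would derive the genus bound \eqref{genusbounds} from the De Lellis-Pellandini lower semi-continuity of genus under smooth convergence away from finitely many points, combined with a one-sided refinement. The point is that when $\Gamma_i$ is two-sided with multiplicity $n_i$, the approximating surfaces are (away from the bad points) $n_i$ normal graphs over $\Gamma_i$ and contribute $n_i g(\Gamma_i)$ to the total genus; when $\Gamma_i$ is one-sided with multiplicity $n_i$, the approximants form a connected cover of the orientable double cover of $\Gamma_i$, and a Riemann-Hurwitz count produces the improved contribution $\tfrac{1}{2}(n_i-1)g(\Gamma_i)$. Summing over $i$ and comparing against the genus of the sweepout gives \eqref{genusbounds}.

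For the separating hypothesis I would invoke the Simon-Smith Multiplicity One theorem of Wang-Zhou \cite{WZ}. Their scheme replaces the area functional with a sequence of prescribed mean curvature perturbations $\mathcal{H}^2(\Sigma)-\varepsilon_k\int_\Omega \phi_k$ in a bumpy metric, runs min-max at each perturbation to produce PMC surfaces of multiplicity one, and then passes to the limit $\varepsilon_k\to 0$. The minimal surfaces $\Gamma'_i$ obtained this way satisfy conditions (1) and (2); the footnote in the statement is the honest admission that the associated minimizing and min-max sequences are no longer the area-minimizing ones from Step 1. The main obstacle is this last step: one has to verify that the genus ledger \eqref{genusbounds} and the width identity \eqref{widthrealized} survive the double limit (letting both the PMC perturbation and the metric perturbation go to zero), and in particular that the multiplicity-one conclusion for two-sided unstable components is compatible with the Riemann-Hurwitz contribution of any surviving one-sided components. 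Modulo a careful packaging of Wang-Zhou's output, the remaining ingredients are by now routine.
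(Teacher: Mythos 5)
Your proposal matches the paper's treatment almost exactly: the paper also does not prove this theorem from scratch but assembles it from citations — existence and the width identity from Simon-Smith (as exposed by Colding-De Lellis, with the multi-parameter extension in the appendix of Colding-Gabai-Ketover), the multiplicity-weighted genus bound from Ketover (refining De Lellis-Pellandini/Simon-Smith, which lacked the $n_i$), and the separating/multiplicity-one refinement from Wang-Zhou, together with the same caveat that the Wang-Zhou minimizing and min-max sequences are produced differently. The only cosmetic difference is that you describe the Riemann-Hurwitz mechanism behind the one-sided genus contribution rather than directly naming the reference, but the route is the same.
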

The existence result is due to Simon-Smith \cite{SS} (see \cite{CD} for an exposition when $k=1$, and the appendix of \cite{CGK} for the extension to several parameters).  The latter two statements under the assumption that the sweepouts are separating were proved recently by Wang-Zhou (Theorem 7.2 in \cite{WZ}).  The genus bound \eqref{genusbounds} is due to the first-named author \cite{KG} (weaker genus bounds without the $n_i$ on the LHS were obtained by Simon-Smith \cite{SS} (cf. \cite{DP}).  

\begin{rmk}In the Simon-Smith theory one often forgets the parameterizations of the original sweepout generating the homotopy class $\Pi$ since the weak varifold convergence in \eqref{howitconverges} is not in the sense of parameterized maps. It will be important for us, however, to keep track of the parameterizations of our original family.  In both the pull-tight procedure and verification of the almost minimizing property, competitor sweepouts are constructed via ambient isotopies, which can be applied to parameterized maps as well as unparameterized.\end{rmk}

We will use the following fact about lens spaces:
\begin{thm}[Width of lens spaces]\label{lens}
Let $M$ be a round lens space and $\Pi$ the saturation of the genus $1$ Heegaard sweepout of $M$.   Then 
\begin{equation}\label{widthoneparam}
\omega(\Pi) = \frac{2\pi^2}{p}.
\end{equation}
Moreover, the only smooth minimal surface realizing the width is a (multiplicity $1$) Clifford torus.
\end{thm}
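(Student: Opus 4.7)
The upper bound $\omega(\Pi)\le 2\pi^2/p$ is obtained from the standard sweepout, namely the descent to $L(p,q)$ of the foliation $\{|z|^2=s\}_{s\in(0,1)}$ of $\mathbb{S}^3$. This produces tori $T_s$ of area $\tfrac{4\pi^2}{p}\sqrt{s(1-s)}$, maximized at the Clifford torus ($s=\tfrac{1}{2}$, area $2\pi^2/p$) and degenerating to the two core geodesics (one-complexes) at $s=0,1$. Non-triviality of this sweepout yields the strict inequality $\omega(\Pi)>0=\sup_{\partial I^1}\mathcal{H}^2(\Sigma_t)$ needed to apply Theorem \ref{minmax}; moreover, since every Heegaard torus separates $M$ into two solid tori, the Wang-Zhou strengthening (items (1)-(2)) is available.

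We therefore obtain $\omega(\Pi)=\sum_i n'_i\,\mathrm{Area}(\Gamma'_i)$ for pairwise disjoint embedded minimal $\Gamma'_i$ satisfying the genus bound \eqref{genusbounds}. Since $L(p,q)$ has positive Ricci curvature, Frankel's theorem reduces the sum to a single component $\Gamma$ with $\omega(\Pi)=n\,\mathrm{Area}(\Gamma)$; Frankel applied once more to $\mathbb{S}^3$ forces the lift $\tilde{\Gamma}:=\pi_{p,q}^{-1}(\Gamma)$ to be a connected minimal surface (which, being embedded in $\mathbb{S}^3$, is automatically orientable).

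It remains to classify $\tilde{\Gamma}$. If $\tilde{\Gamma}$ is a great $2$-sphere, then the Euler-characteristic identity $\chi(\tilde{\Gamma})=p\,\chi(\Gamma)$ combined with the requirement that the $\mathbb{Z}_p^q$-action on $\tilde{\Gamma}$ be free (so the quotient $\Gamma$ is smoothly embedded) forces $p=2$ and $\Gamma=\mathbb{RP}^2\subset\mathbb{RP}^3$; but the connected double cover of this $\mathbb{RP}^2$ is precisely that great $S^2$, whose Jacobi operator $L=\Delta_{S^2}+2$ carries the negative eigenvalue $-2$ from the constant function, violating the stability requirement in Wang-Zhou condition (2). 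Hence $\tilde{\Gamma}$ has positive genus, so Marques-Neves' resolution of the Willmore conjecture gives $\mathrm{Area}(\tilde{\Gamma})\geq 2\pi^2$ with equality iff $\tilde{\Gamma}$ is a Clifford torus (the latter characterization being Brendle's theorem, Theorem \ref{classification}). Combining with $n\,\mathrm{Area}(\Gamma)=n\,\mathrm{Area}(\tilde{\Gamma})/p\leq 2\pi^2/p$ and $n\geq 1$, we conclude $n=1$ and that $\tilde{\Gamma}$ is a Clifford torus. Therefore $\omega(\Pi)=2\pi^2/p$ and the width is uniquely realized by the multiplicity-one Clifford torus.

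The main obstacle is the $\mathbb{RP}^3$ case: a minimal $\mathbb{RP}^2\subset\mathbb{RP}^3$ has area $2\pi$, which is \emph{strictly less} than $\pi^2=2\pi^2/p$, so no area comparison alone suffices to exclude this limit. The essential input is the Wang-Zhou multiplicity-one theorem in its separating formulation, through which the instability of the lifted great sphere does the work.
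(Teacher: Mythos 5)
Your argument is correct, but it takes a genuinely different route than the paper. The paper's proof is a two-line citation: it invokes the Ketover--Marques--Neves catenoid-estimate theorem to conclude that the genus~$1$ min-max limit in a lens space is a multiplicity-one minimal \emph{torus} of index~$1$, and then applies Brendle's classification (Theorem~\ref{classification}). You instead rebuild the multiplicity-one and classification conclusions from scratch: the upper bound from the explicit foliation, Wang--Zhou's multiplicity-one theorem in the separating setting, Frankel's theorem twice (once in $L(p,q)$, once in $\mathbb{S}^3$ to force connectedness of the lift), and finally the Willmore conjecture to pin down the area of the lift. This is a legitimate alternative that trades a single powerful citation for a combination of the (arguably better-known) Willmore and Wang--Zhou inputs, and your isolation of the $\mathbb{RP}^2$ difficulty is exactly the right thing to flag.

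One refinement you should make explicit. After you conclude that $\tilde{\Gamma}$ is a Clifford torus with $n=1$, the quotient $\Gamma=\pi_{p,q}(\tilde{\Gamma})$ has $\chi(\Gamma)=0$ but is not yet known to be \emph{orientable}: in the lens spaces $L(4p,2p\pm 1)$ there exist $\mathbb{Z}_{p}^{q}$-invariant Clifford tori on which the generator acts orientation-reversingly, and the quotient is then a one-sided Klein bottle of the same area $2\pi^2/p$. To rule this out you should run the same Wang--Zhou condition~(2) argument you ran for $\mathbb{RP}^2$: the connected double cover of such a Klein bottle is a Clifford torus in an intermediate lens space, and for \emph{any} closed minimal surface in $\mathbb{S}^3$ or its quotients the constant function gives $Q(1,1)=-\int(|A|^2+\mathrm{Ric}(\nu,\nu))<0$, so the double cover is unstable and condition~(2) is violated. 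Once you note this (it is exactly the mechanism behind the paper's own footnote about $L(4p,2p-1)$), $\Gamma$ is forced to be two-sided, hence the orientable torus quotient, and your proof is complete.
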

\begin{proof}
By \cite{KMN} the width of $L(p,q)$ is achieved by a minimal torus $\Gamma$ of index $1$.   By Theorem \ref{classification} the result follows.  
\end{proof}

\section{Proof of Theorem \ref{main}}\label{section_main}
We prove $\pi_k(\mathcal{T},\mathcal{T}_{,min})=0$ for each $k\geq 1$ by induction on $k$.   Let us describe the inductive step.  Suppose $k\geq 2$ and $\pi_i(\mathcal{T},\mathcal{T}_{,min})=0$ for all $i<k$.   By the relative Hurewicz theorem (cf. Theorem 4.37 in  \cite{Ha}),  there is a natural map
\begin{equation}
H: \pi_k(\mathcal{T},\mathcal{T}_{min})\to H_k(\mathcal{T},\mathcal{T}_{min})
\end{equation}
descending to a bijection $H':\pi_k(\mathcal{T},\mathcal{T}_{min})/\pi_1(\mathcal{T}_{min})\to H_k(\mathcal{T},\mathcal{T}_{min})$.   
We will show:
\begin{claim}\label{mainclaim}
For each $k\geq 1$ and any $a\in \pi_k(\mathcal{T},\mathcal{T}_{min})$,  $H(a)=0$ in $H_k(\mathcal{T},\mathcal{T}_{min})$.   
\end{claim}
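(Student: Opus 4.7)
The plan is to produce a null-homology for any $a \in \pi_k(\mathcal{T}, \mathcal{T}_{min})$ by a four-step min-max argument that follows the introduction's sketch. First, I represent $a$ by a map $f: (D^k, \partial D^k) \to (\mathcal{T}, \mathcal{T}_{min})$ and extend it to a family $\tilde{f}$ of genus one Heegaard sweepouts of $M$ parameterized by $D^k \times [-1,1]$. On $\partial D^k \times [-1,1]$ the extension uses the canonical foliation by tori equidistant from the Clifford torus $f(x)$, $x \in \partial D^k$. On $D^k \times \{\pm 1\}$ one places $1$-complex spines in each handlebody bounded by $f(x)$; constructing these consistently in $x$ is where Hatcher's Smale conjecture for $\mathbb{S}^3$ enters, lifted to $L(p,q)$ via the cyclic covering $\mathbb{S}^3 \to L(p,q)$.

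Second, let $\Pi$ denote the saturation of $\tilde{f}$ under homotopies relative to $\partial(D^k \times [-1,1])$. The first key claim is that $\omega(\Pi) = 2\pi^2/p = \sup_{(t,s) \in \partial(D^k \times [-1,1])} \mathcal{H}^2(\tilde{f}(t,s))$. The inequality $\omega(\Pi) \geq 2\pi^2/p$ is automatic. If it were strict, Theorem \ref{minmax} and the genus bound \eqref{genusbounds} would produce a minimal limit $\sum n_i \Gamma_i$ of area exceeding $2\pi^2/p$ whose components are tori, spheres, Klein bottles, or projective planes. Brendle's theorem (Theorem \ref{classification}) forces any torus component to be a Clifford torus of area $2\pi^2/p$; Theorem \ref{lens} then rules out configurations with more total area; and for $\mathbb{RP}^3$ and $L(4p, 2p \pm 1)$ the Wang-Zhou multiplicity one theorem forbids the problematic even multiplicities on $\mathbb{RP}^2$ and Klein bottles, respectively. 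Thus $\omega(\Pi) = 2\pi^2/p$ and the width is attained on the boundary of the parameter space.

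Third, I apply a Lusternick-Schnirelman argument in the spirit of Marques-Neves to a nearly-optimal sweepout $\tilde{f}' \in \Pi$. Since the width is attained on the boundary, one can extract a relative $k$-cycle $\Sigma^k \subset D^k \times [-1,1]$ with $\partial \Sigma^k = \partial D^k \times \{0\}$ and a continuous map $g: \Sigma^k \to \mathcal{T}$ homologous to $f$ whose image lies within a prescribed varifold $\varepsilon$-neighborhood of $\mathcal{T}_{min}$. The idea is to delete the portion of $D^k \times [-1,1]$ on which $\tilde{f}'$ is either far below the width in area or far from $\mathcal{T}_{min}$ as a varifold; what remains carries the relative fundamental class.

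Fourth, I retract $g$ into $\mathcal{T}_{min}$. A torus varifold-close to a Clifford torus $T_0$ decomposes as a normal graph over most of $T_0$ together with small protruding ``hair'' disks. In a tubular neighborhood $T_0 \times (-\delta, \delta)$ one parametrically pushes the hair back into a graph using the Hatcher-Ivanov arguments for $T^2 \times I$, and then isotopes the resulting graph onto $T_0$. The Goeritz group computation (Proposition \ref{goeritz}) controls parameter consistency: the group is trivial away from $\mathbb{RP}^3$, while for $\mathbb{RP}^3$ the involution $\tau$ must be tracked explicitly. I expect this last step to be the main obstacle: converting coarse varifold-closeness into a genuinely \emph{parametric} isotopy-through-embeddings retraction is the crux, since the hair-removal must be performed continuously in the parameter $x \in \Sigma^k$ and respect the handlebody-flipping ambiguities. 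The first three steps are relatively standard given existing min-max technology together with Brendle and Wang-Zhou, but the fourth requires a novel packaging of Hatcher-Ivanov ideas in the non-Haken setting.
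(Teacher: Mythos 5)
Your proposal follows the paper's argument essentially step for step: extend $a$ to a $(k+1)$-parameter sweepout, show the min-max width equals $2\pi^2/p$ via Brendle's classification and Wang--Zhou multiplicity one, extract a relative cycle near $\overline{\mathcal{T}}_{min}$ by the Lusternik--Schnirelman argument, and retract parametrically via Hatcher--Ivanov. One minor imprecision in Step 1: the paper builds the extension to $D^k\times[-1,1]$ by first using the homotopy inverse $q$ of $p_2$ to lift $f$ to a family of diffeomorphisms $b:I^k\to\mbox{diff}(M)$ with $b(\partial I^k)\subset\mbox{isom}(M)$ and then pushing forward the fixed equidistant foliation $\{f_t\}$ by $b(x)$; this is where Hatcher's theorem enters (through contractibility of $\mbox{Diff}_p$ of handlebodies making $p_2$ a homotopy equivalence), rather than via ``consistent spines lifted through the cyclic cover'' as you phrase it.
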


 Assuming Claim \ref{mainclaim},  since $H'$ is a bijection,  and the orbit $[e]\in\pi_k(\mathcal{T},\mathcal{T}_{min})/\pi_1(\mathcal{T}_{min})$ containing the identity $e$ maps under $H'$ to the trivial element,  it follows that the orbit $[e]$ consists of all elements in $\pi_k(\mathcal{T},\mathcal{T}_{min})$.   Thus any element in $\pi_k(\mathcal{T},\mathcal{T}_{min})$ can be realized by acting an element of $\pi_1(\mathcal{T}_{min})$ on $e$.   It follows that $\pi_k(\mathcal{T},\mathcal{T}_{min})$ is trivial.    This completes the inductive step.

We will also show the base case of the induction (which is a stronger form of Claim \ref{mainclaim}):
\begin{claim}\label{mainclaim1}
$\pi_1(\mathcal{T},\mathcal{T}_{min})$ is trivial.  
\end{claim}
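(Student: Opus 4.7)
The plan is to execute the scheme sketched in the introduction in the base case $k=1$, exploiting the one-dimensionality to produce a direct null-homotopy rather than only a null-homology. Represent an element $[a] \in \pi_1(\mathcal{T}, \mathcal{T}_{min})$ by a path $f \colon [-1,1] \to \mathcal{T}$ with $f(\pm 1) \in \mathcal{T}_{min}$. Using the Smale conjecture for $\mathbb{S}^3$ (Theorem \ref{hatcher}) to continuously parameterize genus-$1$ sweepouts over the space of Heegaard tori, I would first extend $f$ to a two-parameter family $\tilde{f} \colon [-1,1]^2 \to \{\text{closed subsets of } M\}$ such that, for each fixed $s$, the slice $\tilde{f}(s, \cdot)$ is a genus-$1$ Heegaard sweepout of $M$ with $\tilde{f}(s, 0) = f(s)$ and $\tilde{f}(s, \pm 1)$ the spines of the two handlebodies that $f(s)$ bounds.

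Next, let $\Pi$ be the saturation of $\tilde{f}$ relative to $\partial I^2$. Applying Theorem \ref{lens} to a single vertical slice yields $\omega(\Pi) \geq 2\pi^2/p$, and the boundary slices at $s = \pm 1$ already attain this value since $f(\pm 1) \in \mathcal{T}_{min}$. The critical claim is the reverse inequality $\omega(\Pi) \leq 2\pi^2/p$, argued by contradiction: strict inequality together with Theorem \ref{minmax} would furnish a varifold min-max limit $\sum n_i \Gamma_i$ of total area strictly greater than $2\pi^2/p$, constrained by the genus bound \eqref{genusbounds} with $g = 1$. Combining Theorem \ref{classification} (which forces any embedded minimal torus to be a Clifford torus of area $2\pi^2/p$) with the Wang-Zhou Multiplicity One theorem (used in the cases $M = \mathbb{RP}^3$ and $M = L(4p, 2p \pm 1)$ to exclude minimal projective planes of even multiplicity and Klein bottles of multiplicity two) leaves only a single Clifford torus with $n_i = 1$ as a possibility, contradicting total area strictly greater than $2\pi^2/p$.

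With $\omega(\Pi) = 2\pi^2/p$ in hand, the Lusternik-Schnirelman argument of Section \ref{section_ls}, modeled on Marques-Neves' proof of the Willmore conjecture, produces, inside the open region $\Omega = [-1,1] \times (-1, 1)$ on which $\tilde{f}$ takes torus values, a path $\Sigma^1$ from $(-1, 0)$ to $(1, 0)$ whose image $g = \tilde{f}|_{\Sigma^1}$ consists of embedded tori that are weakly close as varifolds to $\mathcal{T}_{min}$. Since $\Omega$ is simply connected, $\Sigma^1$ is homotopic in $\Omega$ to $[-1,1] \times \{0\}$ rel endpoints, so composition with $\tilde{f}$ gives a homotopy in $\mathcal{T}$ from $f$ to $g$, rel $\mathcal{T}_{min}$ endpoints. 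Finally, Sections \ref{section_hair} and \ref{section_finalretraction} deform $g$ into $\mathcal{T}_{min}$ rel endpoints: one first retracts the small-area ``hair'' of each embedding into a tubular neighborhood of the nearby minimal torus, and then retracts within this neighborhood onto the core torus using the Hatcher-Ivanov contractibility results for the Haken manifold $T^2 \times [-1,1]$, keeping track in the $\mathbb{RP}^3$ case of the Goeritz generator $\tau$ via Proposition \ref{goeritz}. Concatenating this retraction with the previous homotopy yields the desired null-homotopy of $f$.

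I expect the hardest step to be the width identity $\omega(\Pi) = 2\pi^2/p$, since it is where the Lawson conjecture, the Multiplicity One theorem, and the genus bounds must together rule out every non-Clifford candidate min-max limit of higher total area. The Lusternik-Schnirelman extraction, the hair retraction, and the final retraction all follow the Haken-case blueprint of Hatcher and Ivanov, but require careful bookkeeping of parameterizations so that the deformations remain genuine homotopies of maps into $\mathcal{T}$ rather than of unparameterized tori.
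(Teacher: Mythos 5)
Your proposal follows the paper's own argument closely: extend $f$ to a two-parameter sweepout using the optimal foliation and the fibration $p_2$, show $\omega(\Pi) = 2\pi^2/p$ via the min-max theorem together with Brendle's classification and Wang--Zhou multiplicity one, extract a path close in $\mathbf{F}$-distance to $\overline{\mathcal{T}}_{min}$ by the Lusternik--Schnirelman argument, and then retract via the hair-contraction and collar arguments of Sections~\ref{section_hair}--\ref{section_finalretraction}. The overall structure and ingredients coincide with the paper's proof (Theorems~\ref{k=1case}, \ref{hair_1}, \ref{finalretract}(2)).

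There is one imprecision worth flagging. You write that the Lusternik--Schnirelman step produces a path $\Sigma^1$ in the parameter square on which $g = \tilde{f}|_{\Sigma^1}$ is weakly close to $\mathcal{T}_{min}$, and then homotope $\Sigma^1$ to $[-1,1]\times\{0\}$ and compose with $\tilde{f}$. This is not quite what the argument delivers: the Lusternik--Schnirelman extraction operates on a \emph{minimizing} sequence of sweepouts $\Phi_i\in\Pi$, for which all surfaces have area at most $2\pi^2/p+\epsilon_i$; the original extension $\tilde{f}$ may have slices of arbitrarily large area in the interior of the parameter square, so there is in general no path across the square where $\tilde{f}$ itself is close to $\overline{\mathcal{T}}_{min}$. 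The correct statement is that Proposition~\ref{lsargument} (and Theorem~\ref{k=1case}) give a path $\gamma$ on which $\Phi_i\circ\gamma$ is $\mathbf{F}$-close to $\overline{\mathcal{T}}_{min}$; one then needs the additional observation that $\Phi_i|_{I\times\{0\}}$ is homotopic rel $\partial I$ to $\Phi$ (and hence to the original $f$) because $\Phi_i$ is obtained from $\tilde{\Phi}$ by applying diffeomorphisms from a map contractible rel $\partial I^{k+1}$. This is exactly the point the paper makes in the passage ``By construction each family $\{\Phi_i(x,0)\}_{x\in I^k}$ is homotopic (rel $\partial I^k$) to the initial family $\{\Phi(x,0)\}_{x\in I^k}$.'' With that substitution your sketch is correct and matches the paper.
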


Let us show Claim \ref{mainclaim} (henceforth we will assume $k\geq 1$,  and keep track to show that when $k=1$ the stronger result Claim \ref{mainclaim1} holds).    The homotopy class $a\in \pi_k(\mathcal{T},\mathcal{T}_{min})$ is represented by a sweepout \begin{equation}\Phi': I^k\to\mathcal{T}\end{equation}   
with
\begin{equation}
\Phi'|_{\partial I^k} \subset\mathcal{T}_{min}.
\end{equation}
Let us assume $a$ is based at $f'_0\in\mathcal{T}_{min}$ so that (writing $I^k=I^{k-1}\times I$)
\begin{equation}
\Phi'_{(\partial I^{k-1}\times I) \cup I^k\times\{-1\}}=f_0'.
\end{equation}
Recall
\begin{equation}
p_2: (\mbox{diff}(M), \mbox{isom}(M))\to (\mathcal{T},\mathcal{T}_{min}), 
\end{equation}
is a homotopy equivalence of pairs.  

By Theorem \ref{basichomotopy} there exists $q:(\mathcal{T},\mathcal{T}_{min})\to (\mbox{diff},\mbox{isom})$,  a homotopy inverse of pairs to $p_2$.   Thus there is a homotopy $P_t:\mathcal{T}\to\mathcal{T}$ with $P_0=\mbox{id}$,  $P_1=p_2\circ q$ and satisfying $P_t(\mathcal{T}_{min})\subset \mathcal{T}_{min}$ for all $t\in [0,1]$.   

Set $\Phi = p_2\circ q\circ\Phi'$.   Then $\Phi\in \pi_k(\mathcal{T},\mathcal{T}_{min})$ which is now based at $f_0=p_2\circ q(f'_0)\in\mathcal{T}_{min}$.    If $H(\Phi)$ is trivial,  since $\{P_t(\Phi)\}_{t\in [0,1]}$ gives a homotopy between $\Phi$ and $\Phi'$, that induces isomorphisms on homology,  it follows that $H(\Phi')$ is also trivial.    Thus it suffices to show the triviality of $H(\Phi)$ to prove Claim \ref{mainclaim}.

The family of diffeomorphisms $b=q(\Phi')$ (which projects to $\Phi$ under $p_2$) permits us to extend the sweepout $\Phi:I^k\to\mathcal{T}$ to a $(k+1)$-parameter sweepout of $M$ in the following way. 

Let $\{f_t\}_{t\in (-1,1)}$ be a family of embeddings $f_t:T^2\to M$ defined as follows.  Fix a choice of normal vector $\mathbf{n}$ on $f_0(T^2)$.   For each $t\in [-1,1]$,  define the map
\begin{equation}
g_t:T^2\to M
\end{equation}
obtained by $g_t(x) = \exp_x(\frac{\pi t\mathbf{n}(x)}{4})$.    Then set $f_t = g_t\circ f_0$.   As $t\to\pm 1$, the image of $f_t$ converges to an unknotted circle (the spines of the respective handlebodies)\footnote{The distance between a Clifford torus in $\mathbb{S}^3$ and either of its core geodesics is $\pi/4$, which descends to the lens spaces.}.  Set $\Sigma_t = f_t(T^2)$.  Then $\{\Sigma_t\}_{t\in [-1,1]}$ is an optimal $1$-sweepout of the lens space $M$ by Theorem \ref{lens}.  In fact, $\{\Sigma_t\}_{t\in [-1,1]}$ is an optimal foliation, meaning that in addition to being an optimal sweepout, $\{\Sigma_t\}_{t\in (-1,1)}$ foliates $M\setminus (\Sigma_{-1}\cup\Sigma_1)$.

Recall 
\begin{equation}
b:I^k\to \mbox{diff}(M), 
\end{equation}
and let us define the $(k+1)$-parameter sweepout of $M$ by genus $1$ surfaces (parameterized by $I^{k+1}=I^k\times[-1,1]$) by first defining the family of embeddings
\begin{equation}
\Phi(x,t):=b(x)\circ f_t.
\end{equation}
Let us denote the correspoding $(k+1)$-sweepout
\begin{equation}
\tilde{\Phi}(x,t) = \Phi(x,t)(T^2)
\end{equation}

The sweepout $\{\tilde{\Phi}(x,t)\}_{(x,t)\in I^{k+1}}$ extending $\Phi$ satisfies the following properties:
\begin{enumerate}
\item $\tilde{\Phi}(x,0)=\Phi(x)(T^2)$ for each $x\in I^k$.
\item $\{\tilde{\Phi}(x,t)\}_{t\in [-1,1]}$ is a $1$-sweepout of $M$ for each $x\in I^k$ and an optimal foliation for $x\in\partial I^k$.
\item $\tilde{\Phi}(x,0)$ is the image of an element in $\mathcal{T}_{min}$ for all $x\in\partial I^k$.  
\item $\tilde{\Phi}(x,-1)\mbox{ and } \tilde{\Phi}(x,1)$ consist of embedded circles for each $x\in I^k$.  
\end{enumerate}
Let $\Pi=\Pi(\tilde{\Phi})$ denote the saturation of the family $\tilde{\Phi}$ and the corresponding min-max value
\begin{equation}
\omega(a):= \inf_{\Psi\in\Pi} \sup_{(x,t)\in I^k\times[-1,1]}\mathcal{H}^2(\Psi(x,t)). 
\end{equation}

\noindent
By item (1) we get that 
\begin{equation}
\frac{2\pi^2}{p}=\sup_{(x,t)\in \partial (I^k\times [-1,1])}\mathcal{H}^2(\tilde{\Phi}(x,t)).
\end{equation}
Thus if
 \begin{equation}\label{width}
\omega(a)>\frac{2\pi^2}{p}\end{equation} 
it follows from the Min-Max theorem \ref{minmax} that there exists a minimizing sequence of sweepouts $\Phi_i\in\Pi$ and corresponding min-max sequence $\Phi_i(x_i,t_i)$ so that 
\begin{equation}
\Phi_i(x_i,t_i)\to k\Gamma\mbox{ as varifolds}
\end{equation}
where $k$ is a positive integer and $\Gamma$ is a connected\footnote{This follows by Frankel's theorem \cite{F} since $M$ has positive Ricci curvature.},  embedded minimal surface.  Furthermore,  we have
\begin{equation}
\omega(a)=k\mathcal{H}^2(\Gamma).
\end{equation}

By the second part of Theorem \ref{minmax}, there exists a two-sided orientable minimal surface $\Gamma'\subset M$ with genus at most $1$ satisfying
\begin{equation}
\omega(a) = \mathcal{H}^2(\Gamma').
\end{equation}

Note that $\Gamma'$ is not a minimal two-sphere (as otherwise, lifting to $\mathbb{S}^3$ we obtain disjoint minimal two-spheres, which do not exist).  Thus by Theorem \ref{classification}, $\Gamma'$ is a Clifford torus.  This violates \eqref{width}.\footnote{Note that Wang-Zhou's Multiplicity $1$ result is needed in those lens spaces which admit embedded Klein bottles or projective planes ($L(4p,2p-1)$ for $p\geq1$ and $\mathbb{RP}^3$, respectively), as in the other lens spaces the genus bound \eqref{genusbounds} is sufficient to arrive at the same contradiction.}


Since we have reached a contradiction assuming \eqref{width}, we get
\begin{equation}
\omega(a) = \frac{2\pi^2}{p}.
\end{equation}

We conclude \emph{a posteriori} from Theorem \ref{classification} and the genus bounds \eqref{genusbounds} that $k=1$ and $\Gamma$ is a Clifford torus. 

Choose a minimizing sequence $\Phi_i\in\Pi$ so that for some sequence $\epsilon_i\to 0$ there holds
\begin{equation}\label{ls}
\frac{2\pi^2}{p}\leq \sup_{(x,t)\in (I^k\times[-1,1])} \mathcal{H}^2(\Phi_i(x,t)) \leq \frac{2\pi^2}{p}+\epsilon_i.  
\end{equation}
Since the sweepout $\{\Phi_i(x,t)\}_{(x,t)\in I^k\times I}\in \Pi$ arose from applying suitable diffeomorphisms (coming from the saturation of $\Pi$) to $\{\Phi(x,t)(T^2)\}_{(x,t)\in I^k\times I}$, by slight abuse of notation, we will consider $\Phi_i(x,t)$ as a family of parameterized maps from $T^2$ into $M$ rather than their image surfaces (or circles).  With this understanding, for each $i$
\begin{equation}
\Phi_i|_{I^k\times \{0\}}\subset\mathcal{T}.
\end{equation}

By construction each family $\{\Phi_i(x,0)\}_{x\in I^k}$ is homotopic (rel $\partial I^k$) to the initial family $\{\Phi(x,0)\}_{x\in I^k}= \{\Phi(x)\}_{x\in I^k}$.  

If $\alpha$ is equal to the $k$-chain $I^k\times\{0\}$ then  \begin{equation}
[\alpha]\in H_k(I^k\times [-1,1], \partial I^k\times\{0\})\cong\mathbb{Z}
\end{equation}   
represents a generator. Since (relative) homotopic maps induce homologous relative cycles (if we denote by $(\Phi_i)_*$ the pushforward of $\Phi_i$ in homology):
\begin{equation}
[\Phi_*(\alpha)]=[(\Phi_i)_*(\alpha)]\in H_k(\mathcal{T}, \mathcal{T}_{min}).
\end{equation}
Note that $H(a) = [(\Phi_i)_*(\alpha)]$.   To show Claim \ref{mainclaim},  it remains to demonstrate:
\begin{thm}\label{equality}
$[(\Phi_i)_*(\alpha)] = [0] \mbox{ in } H_k(\mathcal{T},\mathcal{T}_{min})\mbox{ for large } i.$
\end{thm}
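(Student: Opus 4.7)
The strategy is to follow the program outlined in the introduction: first produce a $k$-cycle homologous to $\alpha$ on which $\Phi_i$ takes values in an arbitrarily small varifold neighborhood of $\mathcal{T}_{min}$ (this is the Lusternik-Schnirelman step of Section \ref{section_ls}), then exhibit a relative null-homology of $\Phi_i$ restricted to that cycle by first removing ``hair'' (Section \ref{section_hair}) and then projecting graphical tori onto their base Clifford tori (Section \ref{section_finalretraction}).

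For the first step, I would fix $\delta>0$ and consider the open set
\[
U_i(\delta) = \{(x,t)\in I^{k+1} : \mathrm{dist}_{\mathrm{VF}}(\Phi_i(x,t),\mathcal{T}_{min}) < \delta\},
\]
where $\mathrm{dist}_{\mathrm{VF}}$ denotes varifold $\F$-distance. A Lusternik-Schnirelmann argument in the spirit of Marques-Neves would show that if the relative homology class $[\alpha]\in H_k(I^{k+1},\partial I^k\times\{0\})$ \emph{could not} be represented by a cycle supported in $U_i(\delta)$, then a pull-tight deformation concentrated on $I^{k+1}\setminus U_i(\delta)$ (which leaves $\partial I^{k+1}$ fixed, since the boundary values already lie in or near $\mathcal{T}_{min}$) would strictly decrease the supremum of area below $\omega(a)+\epsilon_i/2$, contradicting \eqref{ls}. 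Hence for any fixed $\delta>0$ and all sufficiently large $i$, there exists a relative $k$-cycle $\Sigma_i\subset U_i(\delta)$ with $[\Sigma_i]=[\alpha]$, and $\Phi_i|_{\Sigma_i}$ is a relative $k$-cycle in $(\mathcal{T},\mathcal{T}_{min})$ representing $(\Phi_i)_*(\alpha)$ with image in a $\delta$-neighborhood of $\mathcal{T}_{min}$.

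To null-homologize this cycle, fix a small tubular neighborhood $\mathcal{N}$ of the (compact) set of minimal Clifford tori in $M$. Choosing $\delta$ sufficiently small, $\epsilon$-regularity and the monotonicity formula show that for each $(x,t)\in\Sigma_i$ the surface $\Phi_i(x,t)$ decomposes as a graph over some $T\in\mathcal{T}_{min}$ \emph{away} from a controlled collection of small-area disks and necks (``hair'') protruding outside $\mathcal{N}$. A parametric version of the Hatcher-Ivanov arguments for the Haken manifold $T^2\times[-1,1]$, combined with Theorem \ref{first}, provides ambient isotopies depending continuously on $(x,t)$ that retract this hair inward. Once the family lies in the space of tori graphical over $\mathcal{T}_{min}$, the final retraction uses the normal exponential map of the nearest Clifford torus; compatibility of the resulting parameterizations is ensured by Proposition \ref{goeritz} (for $M\neq\mathbb{RP}^3$ the Goeritz group is trivial, while for $\mathbb{RP}^3$ the single nontrivial involution $\tau$ acts by an element in the identity component of $\mathrm{Diff}(T^2)$ and is absorbed by the homotopy). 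This gives a null-homology of $\Phi_i|_{\Sigma_i}$, hence $(\Phi_i)_*(\alpha)=(\Phi_i)_*[\Sigma_i]=0$ in $H_k(\mathcal{T},\mathcal{T}_{min})$.

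The principal obstacle is the hair-removal step of Section \ref{section_hair}. Varifold closeness of $\Phi_i(x,t)$ to $\mathcal{T}_{min}$ does not \emph{a priori} force the underlying embedding to be isotopic to a graph over a Clifford torus in a controlled way: thin necks and small handles could appear, and the isotopies must furthermore depend continuously on $(x,t)\in\Sigma_i$ while leaving the boundary values in $\mathcal{T}_{min}$ fixed. In the lens spaces that admit embedded Klein bottles or projective planes (i.e.\ $L(4p,2p-1)$ for $p\ge1$ and $\mathbb{RP}^3$) one must additionally rule out multiplicity-$2$ limits of one-sided minimal surfaces, which is precisely where the Wang-Zhou Multiplicity One Theorem enters to upgrade the genus bound \eqref{genusbounds}. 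In the case $k=1$, the same argument applied to a disk rather than a cycle produces a null-homotopy directly, establishing the stronger Claim \ref{mainclaim1}.
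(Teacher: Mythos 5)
Your high-level program — a Lusternik--Schnirelman step to find a homologous cycle supported close to $\mathcal{T}_{min}$ in the $\mathbf{F}$-metric, followed by a hair-removal step and a final retraction — is exactly the one the paper follows (Proposition~\ref{lsargument}, Theorem~\ref{hausdorff}, Theorem~\ref{finalretract1}). However, two of your steps are not correct as stated.

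First, your final step asserts that once hair has been removed the tori are \emph{graphical} over a Clifford torus, so that the normal exponential map (nearest-point projection) completes the retraction. That is not what hair removal gives: after Theorem~\ref{hausdorff} each torus $\Psi_i(b)$ is merely \emph{contained} in the tubular neighborhood $N_r(g_i(b))\cong T^2\times[-r,r]$, and it can still fold and be non-graphical there. The nearest-point projection applied to such a torus need not be a diffeomorphism, so it does not produce a retraction. This is precisely why the paper's Section~\ref{section_finalretraction} introduces the collar space $C_{free}$, proves the fibration $p:C_{free}\to\mbox{Emb}_0(T^2,N_r(C_q))$ has contractible fibers via Hatcher's lemma that $\mbox{Diff}_{free}(N_{-r,0}(C_q))$ is contractible (which itself uses Hatcher's theorem for $\mathbb{S}^3$), and then slides tori along collars to reach $\mathcal{T}_{min}(q)$. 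Your sketch needs this argument, or an equivalent, rather than exponential-map projection.

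Second, your claim that for $\mathbb{RP}^3$ ``the single nontrivial involution $\tau$ acts by an element in the identity component of $\mathrm{Diff}(T^2)$'' is wrong: as noted after Proposition~\ref{goeritz}, $\tau$ takes the identity component of $\mbox{Diff}(T^2)$ to the component corresponding to $-I\in\mbox{GL}(2,\mathbb{Z})$, which is \emph{not} the identity component. This is precisely why $\mathcal{T}_{min}(q)$ is disconnected (two tori) for $\mathbb{RP}^3$, and why Proposition~\ref{retract} must check that \emph{exactly one} of two candidate isometries $I_1,I_2\in\mbox{isom}^+(M,q)$ makes $f_0^{-1}\circ I_j^{-1}\circ\tilde E$ land in $\mbox{diff}(T^2)$. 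If $\tau$ really acted by an identity-component element, $G_0(\mathbb{RP}^3,\Sigma)$ would be trivial and the whole discussion would be unnecessary; the reason Bonahon's computation is cited at all is that $\tau$ does \emph{not}.

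A minor additional remark: your hair-removal sketch invokes $\epsilon$-regularity and monotonicity to claim the surface decomposes as a graph plus controlled hair. The paper instead uses the coarea formula to choose a transverse slice $\partial N_{s}(\Sigma)$ with short intersection curves, neck-pinching and degree considerations to show those curves bound small disks, Alexander's theorem to produce the isotopies cell-by-cell, and Hatcher's contractibility of disk spaces (\cite{H3}) for the parametric compatibility. Varifold closeness alone does not give graphicality, and the parametric coherence of the isotopies is the genuinely delicate point, which your sketch acknowledges as an obstacle but does not resolve.
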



The proof of Theorem \ref{equality} will proceed in three steps.   First, we show using a Lusternik-Schnirelman argument that we may find a representative of the homology class $[H(a)]$ with all corresponding surfaces close in the sense of varifolds to the space of Clifford tori.  In the second,  inspired by Hatcher's arguments we show that 
 we may retract all the ``filigree" or ``hair" of this family so that each torus is contained in some small tubular neighborhood $T^2\times[-r,r]$ around a minimal torus.  In the third,  we use Hatcher-Ivanov's work on Haken manifolds (applied to the tubular neighborhood $T^2\times[-r,r]$) to retract these surfaces to the space of minimal tori.

\section{Lusternik-Schnirelman argument}\label{section_ls}
Following \cite{MN} we let $\mathbf{F}$ denote the metric on the space of integral $2$-currents in $M$ obtained as the sum of the flat metric and $\mathbf{F}$-metric for induced varifolds.
\begin{equation} \label{def of F}
     \F(V,W) = \mathcal{F}(V-W) + \F(|V|,|W|)
\end{equation}
Recall that $\overline{\mathcal{T}}_{min}$ denotes the set of embedded,  oriented minimal tori in $M$.

In the setting of the previous section,  we show:
\begin{prop}[Retraction to $\mathbf{F}$-metric tubular neighbohood]\label{lsargument}
Fix $\epsilon>0$.  For $i$ large enough there exists a relative cubical $k$-cycle $\alpha_i$ with  \begin{equation}[\alpha]=[\alpha_i]\in H_k(I^k\times [-1,1], \partial I^k\times\{0\})\cong\mathbb{Z}\end{equation} with $\partial\alpha_i = \partial I^k\times\{0\}$ so that pushing $[\alpha_i]$ forward via $\Phi_i$:
\begin{equation}
\Phi_i: I^k\times[-1,1]\to\mathcal{T},
\end{equation}
there holds
\begin{equation}
[H(a)]=[(\Phi_i)_*(\alpha_i)]\in H_k(\mathcal{T},\mathcal{T}_{min})
\end{equation}
Furthermore, for any $(x,t)\in supp(\alpha_i)$ there holds
\begin{equation}
\mathbf{F}(\Phi_i(x,t), \overline{\mathcal{T}}_{min})\leq \epsilon.
\end{equation}
\end{prop}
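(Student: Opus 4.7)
The plan is to combine the pull-tight mechanism for min-max sequences with a Lusternik-Schnirelman slicing argument to extract the desired cycle from the high-area region of the $(k+1)$-parameter family. Because Section 4 has identified the min-max critical set for $\Pi$ with the set of (oriented) Clifford tori $\overline{\mathcal{T}}_{min}$, the pull-tight procedure of Marques-Neves, applied to $\Phi_i$ without altering its class in $\Pi$, delivers the key concentration: given $\epsilon > 0$, there exist $\delta > 0$ and $N$ so that for all $i \geq N$ and all $(x,t)$ with $\mathcal{H}^2(\Phi_i(x,t)) > \omega(a) - \delta$, one has $\mathbf{F}(\Phi_i(x,t), \overline{\mathcal{T}}_{min}) \leq \epsilon$. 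Thus it suffices to exhibit, for large $i$, a relative cubical $k$-cycle $\alpha_i$ supported in the open set
\[ A_i := \{(x,t) \in I^{k+1} : \mathcal{H}^2(\Phi_i(x,t)) > \omega(a) - \delta\} \]
with $\partial \alpha_i = \partial I^k \times \{0\}$ and $[\alpha_i] = [\alpha]$ in $H_k(I^{k+1}, \partial I^k \times \{0\})$.

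The vertical fibers of $A_i$ are non-empty. For each $x \in I^k$, the 1-parameter slice $t \mapsto \Phi_i(x,t)$ is a genus-$1$ sweepout of $M$ homotopic to the optimal foliation $\{f_t(T^2)\}_{t \in [-1,1]}$, so by Theorem \ref{lens} its maximum area is at least $2\pi^2/p = \omega(a)$; hence $A_i \cap (\{x\} \times [-1,1]) \neq \emptyset$. Near $\partial I^k$ the sweepout $\Phi_i$ still agrees with $\Phi$, whose vertical slice attains its unique maximum at $t=0$, so $A_i$ already contains a neighborhood of $\partial I^k \times \{0\}$.

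Next I construct $\alpha_i$ by patched cubical slicing. Fix a cubical subdivision $\{c_j\}$ of $I^k$ with side-length $\eta$. For each $c_j$, openness of $A_i$ and the previous step let me pick a height $t_j \in [-1,1]$ with $c_j \times \{t_j\} \subset A_i$; for cubes touching $\partial I^k$ I take $t_j = 0$. Assemble the \emph{horizontal} $k$-chain $H = \sum_j c_j \times \{t_j\}$, and close it into a cycle by inserting, along each shared $(k-1)$-face $f$ between adjacent cubes $c_j, c_{j'}$, the \emph{vertical filler} $f \times [t_j, t_{j'}]$ with orientations chosen to cancel $\partial H$. For $\eta$ sufficiently small, uniform continuity of $(x,t) \mapsto \mathcal{H}^2(\Phi_i(x,t))$ together with the fiber-direction width of $A_i$ force $|t_j - t_{j'}|$ to be small on adjacent cubes and keep each vertical filler inside $A_i$. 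The resulting cubical $k$-cycle $\alpha_i$ lies in $A_i$, has $\partial \alpha_i = \partial I^k \times \{0\}$, and since its intersection number with a generic vertical fiber $\{x\} \times (-1,1)$ is $+1$, it represents $[\alpha]$. Pushing forward, $(\Phi_i)_*[\alpha_i] = (\Phi_i)_*[\alpha] = \Phi_*[\alpha] = H(a)$ in $H_k(\mathcal{T}, \mathcal{T}_{min})$, using the homotopy from $\Phi$ to $\Phi_i$ inside the saturation $\Pi$.

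The principal obstacle is the cubical construction above. Fiber-wise, the set $A_i \cap (\{x\} \times [-1,1])$ may be disconnected and vary discontinuously with $x$, so $\pi : A_i \to I^k$ need not admit a continuous section. The cubical subdivision circumvents this by permitting piecewise-constant selections paid for by vertical fillers; the delicate point is coordinating the fineness $\eta$ with the modulus of continuity of the area function and with the fiber-direction width of $A_i$, so that all vertical fillers remain inside $A_i$. This coordinated choice is the heart of the Lusternik-Schnirelman slicing in the present setting.
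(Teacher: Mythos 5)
Your argument differs substantially from the paper's and has two genuine gaps, one of which is fatal.

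\textbf{The cubical construction fails.} You acknowledge that the fiber $A_i \cap (\{x\}\times[-1,1])$ can be disconnected, but then assert that ``uniform continuity of $(x,t)\mapsto\mathcal{H}^2(\Phi_i(x,t))$ together with the fiber-direction width of $A_i$ force $|t_j-t_{j'}|$ to be small on adjacent cubes.'' This does not follow. If the fiber over a point $y$ in the shared face $f$ is a disjoint union $[a_1,b_1]\cup[a_2,b_2]$ with a wide gap, nothing prevents the selection rule from picking $t_j\in[a_1,b_1]$ on one cube and $t_{j'}\in[a_2,b_2]$ on the adjacent one; the filler $f\times[t_j,t_{j'}]$ then crosses the gap and exits $A_i$. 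Uniform continuity of the area only controls how the \emph{value} of $\mathcal{H}^2(\Phi_i(\cdot,\cdot))$ varies between nearby parameters; it does not tie the selected heights on adjacent cubes to the same component of the fiber, nor give any bound on the fiber-direction width. This is precisely why the paper does \emph{not} build the cycle by choosing a horizontal slice in each cube. Instead, the paper shows that the closed set $U^i_\delta=\{\mathbf{F}(\Phi_i(\cdot),\overline{\mathcal{T}}_{min})\le\delta\}$ separates the top and bottom faces of $I^{k+1}$ once $i$ is large (proved by contradiction: a crossing path would yield a $1$-parameter minimizing sequence avoiding $\overline{\mathcal{T}}_{min}$, contradicting Theorem~\ref{lens}), lets $B^i_\delta$ be the bottom component of the complement, cubulates finely, and takes $\alpha_i$ to be essentially $\partial C^i_\delta$ where $C^i_\delta$ is the union of cubes meeting $B^i_\delta$. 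That boundary is automatically a relative cycle in the right homology class, and its cells sit on the interface with $U^i_\delta$, hence are automatically $\mathbf{F}$-close to $\overline{\mathcal{T}}_{min}$; no ``coordination of fineness with fiber-width'' is needed because one never chooses a height selector at all.

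\textbf{The pull-tight step is overstated.} You invoke Marques--Neves pull-tight to conclude that for $i$ large every slice with $\mathcal{H}^2(\Phi_i(x,t))>\omega(a)-\delta$ satisfies $\mathbf{F}(\Phi_i(x,t),\overline{\mathcal{T}}_{min})\le\epsilon$. Pull-tight gives $\mathbf{F}$-proximity to the \emph{critical set}, which a priori is a set of stationary varifolds with mass $\omega(a)$, not the set of smooth Clifford tori with multiplicity one; reducing the critical set to $\overline{\mathcal{T}}_{min}$ is a nontrivial additional step (in the Willmore paper, Marques--Neves devote considerable effort to characterizing the critical set). The paper sidesteps this entirely: by passing to a $1$-parameter slice (the crossing path) it invokes only the classification of the $1$-parameter width realizer in Theorem~\ref{lens}, where multiplicity-one convergence to a Clifford torus, and hence $\mathbf{F}$-convergence by \cite[Proposition A.1]{de2013existence}, is already established. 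Your approach would need to reproduce a classification of the full $(k+1)$-parameter critical set, which you do not supply.

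In short, the correct mechanism here is a topological separation argument followed by taking a boundary, not a fiber-wise height selection patched by vertical fillers; and the $\mathbf{F}$-proximity should be obtained from the $1$-parameter min-max classification along a putative crossing path, not by assuming a concentration property that pull-tight does not by itself provide.
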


The proof of Theorem \ref{lsargument} is analogous to Theorem 9.1 in Marques-Neves' proof of the Willmore conjecture \cite{MN}.   We follow their notation (cf. Section 7 in \cite{MN}) which we introduce for the reader's convenience.

Let us denote $I^k=[-1,1]^k\subset \mathbb{R}^k$.   For each $j\in\mathbb{N}$,  $I(1,j)$ denotes the cell complex on $I^1$ whose $1$-cells are $[-1,-1+2^{-j}], [-1+2^{-j},-1+2\cdot 2^{-j}],...[1-2^{-j},1]$ and whose zero cells are $[-1],[-1+2^{-j}],...,[1]$.   Set $\mbox{dim}(\beta)=1$ if $\beta$ is a $1$-cell, and $0$ if $\beta$ is a $0$-cell (vertex).  
We denote by $I(k,j)$ the $k$-dimensional cell complex on $I^k$ given by 
\begin{equation}
I(k,j) = I(1,j)\otimes ...\otimes I(1,j)\mbox{ ($k$ times).}
\end{equation}
For each $0\leq p\leq k$ we say \begin{equation}\alpha=\alpha_1\otimes\alpha_2....\otimes \alpha_k\end{equation} is a \emph{$p$-cell} if and only if $\alpha_i$ is a cell of $I(1,j)$ for each $i$ and $\sum_{i=1}^k\mbox{dim}(\alpha_i) = p.$  For each $0\leq p\leq n$ let $I(k,i)_p$ denote the subcollection of $p$-cells.

The boundary homomorphism $\partial: I(k,j)\to I(k,j)$ is given by 
\begin{equation}\label{boundarymap}
\partial(\alpha_1\otimes...\otimes\alpha_k) =\sum_{i=1}^k (-1)^{\sigma(i)}\alpha_1\otimes...\otimes \partial\alpha_i\otimes...\otimes\alpha_k
\end{equation}
with 
\begin{equation}
\sigma(i) =\sum_{p<i} \mbox{dim}(\alpha_p), 
\end{equation}
and 
\begin{equation}
\partial([a,b])=[b]-[a] \mbox{ for } [a,b]\in I_1(1,j), 
\end{equation}
as well as
\begin{equation}
\partial([a]) = 0\mbox{ for } [a]\in I_0(1,j).
\end{equation}

We also need the following elementary lemma relating the distance in $\mathbf{F}$-metric to the distance in Hausdorff topology for the optimal foliation $\{\Sigma_t\}_{t\in [-1,1]}$ defined in Section \ref{section_main}. 
\begin{lemma}\label{fversushausdorff}
For each $\delta>0$ there exists $g(\delta)>0$ so that 
\begin{enumerate}
\item For any $t$, if $\mathbf{F}(\Sigma_t, \Sigma_0)\leq \delta$,  then $\mathbf{F}(\Sigma_s,\Sigma_0)\leq g(\delta)$ for all $s$ between $0$ and $t$ \label{eq1}
\item $g(\delta)\to 0$ as $\delta\to 0$.  
\end{enumerate}
\end{lemma}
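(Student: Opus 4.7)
The plan is to define $g$ by
\[
g(\delta) := \sup\bigl\{\mathbf{F}(\Sigma_s,\Sigma_0) : s\in[-1,1],\ \exists\, t\in[-1,1]\text{ with } s\text{ between }0\text{ and }t\text{ and }\mathbf{F}(\Sigma_t,\Sigma_0)\le\delta\bigr\}.
\]
With this definition property \eqref{eq1} is immediate, so the entire content of the lemma is the claim $g(\delta)\to 0$ as $\delta\to 0$. The preliminary step is to check that the map $t\mapsto\Sigma_t$ is continuous from $[-1,1]$ into the space of integral $2$-currents equipped with the $\mathbf{F}$-metric. On the open interval $(-1,1)$ this is clear, since $\Sigma_t=g_t\circ f_0(T^2)$ with $g_t$ smooth in $t$ and $g_t\circ f_0$ an embedding, which gives smooth convergence of both the currents and the induced varifolds, hence $\mathbf{F}$-convergence. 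At the degenerate endpoints $t=\pm1$ the tori $\Sigma_t$ collapse onto their core circles, but the collapse is controlled: away from the core circle the family $g_t\circ f_0$ still converges smoothly, while the area of the thin annular tube near the core goes to $0$, so both the flat part $\mathcal{F}(\Sigma_t-\Sigma_{\pm1})$ and the varifold part $\mathbf{F}(|\Sigma_t|,|\Sigma_{\pm1}|)$ tend to $0$.

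Next I would argue $g(\delta)\to 0$ by contradiction. Suppose not; then there exist $c>0$ and sequences $\delta_k\to 0$, $t_k\in[-1,1]$, $s_k$ between $0$ and $t_k$ with $\mathbf{F}(\Sigma_{t_k},\Sigma_0)\le\delta_k$ and $\mathbf{F}(\Sigma_{s_k},\Sigma_0)\ge c$. Passing to a subsequence, $t_k\to t_\ast$ and $s_k\to s_\ast$ in $[-1,1]$. Continuity of $t\mapsto\Sigma_t$ in $\mathbf{F}$ gives $\mathbf{F}(\Sigma_{t_\ast},\Sigma_0)=0$, i.e., $\Sigma_{t_\ast}=\Sigma_0$ as varifolds. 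Since $\{\Sigma_t\}_{t\in(-1,1)}$ foliates the complement of the two core circles in $M$, the supports $\Sigma_t$ are pairwise distinct for distinct $t\in(-1,1)$, and the endpoint configurations $\Sigma_{\pm1}$ (which are $1$-complexes of vanishing $2$-dimensional mass) are not equal to the Clifford torus $\Sigma_0$ as varifolds. Hence $t_\ast=0$, and since $s_k$ lies between $0$ and $t_k$ we have $|s_k|\le|t_k|\to 0$, so $s_\ast=0$. Continuity then forces $\mathbf{F}(\Sigma_{s_k},\Sigma_0)\to 0$, contradicting $\mathbf{F}(\Sigma_{s_k},\Sigma_0)\ge c$. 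This proves $g(\delta)\to 0$.

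The only mildly delicate point I anticipate is the $\mathbf{F}$-metric continuity of $t\mapsto\Sigma_t$ at the degenerate endpoints $t=\pm1$, but this reduces to the standard fact that smooth convergence implies $\mathbf{F}$-convergence combined with a direct estimate of the area of the collapsing tube around the core circle; the rest is a compactness/injectivity argument tailored to the fact that an optimal foliation separates distinct parameters.
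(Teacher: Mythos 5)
Your proof is correct and takes essentially the same route as the paper: define $g$ as the smallest function satisfying item~(1) (your $\sup$ is the same as the paper's $\inf$), then argue $g(\delta)\to 0$ by contradiction using compactness of $[-1,1]$, $\mathbf{F}$-continuity of $t\mapsto\Sigma_t$, and the injectivity of the foliation as varifolds to force $t_*=s_*=0$. The paper states the forced convergence $t_i\to 0$ a bit more tersely and does not spell out the endpoint degeneracy, but the underlying argument is identical.
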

\begin{proof}
For each $\delta>0$ set $g(\delta)$ to be the infimal $g(\delta)$ so that item (1) holds for all $t\in [-1,1]$.  Since the diameter of the family $\{\Sigma_t\}_{t\in [-1,1]}$ in the $\mathbf{F}$-metric is finite,  $g(\delta)$ is finite.   Suppose (2) fails.  Then we get a sequence of $\delta_i\to 0$ with $g(\delta_i)>\eta$ for some $\eta>0$.   For each $\delta_i$,  choosing $t_i$ close enough to the infimal case (assuming without loss of generality $t_i>0$) we have 
\begin{equation}
\mathbf{F}(\Sigma_{t_i},\Sigma_0)\leq\delta_i \mbox{ and } g(\delta_i)/2\leq\mathbf{F}(\Sigma_{s_i},\Sigma_0)\mbox{ for some } 0\leq s_i\leq t_i.
\end{equation}
Since $\delta_i\to 0$ the fact that $\mathbf{F}(\Sigma_{t_i},\Sigma_0)\leq\delta_i$ implies $t_i\to 0$, and thus also $s_i\to 0$.  Thus $\mathbf{F}(\Sigma_{s_i},\Sigma_0)\geq \eta/2$ for $s_i\to 0$,  which is a contradiction.  
\end{proof}


Let us return to the proof of Proposition \ref{lsargument}.  
\begin{proof}

Fix $\delta>0$ (we will specify it precisely at the end of the proof).  Consider the set $U^i_\delta\in I^{k+1}$ given by

 \begin{equation}
U^i_\delta=\{(x,t)\in I^{k+1}\;|\; \mathbf{F}(\Phi_i(x, t),\mathcal{\overline{T}}_{min})\leq \delta\}.
\end{equation}

Assume $\delta$ is so small so that $I^k\times\{-1\}$ and $I^k\times\{1\}$ are disjoint from $U^i_\delta$.   We claim that for all $i$ large enough,  the bottom face $I^k\times\{-1\}$  and the top face $I^k\times\{1\}$ are contained in different connected components of $I^{k+1}\setminus U^i_\delta$.  
 
Otherwise for some fixed $\delta$ we obtain a subsequence of $i$ and paths $\gamma_{i}(s)=(x_i(s),t_i(s))$, $\gamma_i: [0,1] \to I^{k+1}$, with\begin{enumerate}
\item $\gamma_{i}(0)\subset I^k\times\{-1\}$ 
\item $\gamma_{i}(1)\subset I^k\times\{1\}$.
\item $\mathbf{F}(\Phi_i(\gamma_{i}(s)),\mathcal{\overline{T}}_{min})\geq\delta$ for each $s\in[0,1]$.  \label{faraway}
\end{enumerate}
Since $\{\Phi_i(\gamma_{i}(s))\}_{s\in [0,1]}$ is a $1$-sweepout,  it follows from \eqref{ls} and Theorem \ref{lens} that $\{\Phi_i(\gamma_{i}(s)\}_{s\in [0,1]}$ is a minimizing sequence for the $1$-parameter min-max problem. Thus by the Min-max Theorem \ref{minmax}, some min-max sequence $\{\Phi_i(\gamma_{i}(s_i)) \}$ converges in the sense of varifolds to a smooth embedded minimal surface $\Gamma$ (multiplicity greater than $1$ is excluded). By Theorem \ref{lens}, $\Gamma$ is a Clifford torus.  The convergence with multiplicity one implies that the corresponding sequence of flat cycles converges with no loss of mass in the limit.  By \cite[Proposition A.1]{de2013existence} this implies that $\{\Phi_i(\gamma_{i}(s_i) )\}$ also converges to a Clifford torus in the 
$\mathbf{F}$-metric. In light of (\ref{faraway}), this gives a contradiction.



Let $B_\delta^i$ denote the component of $I^k\times[-1,1]\setminus U^i_\delta$ containing $I^k\times\{-1\}$.   Consider $I^k\times[-1,1]$ as a cubical complex $I(k+1, j)$, where $j$ is chosen so large so that for each $(k+1)$-cell $\Delta$ there holds
\begin{equation}\label{fineness}
\mathbf{F}(\Phi_i(x),\Phi_i(y))\leq \delta/2, 
\end{equation}
for any $x, y\in \Delta$ (where $x$ and $y$ denote a $k+1$ tuple of points in $I^1$).
Note that the choice of the fineness $j$ depends on $i$.   

Let $C^i_\delta$ denote the $(k+1)$-chain in $I(k+1,j)$ consisting of all $(k+1)$-cells which contain at least one point of $B_\delta^i$.   Then we have by the triangle inequality for all $x\in C^i_\delta$
\begin{equation}
 \mathbf{F}(\Phi_i(x),\mathcal{\overline{T}}_{min})\geq \delta/2.   
\end{equation}

Let $b(i)$ denote the set of $k$ cells in $I(k+1,j)$ that are faces of exactly one $k+1$-cell in $C^i_\delta$.   Then
\begin{equation}
\partial C^I_\delta =\sum_{\alpha\in b(i)}\mbox{sgn}(\alpha)\alpha, 
\end{equation}
where $\mbox{sgn}$ of a cell is either $1$ or $-1$.   Note that all cells comprising the bottom face $I^k\times\{-1\}$ are contained in $\partial C^i_\delta$ and by the definition of the boundary map \eqref{boundarymap} the sign of such a cell is $(-1)^{k+1}$.  

By increasing $i$ if necessary,   every cell of $\partial C^i_\delta$ is disjoint from the ``top face" $k$-chain $I^k\times \{1\}$.  Otherwise we obtain a subsequence of $i$ and corresponding paths $\gamma_i$ from bottom to top face contained in $U^i_{\delta/2}$ with all corresponding surfaces an $\mathbf{F}$-distance at least $\delta/2$ from the space of Clifford tori $\mathcal{\overline{T}}_{min}$  giving the same contradiction as above.    


Let us define the $k$-chain:
\begin{equation}
R(i) = (-1)^k \partial C_\delta^i + (-1)^{k-1} \partial I^k \times [-1,0] + I^k\times\{-1\}.
\end{equation}
By construction the $k$-chain $R(i)$ is disjoint from the bottom face $I^k\times \{-1\}$.  We may also compute 
\begin{equation}\label{rel}
\partial R(i) = (-1)^k \partial^2C_\delta^i +\partial I^k\times \partial[-1,0]+ \partial I^k\times\{-1\} = \partial I^k\times\{0\}.
\end{equation}

From \eqref{rel} it follows that the $k$-chain $R(i)$ is an element of the relative homology group $H_k(I^k\times[-1,1], \partial I^k\times\{0\},\mathbb{Z})$.  Note that $\partial I^k\times\{0\}$ is a $(k-1)$-sphere.  From the long exact sequence for relative homology groups the boundary map: 
\begin{equation}
\partial: H_k(I^k\times[-1,1], \partial I^k\times\{0\},\mathbb{Z}) \to H_{k-1}(\partial I^k\times\{0\},\mathbb{Z})\cong \mathbb{Z}
\end{equation}
is a well-defined isomorphism.   Thus since $\partial R(i)=\partial I^k\times\{0\}$ and $\partial (I^k\times\{0\}) = \partial I^k\times\{0\}$, it follows that 
\begin{equation}
[R(i)]=[I^k\times\{0\}]\in H_k(I^k\times[-1,1], \partial I^k\times\{0\},\mathbb{Z}) .
\end{equation}

We now claim 
\begin{claim}\label{isclose}
For any $x\in R(i)$ there holds
\begin{equation}
\mathbf{F}(\Phi_i(x), \mathcal{\overline{T}}_{min})\leq \frac{3\delta}{2}+g(2\delta), 
\end{equation}
where $g(\delta)\to 0$ as $\delta\to 0$.  
\end{claim}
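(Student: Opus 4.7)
I would prove Claim \ref{isclose} by splitting the support of $R(i)$ into the part lying in the interior of the cube $I^{k+1}$ (which is contained in the cells of $\partial C_\delta^i$, since the bottom‑face contribution cancels and by choice of $i$ there are no contributions from the top face) and the part lying on the lateral wall $\partial I^k \times [-1,1]$, on which $\Phi_i$ coincides by construction with the background optimal foliation $\Sigma_t$. These two regions are controlled by rather different arguments.

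For the interior case, any $(x,t)$ lies on a $k$‑cell $\alpha \in \partial C_\delta^i$ that is a face of exactly two $(k+1)$‑cells, $\Delta \in C_\delta^i$ (containing a point $y \in B_\delta^i$) and $\Delta' \notin C_\delta^i$ (containing no point of $B_\delta^i$). The key step is to show that at least one of $\Delta,\Delta'$ must contain a point of $U^i_\delta$; granting this, the fineness assumption (\ref{fineness}) immediately yields $\mathbf{F}(\Phi_i(x,t), \overline{\mathcal{T}}_{min}) \leq \delta + \delta/2 = 3\delta/2$. To prove it, suppose $\Delta' \cap U^i_\delta = \emptyset$; then the connected set $\Delta'$ lies entirely in a single component $B' \neq B_\delta^i$ of $I^{k+1}\setminus U^i_\delta$, and $\alpha \subset \overline{\Delta'} \subset \overline{B'}$. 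Either $\alpha \cap B' \neq \emptyset$, in which case a path inside the connected cell $\Delta$ from $y$ to a point of $\alpha \cap B'$ must cross $U^i_\delta$ (since distinct open components of $I^{k+1}\setminus U^i_\delta$ are separated by $U^i_\delta$), producing a $U^i_\delta$‑point of $\Delta$; or $\alpha \subset \partial B'$, which in the interior of $I^{k+1}$ is contained in $U^i_\delta$, so the bound $\mathbf{F}(\Phi_i(x,t), \overline{\mathcal{T}}_{min}) \leq \delta$ is even better.

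For the lateral case, $\Phi_i(x,t) = \Sigma_t$, and the task is to control $\mathbf{F}(\Sigma_t, \overline{\mathcal{T}}_{min})$ purely in terms of $t$. A direct cellular computation, grouping the lateral $k$‑cells by the $(k-1)$‑cell $\sigma \subset \partial I^k$ over which they sit, shows that the lateral part of $R(i)$ at each such $\sigma$ has the form $\pm\,\sigma \times \bigl([-1,0] - L(\sigma)\bigr)$, where $L(\sigma)$ collects the interval cells $[a,a+2^{-j}]$ for which the adjacent $(k+1)$‑cell $\sigma' \times [a,a+2^{-j}]$ belongs to $C_\delta^i$ ($\sigma'$ being the unique $k$‑cell of $I^k$ with $\sigma \subset \partial\sigma'$). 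For $t$ in the support of this chain, I would apply the interior‑case argument to a neighboring cell at the boundary of $L(\sigma)$ to produce a time $t^* \in [-1,1]$ satisfying $\mathbf{F}(\Sigma_{t^*}, \Sigma_0) \leq 2\delta$. The factor $2\delta$ (rather than $3\delta/2$) comes from one additional application of fineness across cells together with the identification of $\Sigma_0$ as the closest point of $\overline{\mathcal{T}}_{min}$ to $\Sigma_{t^*}$, which is valid for $\delta$ small because the foliation $\{\Sigma_t\}$ meets $\overline{\mathcal{T}}_{min}$ only at $t=0$. Since $t$ lies between $0$ and $t^*$ along $[-1,1]$, Lemma \ref{fversushausdorff} then yields
\[
\mathbf{F}(\Sigma_t, \overline{\mathcal{T}}_{min}) \;\leq\; \mathbf{F}(\Sigma_t, \Sigma_0) \;\leq\; g(2\delta),
\]
which is the desired estimate.

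The chief obstacle is in the lateral step: the set $B_\delta^i$ need not be geometrically simple — it may intrude into the interior of $I^{k+1}$ and re‑emerge on the lateral wall at several distinct heights — so $L(\sigma)$ may consist of several intervals rather than an initial segment $[-1,t_1]$. One must therefore verify that \emph{every} connected piece of the support of $[-1,0] - L(\sigma)$ is flanked, at a boundary cell, by a time $t^*$ of the type above, so that Lemma \ref{fversushausdorff} applies uniformly across $R(i)$. Carefully tracking the interplay between the components of $\{t : \mathbf{F}(\Sigma_t, \Sigma_0) > \delta\}$ on the lateral wall and the component $B_\delta^i$ in the interior, and leveraging the optimality of $\{\Sigma_t\}$ as a one‑parameter sweepout realized by the round Clifford foliation, is what makes the argument go through.
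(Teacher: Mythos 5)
Your interior-case argument is correct and in fact tightens the step the paper leaves loosely worded: you identify that what must be shown is that at least one of the two $(k+1)$-cells sharing the face $\alpha$ contains a point of $U^i_\delta$, and your separation/connectedness argument proves it. (The paper's text says to take $y$ ``not contained in $U^i_\delta$'' and then invokes $\mathbf{F}(\Phi_i(y),\overline{\mathcal{T}}_{min})\leq\delta$, which only holds if $y$ \emph{is} in $U^i_\delta$; your reading is evidently the intended one.)

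The lateral case is where you diverge from the paper, and it is where your proposal has a real gap, one you yourself flag. After decomposing the lateral part of $R(i)$ over each $(k-1)$-cell $\sigma\subset\partial I^k$, you need every connected piece of the chain $[-1,0]-L(\sigma)$ to be flanked, on its side \emph{away} from $0$, by a time $t^*$ satisfying the $2\delta$-bound; only then does Lemma \ref{fversushausdorff} sweep the whole piece. You do not supply that verification, and it is not obvious, precisely because $L(\sigma)$ can be disconnected. The paper sidesteps all of this by working globally rather than $\sigma$-by-$\sigma$: it takes the extremal heights $s^i_{\min},s^i_{\max}$ over the entire support of $R(i)_b$. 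An extremal cell has a $t$-boundary face that cannot be cancelled by any other lateral cell (nothing of $R(i)_b$ lies beyond the extremum), so, since $\partial R(i)=\partial I^k\times\{0\}$, that face must be shared with an interior cell $\Delta'\in R(i)_0$; the interior bound plus one more fineness step gives $\mathbf{F}\le 2\delta$ at the extremal height, and then a single application of Lemma \ref{fversushausdorff} from $0$ out to $s^i_{\max}$ (and likewise down to $s^i_{\min}$) covers \emph{every} $t\in[s^i_{\min},s^i_{\max}]$ at once, independent of the component structure of $L(\sigma)$. You should replace your $\sigma$-by-$\sigma$ bookkeeping with this extremum argument; it is shorter and eliminates the obstacle you identify.

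One further point, relevant to both your write-up and the paper's: what the extremal cell argument controls directly is $\mathbf{F}(\Sigma_{t^*},\overline{\mathcal{T}}_{min})$, not $\mathbf{F}(\Sigma_{t^*},\Sigma_0)$, whereas Lemma \ref{fversushausdorff} takes the latter as hypothesis. Your appeal to ``$\Sigma_0$ as the closest point'' gestures at this, but the cleanest repair is to note that the proof of Lemma \ref{fversushausdorff} is a soft compactness argument which works verbatim with $\overline{\mathcal{T}}_{min}$ in place of $\Sigma_0$, since $t\mapsto\mathbf{F}(\Sigma_t,\overline{\mathcal{T}}_{min})$ is continuous and vanishes only at $t=0$. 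That removes the need to identify the nearest minimal torus at all.
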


There are two cases to consider.  First assume $\Delta$ is a $k$-cell in $R(i)$ that is not contained in $\partial (I^k\times [-1,1])$.   Let us denote the totality of such interior $k$-cells by $R(i)_0$.  Then by definition of $b(i)$ we have that $\Delta$ is the face of a $(k+1)$-cell that is not contained entirely in $U_\delta^i$.   Let $y$ be a point in this $(k+1)$-cell that is not contained in $U_\delta^i$.   Then by the triangle inequality (and the fineness condition \eqref{fineness}) we get for any $x\in\Delta$:
\begin{equation}\label{interior}
\mathbf{F}(\Phi_i(x),\mathcal{\overline{T}}_{min})\leq \mathbf{F}(\Phi_i(x),  \Phi_i(y)) +  \mathbf{F}(\Phi_i(y),\mathcal{\overline{T}}_{min})\leq \frac{3\delta}{2}.
\end{equation}

We now consider the $k$-cells in $R(i)_b$ of $R(i)$ that are contained in the boundary $\partial I^k\times [-1,1]$.  Let $s^i_{min}$ and $s^i_{max}$ denote the minimal (resp. maximal) value of $s$ so that $supp( R(i))\cap (\partial I^k\times\{s\})\neq\emptyset$.  

Let $\Delta\subset R(i)_b$ be a cell that when expanded $\alpha_1\otimes...\otimes\alpha_{k+1}$ has that $\alpha_{k+1}$ is a $1$-cell $[s_1,s_2]$ with $s_2=s_{max}^i$ (or with $s_1=s_{min}^i$).   Then since $R(i)$ is a relative cycle, it follows that $\Delta$ shares a $(k-1)$-face with a cell $\Delta'\in R(i)_0$.   Thus by the triangle inequality and \eqref{interior} applied to $\Delta'$ we obtain for any $y\in\Delta$
\begin{equation}
\mathbf{F}(\Phi_i(y),\mathcal{\overline{T}}_{min})\leq 2\delta.
\end{equation}

Applying Lemma \ref{fversushausdorff} we then obtain for \emph{any} cell $\Delta''\in R(i)_b$ and $x\in\Delta''$
 \begin{equation}\label{boundarypart}\mathbf{F}(\Phi_i(x), \mathcal{\overline{T}}_{min})\leq g(2\delta),\end{equation} 
where $g(7\delta/2)\to 0$ as $\delta\to 0$.  

The equations \eqref{interior} and \eqref{boundarypart} together complete the proof of Claim \ref{isclose}.  Finally we choose $\delta$ small enough so that \begin{equation}\frac{3\delta}{2}+g(2\delta)<\epsilon\end{equation} and set $\alpha_i=R(i)$.

\end{proof}

\subsection{The case $k=1$}
If $k=1$,  Theorem \ref{lsargument} furnishes a continuous path $\gamma(t)$ in $I\times [-1,1]$ from the midpoint of the left side of the parameter space $\{0\}\times\{0\}$ to the midpoint of the right side $\{1\}\times\{0\}$.   Such a path is homotopic to the horizontal path $\eta(t): I\times[-1,1]$ given by $\eta(t) = (t,0)$ for $t\in [-1,1]$.    Thus we obtain a stronger conclusion:
\begin{thm}[$k=1$ case]\label{k=1case}
For all $\epsilon>0$ small enough,  there exists a map $a':[0,1]\to\mathcal{T}$ homotopic (rel $\partial [0,1]$) to the representative $a\in\pi_1(\mathcal{T},\mathcal{T}_{min})$, with the property that 
\begin{equation}
\mathbf{F}(a'(t),\overline{\mathcal{T}}_{min})\leq\epsilon,
\end{equation}
for all $t\in [0,1]$.  
\end{thm}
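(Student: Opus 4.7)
The plan is to specialize the $k=1$ case of Proposition~\ref{lsargument} and exploit the simple connectivity of the parameter square $I\times[-1,1]$. Applying Proposition~\ref{lsargument} with $k=1$, for each small $\delta>0$ and all large enough $i$ we obtain a relative cubical $1$-cycle $\alpha_i\subset I\times[-1,1]$ with $\partial\alpha_i=\partial I\times\{0\}=\{(-1,0),(1,0)\}$, whose homology class generates $H_1(I\times[-1,1],\partial I\times\{0\})\cong\mathbb{Z}$ and whose support consists entirely of points $(x,t)$ at which $\mathbf{F}(\Phi_i(x,t),\overline{\mathcal{T}}_{min})\leq\epsilon$. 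Because $\alpha_i$ is a nontrivial relative cycle with only two boundary points, a connected component of its support joins $(-1,0)$ to $(1,0)$, which we reparametrize as a continuous path $\gamma_i:[0,1]\to I\times[-1,1]$. By our choice of $\delta$, the path $\gamma_i$ is disjoint from the ``circle'' faces $I\times\{\pm 1\}$, so each $\Phi_i(\gamma_i(s))$ is a smooth torus embedding lying within $\mathbf{F}$-distance $\epsilon$ of $\overline{\mathcal{T}}_{min}$.

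Since the square $I\times[-1,1]$ is convex, $\gamma_i$ is homotopic rel endpoints to the horizontal segment $\eta(s)=(2s-1,0)$ via the straight-line homotopy, and composing with $\Phi_i$ produces a homotopy rel endpoints in $\mathcal{T}$ from $\Phi_i\circ\gamma_i$ to $\Phi_i|_{I\times\{0\}}$. Now $\Phi_i\in\Pi$ is obtained from $\tilde\Phi$ by a family $F:I\times[-1,1]\to\mbox{diff}(M)$ which is the identity on $\partial(I\times[-1,1])$ and contractible rel that boundary; restricting the contraction to the slice $t=0$ supplies a second homotopy rel $\partial I$ in $\mathcal{T}$ from $\Phi_i|_{I\times\{0\}}$ back to $\Phi|_{I\times\{0\}}=\Phi=p_2\circ q\circ\Phi'$. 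Finally, the homotopy $P_\tau\circ\Phi'$ (built from the homotopy of pairs $P_\tau:\mathrm{id}\simeq p_2\circ q$ introduced in Section~\ref{section_main}) is a homotopy of pairs from $\Phi'$ to $\Phi$ which keeps both endpoints in $\mathcal{T}_{min}$ throughout.

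Concatenating the three homotopies above yields a homotopy of pairs (and, after rebasing within $\mathcal{T}_{min}$, a homotopy rel $\partial[0,1]$) from the original representative of $a$ to the map $a':=\Phi_i\circ\gamma_i:[0,1]\to\mathcal{T}$. By construction $\mathbf{F}(a'(s),\overline{\mathcal{T}}_{min})\leq\epsilon$ for every $s\in[0,1]$, which is the desired conclusion. The only delicate step is the extraction of the continuous path $\gamma_i$ from the abstract cubical chain $\alpha_i$; this uses the elementary fact that a relative $1$-cycle in a connected $2$-complex whose boundary is supported on a two-point set and which is nontrivial in relative homology must contain, in the support of one of its components, a path joining those two points. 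Everything after this extraction is immediate from the contractibility of the parameter square, so there is no real analytic difficulty remaining beyond what was already proved in Proposition~\ref{lsargument}.
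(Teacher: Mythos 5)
Your proposal is correct and follows essentially the same route as the paper. The paper's own treatment of the $k=1$ case (Section 5.1) is very terse: it simply asserts that Proposition~\ref{lsargument} "furnishes a continuous path $\gamma(t)$ in $I\times[-1,1]$" from one side-midpoint to the other, and that this path is homotopic to the horizontal path $\eta$, which is exactly your central observation. You add two worthwhile details that the paper leaves implicit: (i) a justification that a relative $1$-cycle with two-point boundary must contain, in its support, a path joining those two points (a parity argument on the $1$-skeleton); and (ii) the explicit chain of homotopies from $\Phi_i\circ\eta = \Phi_i|_{I\times\{0\}}$ back to $\Phi$ via the contraction of the saturation diffeomorphisms rel $\partial I^{k+1}$, and then from $\Phi$ back to $\Phi'$ via the homotopy of pairs $P_\tau$. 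Neither added step is a new idea, but both are genuinely needed to make the terse statement in the paper precise, and you handle both correctly, including the observation that $P_\tau$ is only a homotopy of pairs (so the endpoints wander inside $\mathcal{T}_{min}$, which must be absorbed by a rebasing). In short: same approach, more careful bookkeeping.
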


\section{Retracting filigree of tori}\label{section_hair}

In this section, inspired by ideas of Hatcher-Ivanov (\cite{H2}, \cite{I2}) we deform a family
of embeddings that is close in the varifold topology to 
$\overline{\mathcal{T}}_{min}$ to a family that is close to $\overline{\mathcal{T}}_{min}$ in the Hausdorff topology. This is accomplished by parametrically retracting
filigree\footnote{Filigree is ornamental wiring.  To the authors' knowledge, the first example of retracting such parts of surfaces in a geometric context appears in work of Almgren-Simon \cite{AS} on the existence of embedded solutions to Plateau's problem in convex balls.} or ``hair'' that protrudes outside of the small tubular neighborhood of a minimal surface. We formulate
the result in a more general setting, replacing the torus with any Heegaard surface $\Sigma$.

In this section we will assume that space $X$ is equipped with finite cubical decomposition and let $\partial X$ denote the support of the boundary of the cubical complex 
defined by (\ref{boundarymap}).
For $\Sigma$ an orientable surface embedded in a $3$-manifold, denote by $N_r(\Sigma)$ the $r$-tubular neighborhood about $\Sigma$.  For $r$ sufficiently small, $N_r(\Sigma)$ is diffeomorphic to $\Sigma\times [-r,r]$. 

\begin{prop} \label{hair}
      Let $\Sigma \subset M$ be a Heegaard surface in an orientable Riemannian $3$-manifold $M$
    and let $X$ denote a cubical complex.  For $r$ small enough, there exists $\varepsilon>0$ with the following property.
    Suppose $f: X \rightarrow Emb(\Sigma, M^3)$ is a continuous map
    with 
    \begin{enumerate}
    \item $\F(f(x), \Sigma)< \varepsilon$  for all $x \in X$ and 
    \item $f(x) \subset N_{\frac{r}{5}}(\Sigma)$ for all $x \in \partial X$.
    \end{enumerate}
Then there exists a homotopy
    $F: [0,1] \times X \rightarrow Emb(\Sigma, M^3)$, such that
    \begin{enumerate}
        \item $F(0,x) = f(x)$ for all $x \in X$;
        \item $F(t,x) = f(x)$ for $x \in \partial X$;
        \item $F(1,x)$ is contained in $N_r(\Sigma)$ for all $x \in X$.
    \end{enumerate}
\end{prop}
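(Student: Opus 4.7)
The plan is to use the $\mathbf{F}$-closeness of $f(x)$ to $\Sigma$ to control the area of the ``filigree'' of $f(x)$ lying outside $N_{r/5}(\Sigma)$, and then to retract this filigree into $N_r(\Sigma)$ via a parametric ambient isotopy of $M$. First I would fix $r$ small enough that the distance function $\rho(y)=d(y,\Sigma)$ is smooth on $N_r(\Sigma)$ and induces the standard identification $N_r(\Sigma)\cong\Sigma\times[-r,r]$. The $\mathbf{F}$-bound controls the mass of $f(x)$ on open sets disjoint from $\Sigma$, which immediately gives $\mathcal{H}^2(f(x)\setminus N_{r/5}(\Sigma))=O(\varepsilon)$ for each $x$.

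Next, applying the co-area formula to $\rho|_{f(x)}$, for each $x$ I would select a regular level $r(x)\in(r/5,r/4)$ at which $f(x)$ meets $\rho^{-1}(r(x))$ transversally in a collection of smooth curves of total length $O(\sqrt{\varepsilon})$. These curves lie on $\rho^{-1}(r(x))\cong\Sigma$ with its induced, near-isometric metric, and are short, so the isoperimetric inequality on $\Sigma$ provides pairwise disjoint embedded disks $D_j\subset\rho^{-1}(r(x))$ of small area capping them off. Each component $P$ of $f(x)\cap\{\rho\geq r(x)\}$ has small area with boundary among these curves, so the closed surface $P\cup\bigl(\bigcup D_j\bigr)$ is a small-area embedded surface in $M$; a standard small-area / local graphicality argument then forces $P$ to be an embedded disk bounding a small ball in the handlebody $\{\rho\geq r(x)\}$. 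An ambient isotopy of $M$ supported in a neighborhood of this ball sweeps $P$ across $D_j$ into $N_r(\Sigma)$, and composing over all filigree components of $f(x)$ produces, for fixed $x$, the desired deformation of $f(x)$ into $N_r(\Sigma)$.

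The principal difficulty is to carry this construction out continuously in $x\in X$, relative to $\partial X$. I would proceed by induction on the skeleta of the cubical complex. On each cell, transversality is an open condition and the co-area bound holds uniformly, so one may fix a single regular level function $r(x)$, a continuously varying system of capping disks, and a continuously varying family of supporting isotopies. Across cell boundaries the combinatorial type of the filigree (number and incidence of components) can jump, and the cellwise constructions must be reconciled via a partition of unity subordinate to the cubical decomposition; this is possible because any two valid choices of capping data differ by a small isotopy through small-area retractions and can therefore be interpolated. On $\partial X$ the hypothesis $f(x)\subset N_{r/5}(\Sigma)$ guarantees no filigree is present, so the constructed isotopy is the identity there, giving the rel-boundary homotopy. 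This parametric gluing step — maintaining continuity across jumps in the filigree combinatorics — is the main obstacle, and I expect it to be handled using the parametric surgery techniques of Hatcher and Ivanov alluded to in the statement.
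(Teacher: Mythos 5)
Your overall strategy — use the $\mathbf{F}$-bound plus the co-area formula to find a good level $\partial N_s(\Sigma)$ meeting $f(x)$ in short curves, cap them off, identify the protruding pieces as disks bounding small balls, and retract them by ambient isotopy, then do this inductively over the skeleta of $X$ — matches the paper's outline in broad strokes. But you have left a genuine gap exactly where you flag the ``main obstacle,'' and your proposed resolution (a partition of unity subordinate to the cubulation) cannot work: embeddings do not form a convex set, so there is no way to ``interpolate'' between cellwise choices of capping data or supporting isotopies by averaging. The paper resolves this with two ideas you don't have. First, a \emph{nesting} device: it assigns to each vertex $v$ of a fine cubulation a distinct cutting radius $s(v)\in(r/4,r/2)$, and observes (Lemma \ref{nested}) that if $s(v)<s(w)$ then the disk collection $D^w_x$ is contained in $D^v_x$ for all $x$ in a common cell. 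This nesting replaces interpolation: for each cell $\Delta$ one works with the disks associated to the vertex $v(\Delta)$ of smallest $s$, and the nesting guarantees consistency with choices already made on the faces. Second, to extend the deformation from $\partial\Delta\times[0,1]$ to $\Delta\times[0,1]$, the paper cuts out a handlebody $M'$ (the complement of a collar around the fixed part of the surface) and invokes Hatcher's theorem (Theorem 1(b) in \cite{H3}) that the space of embedded compressing disks in a handlebody is contractible; this produces the needed parametric isotopy of the family of disks and is the actual engine of the ``parametric surgery techniques'' you gesture toward. Without these two ingredients the inductive extension does not go through.

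A smaller but still real issue is your claim that ``a standard small-area / local graphicality argument then forces $P$ to be an embedded disk bounding a small ball.'' Small area alone does not force a piece of $f(x)$ cut along short curves to be a disk; one must rule out non-disk components topologically. The paper does this via a degree argument: after neck-pinching surgeries along the short curves one obtains a closed surface $\Sigma_x'$ in the tubular neighborhood whose projection to $\Sigma$ has degree one, and if some curve of $L(x)$ failed to bound a disk in $\Sigma_x$ the resulting $\Sigma_x'$ would have genus strictly less than that of $\Sigma$, a contradiction. Only then does the small-area estimate (combined with Lemma 1 of \cite{MSY}) let one conclude that the capped-off sphere bounds a $3$-ball, so that Alexander's theorem furnishes the desired local isotopy. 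You should make this topological step explicit rather than attributing it to ``local graphicality.''
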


\begin{proof}
Suppose $r_0>0$ is chosen small enough so that $N_{r_0}(\Sigma)$ is diffeomorphic to $\Sigma\times [-r_0,r_0]$. 
Let $i_0>0$ denote the infimal injectivity radius of the smoothly varying surfaces $\{\partial N_s(\Sigma)\}_{s\in [0,r_0]}$.  Choose $r<r_0$ small enough so that $r<i_0$.

Let $p$ denote the projection map $p:N_r(\Sigma)\to\Sigma$. Shrinking $r$ if necessary, there exists $\Lambda = \Lambda(M)>0$ so that for any smooth closed surface $\Gamma\subset N_r(\Sigma)$,
\begin{equation}\label{areadistortion}
|\mbox{Area}(\Gamma)-||p_{\#}(\Gamma)|||\leq \Lambda r^2.
\end{equation}
Here $||p_{\#}(\Gamma)||$ denotes the total mass of the projected varifold $p_{\#}(\Gamma)$, which counts overlaps with their respective multiplicity (cf. page 7 in \cite{CD}).  
Denote $f(x)$ by $\Sigma_x$.
Assume that $\varepsilon>0 $ is small enough
so that \begin{equation}\label{2} \mathcal{H}^2(\Sigma_x \setminus N_{\frac{r}{10}}(\Sigma)) <A_0 < \frac{r^2}{100}.\end{equation}

Given $x \in X$, by the coarea formula we have
\begin{equation} \label{1}
\int _{r/4}^{r/2}\mathcal{H}^1(\partial N_s(\Sigma) \cap \Sigma_x) ds\leq\mathcal{H}^2(\Sigma_x \cap (N_{\frac{r}{2}}(\Sigma) \setminus  N_{\frac{r}{4}}(\Sigma)))< \frac{r^2}{100}.\end{equation}
Hence, by Sard's lemma, \eqref{2} and \eqref{1} we can find $\tilde{s}(x) \in (\frac{r}{4},\frac{r}{2})$,
such that $\Sigma_{x'}$ intersects $\partial N_{\tilde{s}(x)}(\Sigma)$ transversally for all $x'$
in a small neighborhood of $x$, and
 the length of $L(x') = \Sigma_{x'} \cap \partial N_{\tilde{s}(x)}(\Sigma)$
is at most $l_0< r$.  By the choice of $r$, we get that each component of $L(x')$ is a circle bounding a disk in $\partial N_{\tilde{s}(x)}(\Sigma)$ and by the (uniform) isoperimetric inequality for the surfaces $\{\partial N_s(\Sigma)\}_{s\in [0,r]}$ there exists $C=C(M)>0$ so that the area of all such disks is at most $Cr^2$. 

By compactness there exists a finite set of balls $\mathcal{B} = \{B_i\}$,
such that balls of $\frac{1}{10}$ radius cover $X$, $X \subset \bigcup \frac{1}{10} B_i$,
and there exists $s_i \in (\frac{r}{4},\frac{r}{2})$, such that $\Sigma_x$ intersects 
$\partial N_{s_i}(\Sigma)$ transversally with the length of $L(x)$ bounded by $ r$ for all $x \in B_i$.
(If the ball $B_i$ intersects $\partial X$ we set $s_i = \frac{r}{4}$; note that we can 
arrange for balls $B_i \in \mathcal{B}$ that intersect $\partial X$ to be sufficiently small, so that the intersection $\Sigma_x \cap \partial N_{\frac{r}{4}}(\Sigma)$
is empty for $x \in B_i$.) Note that we can also assume that
for all $B_i$'s that do not intersect $\partial X$ the values of $s_i$ are distinct.

Fix a cubical subdivision of $X$ and let $X^{(k)}$ denote the $k$-skeleton of the subdivision.  The cubulation is chosen so that the diameter of each cell is less than
$\min_i \{ \frac{1}{10} rad(B_i) \}$. For each  $v \in X^{(0)}$
there exists a ball $B(v)=B_i \in \mathcal{B}$, such that $\Delta \subset B(v)$
for each cell $\Delta$ that contains $v$. Let $P_v = \partial  N_{s_i}(\Sigma)$
denote the corresponding surface that intersects $\Sigma_v$ transversally
and set $s(v) = s_i$. 
Note that by construction $\Sigma_x \cap P_v$ is transverse and has controlled length for all $x$ in a cell 
that contains $v$.  Fix a cell $\Delta$ and let $v \in \Delta \cap X^{(0)}$ be a vertex.  For $x \in \Delta$ we have that $L(x) = P_v \cap \Sigma_x$ is a smooth family 
of circles. 

We claim that there exists $E>0$ so that each connected component of $L(x)$ bounds a disk in $\Sigma_x$
of area at most $Er^2$. Let us first show that each component of $L(x)$ bounds a disk. Indeed, for the length $l_0$ of $L(x)$ sufficiently small
we can perform ``neck-pinching'' surgeries along $L(x)$ (cf.\ Lemma 4.3 in \cite{KeLiSo})
and obtain a closed surface $\Sigma_x'$ in the tubular neighborhood of $\Sigma$
that still coincides with $\Sigma$ on a large set.  Then the projection of $N_r(\Sigma)$ onto $\Sigma$ restricted to $\Sigma_x'$
is a degree $1$ map. If a connected component of $L(x)$ did not bound a disk,
then $\Sigma_x'$ has a smaller genus than $\Sigma$, which is impossible.  
Similarly, one can deduce from closeness in varifold topology that
the areas of any disk in $\Sigma_x$ bounded by $L(x)$ must be small (depending on $r$).  More precisely, by \eqref{areadistortion} and the choice of $C$ we have (letting $D$ denote the union of all disks in $\Sigma_x$ bounded by the curves $L(x)$)
\begin{equation}\label{area3}
\mbox{Area}(\Sigma)-\Lambda r^2\leq \mbox{Area}(\Sigma_x')= \mbox{Area}(\Sigma_x)-\mbox{Area}(D)+Cr^2.
\end{equation}
In the inequality we have used that 
$||p_{\#}(\Sigma_x')||\geq \mbox{Area}(\Sigma)$ 
together with \eqref{areadistortion}. 
By shrinking $\varepsilon$ we can arrange
\begin{equation}\label{area4}
|\mbox{Area}(\Sigma)-\mbox{Area}(\Sigma_x)|\leq \varepsilon\leq r^2.  
\end{equation}
Thus we obtain for some $E>0$ (combining \eqref{area3} and \eqref{area4}):
\begin{equation}\label{areaofdisks}
\mbox{Area}(D)\leq Er^2.  
\end{equation}

We will say that a disk $D$ in $\Sigma_x$ has ``small area" if it satisfies inequality (\ref{areaofdisks}).
We have that each connected component of $L(x)=P_v \cap \Sigma_x$ bounds a disk of small area and we let
 $D^v_x$ denote the union 
of all such disks. Define $C^v_x = \partial D^v_x$. 
We can think of $C^v_x$ as the union of ``outermost'' circles and observe that it has the following properties:

\begin{itemize}
    \item each connected component $c$ of $C^v_x$ bounds 
a disk $D_c$ of small area, such that $D_c$ is disjoint from $C^v_x \setminus c$;
    \item  the union
$D^v_x$
of disks $D_c$ for all connected components $c$ of $C^v_x$ satisfies $P_v \cap \Sigma_x \subset D^v_x$.
\end{itemize}


The following key property follows immediately from the above definition.

\begin{lemma} \label{nested}
    Suppose $v,w$ are vertices of $\Delta$ and $s(v) < s(w)$, then $D^w_x \subset D^v_x$
    for all $x \in \Delta$.
\end{lemma}


Given a vertex $v \in X$ let $\eta(v)>0$ be such that the intersection of $\Sigma_x$
with $\partial N_{s(v)+\eta'}(\Sigma)$
is transverse and has length $<l$ for all $x \in B(v)$
and $\eta' \in (-\eta(v), \eta(v))$. Moreover, 
it will be convenient to assume $\eta(v) < \frac{|s(v')-s(v)|}{5}$ for all vertices $v' \neq v \in X^{(0)} \setminus \partial X$.

For a cell $\Delta$ of $X \times [0,1]$ let $\overline{\Delta}$ denote the 
projection of $\Delta$ onto $X \times \{ 0 \}$ and let $s(\Delta) = \min \{ s(v): v \in \overline{\Delta}^{(0)} \}$
and let $v(\Delta)$ be the vertex of $\overline{\Delta}$ with $s(v(\Delta))=s(\Delta)$.

We will define the family $\Sigma_{(x,t)}$ inductively
on the $k$-skeleton of $X \times [0,1]$, so that for each $k$-cell $\Delta$ there 
exists an extension of the family $\{D^{v(\Delta)}_{(x,t)} \}_{(x,t) \in \partial \Delta}$
    to a smooth family of disks $\{D^{v(\Delta)}_{(x,t)} \}_{(x,t) \in \Delta}$ in $\Sigma_{(x,t)}$
with the following properties:

\begin{enumerate}
    \item $\Sigma_{(x,t)} \cap P_{v(\Delta)} \subset D^{v(\Delta)}_{(x,t)} $;
    \item There exists $\eta_\Delta \in (0, 2^{dim(\Delta)-dim(X)}\eta(v))$, so that for all $(x,t) \in \Delta$ there holds \begin{equation}\Sigma_{(x,t)} \setminus N_{\eta_\Delta}(D^{v(\Delta)}_{(x,t)}) =
    \Sigma_{(x,0)} \setminus N_{\eta_\Delta} (D^{v(\Delta)}_{(x,0)})\end{equation} \label{boundary of D}
    (here $N_{\eta_\Delta}$ denotes the tubular neigbourhood within
    $\Sigma_{(x,t)}$);
    \item 
   $\Sigma_{(x,1)} \subset N_{r}(\Sigma)$
    for all $(x,1) \in \Delta$;
    \item $D^{v(\Delta)}_{(x,t)}  = D^{v(\Delta)}_{(x,0)}$ for $x \in \Delta \cap (\partial X \times [0,1])$.
\end{enumerate}

First we describe the construction for $X^{(0)} \times [0,1]$.
For a vertex $v \in X_0 \setminus \partial X$ we define an isotopy of $D^v_v$ that deforms $D^v_v$
to a collection of disks in a very small neighborhood of $P_v$. 
This isotopy will be constant on $\Sigma_v \setminus N_{\eta}(D^v_v)$.

The deformation of $D^v_v$ is performed by induction on the number of connected components of $D^v_v \cap P_v$. Let $c$
be an innermost circle of $D^v_v \cap P_v$
in some disk in $D^v_v$ and let $D \subset D^v_v$ be a disk bounded by $c$. 
Since the length of $c$ is small, as observed earlier, it bounds a disk $D'$ of area at most $Cr^2$ in $P_v$.
Thus by \eqref{areaofdisks} the surface $D\cup D'$ is a 2-sphere of area at most $(C+E)r^2$ and, shrinking $r$  if necessary, $D\cup D'$ can be guaranteed to bound a $3$-ball $B \subset M$
(cf. Lemma 1 in \cite{MSY}).
By Alexander's theorem there exists a smooth ambient isotopy that deforms $D$ to $D'$
and is constant outside of a small neighborhood of $B$.  Iterating this process 
gives
the extension of $F$ to $X^{(0)} \times [0,1]$.

So far we have defined the extension over all $0$-cells and the vertical $1$-cells.  Now we prove the inductive step.  Suppose the extension has been defined over all $k$-cells.  Let $\Delta$ be a $(k+1)$-cell of $X \times [0,1]$ 
that does not lie in $X \times \{0\}$.  By the inductive assumption we have defined $\Sigma_{(x,t)}$ on $(x,t) \in \partial \Delta$, so that properties
(1)-(4) are satisfied for each cell comprising $\partial \Delta$.
By Lemma \ref{nested} and property (\ref{boundary of D}) we have that the family of disks $D^{v(\Delta)}_{(x,t)} \subset \Sigma_{(x,t)}$ is well-defined for $(x,t) \in \partial \Delta$.

We now consider two cases. 

Case 1. $\Delta = \Delta' \times [0,1]$ for a cell $\Delta'$ of $X$.

If $\Delta' \subset \partial X$ we set $\Sigma_{(x,t)} = \Sigma_{(x,0)}$.  Assume the interior of $\Delta'$
is disjoint from $\partial X$. Let \begin{equation}\Sigma_{(x,t)}' = \Sigma_{(x,t)} \setminus N_{\frac{3}{2}\eta_{\Delta'}}(D^{v(\Delta')}_{(x,t)}),\end{equation}
where $\eta_{\Delta'}$ is determined from item (2) in the induction applied to the cell $\Delta'$.
Observe that by our choice of $\eta_{\Delta'}$, the surface $\Sigma'_{(x,t)}$ is obtained from $\Sigma_{(x,t)}$ topologically by removing finitely many disks.  By the isotopy extension theorem there exists a family of diffeomorphisms 
$\{ \Phi_x \}_{x \in  \Delta'} \subset\mbox{diff}(M)$, satisfying 
$\Phi_x \circ f(x) = f(v(\Delta'))$.

By property (\ref{boundary of D}) we have $\Sigma_{(x,t)}' = \Sigma_{(x,0)}'$
and hence $\Phi_{x}(\Sigma_{(x,t)}') = \Sigma_{(v(\Delta),0)}'$ for all $(x,t) \in \partial \Delta$.
Consider the $3$-manifold with boundary \begin{equation} M' = M \setminus N_{\eta'}(\Sigma_{(v(\Delta),0)}'),\end{equation}
where $\eta' \in (0, \frac{\eta_{\Delta'}}{2})$ 
is sufficiently small so that \begin{equation}D_{(x,t)} = \Phi_{x}(\Sigma_{(x,t)}) \cap M'\end{equation}
is a smooth family of disks with fixed boundary in $\partial M'$
for $(x,t) \in \partial \Delta$. Observe that if $H$ is a genus $g$ Heegaard surface from which $k$ disks have been removed, then the boundary of a small tubular neighborhood of $H$ is a genus $2g+k-1$ Heegaard surface.  As $M'$ arises from such a procedure, $M'$ is a handlebody.  By construction, the family $D_{(x,t)}$ consists of compressing disks for $M'$.  

It will be convenient to set $y=(x,t) \in \partial \Delta $.
We will define a deformation $\{D_{(y,s)} \}_{(y,s) \in \partial \Delta \times [0,1]}$ of the family of embedded disks
$\{D_{(y,0)}=D_y \}$ to a constant family $\{D_{(y,1)}=D_v \}$.
We proceed by induction on the number $k$ of connected components
of $D_y$. 
Let $D_y^1$ denote a connected component of $D_y$.
By Theorem 1 (b) in  \cite{H3} there exists a contraction  
$\{ D_{(y,s)}^1 \}_{(y,s) \in \partial \Delta \times [0,1]}$
of embedded disks  $\{ D_y^1 = D_{(y,0)}^1 \}$ 
with $D_{(y,1)}^1 = D_v^1$ for all $y \in \partial \Delta$.
By the isotopy extension theorem there exists a family of ambient isotopies 
$\Psi_{(y,s)}: M' \rightarrow M'$ that fix $\partial M'$
and satisfying $\Psi_{(y,s)}(D_v^1) = D_{(y,s)}^1$. 
Then $\{ \Psi_{(y,s)} \circ \Psi_{(y,0)}^{-1} (D_y) \}_{(y,s) \in \partial \Delta \times [0,1]}$ 
is an isotopy that starts on $\{ D_y\}_{y \in \partial \Delta}$ and 
contracts the connected component $D_y^1$ to disk $D_v^1$.
Proceeding this way we can contract all connected components of $D_y$. Specifically,
we redefine $M'$ to exclude the small tubular neighborhood of disks that we already
deformed to a constant disk and apply 
\cite[Theorem 1(b)]{H3} and the isotopy extension theorem as described above.

The deformation described above gives an extension $\{ D_{(x,t)}\}$ for all $(x,t) \in \Delta$.
We then define \begin{equation}\Sigma_{(x,t)} = (\Sigma_{(x,0)} \setminus \Phi_{x}^{-1}(M')) \cup \Phi_{x}^{-1}(D_{(x,t)} )\end{equation} for $(x,t)$ in the interior of $\Delta$.  Taking $\eta_\Delta:= \eta'$, this completes the inductive step in this case.  

Case 2. $\Delta \subset X \times \{1\}$. 

In this case we proceed exactly
as in Case 1, except that in place of $M$ we put $N_r(\Sigma)$.  
To extend $F$ to a $1$-dimensional cell $E$ in $X^{(1)} \times \{ 1\} $ with $\partial E = \{ (w,1), (v(E),1) \}$
we use Alexander's theorem and an ambient isotopy $\Phi$ that fixes $\Sigma_x \setminus D^{v(E)}_x$
to contract $D^{v(E)}_x$ inside $N_r(\Sigma)$.

For higher dimensional skeleta we construct $M' \subset N_r(\Sigma)$ as in Case 1 (using $N_r(\Sigma)$ in place of $M$). Note that $M'$ is a compression body, and the disks $D_{(x,t)}$ are then compressing disks for this compression body.  In particular, Theorem 1 (b) in  \cite{H3} applies in this 
setting to give a contraction of families of disks in $M'$. In the case when $\Sigma \cong S^2$ we observe that
small area implies that
all disks must lie in the same isotopy class and so the arguments of \cite{H3} apply to give the desired contraction.
\end{proof}

    Let $\Sigma$ be a Heegaard surface in $M$.
Let $\mbox{isom}^+(M,\Sigma)$ denote the subgroup of $\mbox{isom}(M)$ consisting of isometries $\phi \in \mbox{isom}(M)$, such that
$\phi(\Sigma)=\Sigma$ and $\phi$ induces the same orientation on $\Sigma$.
Let $\overline{\mathcal{T}}(\Sigma,M) = \mbox{isom}(M) /\mbox{isom}^+(M,\Sigma)$. We can think of the finite dimensional manifold $\overline{\mathcal{T}}(\Sigma,M)$\footnote{It is possibly a disconnected manifold as when $M=\mathbb{RP}^3$ is endowed with the round metric and $\Sigma$ is a Heegaard torus.} as the space of
unparametrized oriented surfaces obtained from $\Sigma$ by an ambient isometry.

We will need the following two lemmas:

\begin{lemma} \label{contractibility}
    There exists a monotonically increasing contractibility function $$\rho: \mathbb{R}_{\geq 0} \rightarrow \mathbb{R}_{\geq 0}$$ with $$\lim_{r \rightarrow 0} \rho(r) =0,$$ such that for every $S \in \overline{\mathcal{T}}(\Sigma,M) $
    the set
    $B^{\F}_r(S) \cap \overline{\mathcal{T}}(\Sigma,M)$ is contractible in 
    $B^{\F}_{\rho(r)}(S) \cap \overline{\mathcal{T}}(\Sigma,M)$. 
\end{lemma}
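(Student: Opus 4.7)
The plan is to exploit that $\overline{\mathcal{T}}(\Sigma,M)$ is a compact smooth manifold (with at most finitely many components) carrying two continuous metrics which induce the same topology: the ambient $\F$-metric, and a Riemannian distance $d_0$ coming from a bi-invariant metric on the Lie group $\mbox{isom}(M)$. The contraction will then be radial exponential retraction in $d_0$, and $\rho$ will be constructed by composing moduli of continuity relating the two metrics.

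\emph{Step 1 (set-up and uniform equivalence).} Since $\mbox{isom}(M)$ is a compact Lie group and $\mbox{isom}^+(M,\Sigma)$ is a closed subgroup, the quotient $\overline{\mathcal{T}}(\Sigma,M)$ is a compact smooth manifold. Fix a bi-invariant metric on $\mbox{isom}(M)$; it descends to a Riemannian metric $g_0$ on $\overline{\mathcal{T}}(\Sigma,M)$. Let $d_0$ be the associated distance and $\iota_0>0$ a uniform lower bound on the injectivity radius across all components. The map $\overline{\mathcal{T}}(\Sigma,M)\to(\text{varifolds},\F)$ sending a coset $[\phi]$ to the oriented surface $\phi(\Sigma)$ is well-defined, continuous, and injective (orientation is preserved by $\mbox{isom}^+(M,\Sigma)$). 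By compactness it is a homeomorphism onto its image, so there exist continuous monotone functions $\omega_1,\omega_2:\mathbb{R}_{\geq 0}\to\mathbb{R}_{\geq 0}$ with $\omega_1(0)=\omega_2(0)=0$ satisfying
\begin{equation*}
\F(S,T) \leq \omega_1(d_0(S,T)), \qquad d_0(S,T) \leq \omega_2(\F(S,T)),
\end{equation*}
for all $S,T$ in the same connected component of $\overline{\mathcal{T}}(\Sigma,M)$.

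\emph{Step 2 (radial contraction in $d_0$).} The finitely many connected components of $\overline{\mathcal{T}}(\Sigma,M)$ are separated by some positive $\F$-distance $d_*>0$. Pick $r_0>0$ small enough that $r_0<d_*/2$ and $\omega_2(r_0)<\iota_0$. Then for $r<r_0$ and any $S\in\overline{\mathcal{T}}(\Sigma,M)$, the set $B^{\F}_r(S)\cap\overline{\mathcal{T}}(\Sigma,M)$ lies in the component of $S$ and satisfies $B^{\F}_r(S)\cap\overline{\mathcal{T}}(\Sigma,M)\subset B^{d_0}_{\omega_2(r)}(S)$, which is geodesically convex. The formula
\begin{equation*}
H(t,T)=\exp_S\bigl((1-t)\exp_S^{-1}(T)\bigr),\qquad t\in[0,1],
\end{equation*}
defines a continuous deformation retraction of $B^{\F}_r(S)\cap\overline{\mathcal{T}}(\Sigma,M)$ to $\{S\}$, entirely inside $B^{d_0}_{\omega_2(r)}(S)$.

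\emph{Step 3 (bounding the $\F$-excursion and defining $\rho$).} Along the retraction, $d_0(H(t,T),S)\leq d_0(T,S)\leq \omega_2(r)$, and hence
\begin{equation*}
\F(H(t,T),S) \leq \omega_1(\omega_2(r))
\end{equation*}
by Step 1. Define $\rho(r):=\omega_1(\omega_2(r))$ for $r<r_0$ and extend $\rho$ to a monotone increasing function on all of $\mathbb{R}_{\geq 0}$ by setting $\rho(r)$ equal to the (finite) $\F$-diameter of $\overline{\mathcal{T}}(\Sigma,M)$ for $r\geq r_0$. Then $\rho$ is monotone increasing with $\rho(r)\to 0$ as $r\to 0$, and the contraction of Step~2 takes place entirely inside $B^{\F}_{\rho(r)}(S)\cap\overline{\mathcal{T}}(\Sigma,M)$, which is the desired conclusion.

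The only nontrivial input is the uniform equivalence of $d_0$ and $\F$ in Step 1, and this is the standard compactness argument that any two continuous metrics inducing the same topology on a compact space are uniformly equivalent; here both metrics descend from continuous constructions on the compact homogeneous space $\mbox{isom}(M)/\mbox{isom}^+(M,\Sigma)$.
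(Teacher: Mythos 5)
Your proof is correct and follows essentially the same approach as the paper: fix a Riemannian metric on the compact homogeneous space $\overline{\mathcal{T}}(\Sigma,M)$, observe by compactness that it is uniformly equivalent to the $\F$-metric, contract the small $\F$-ball radially inside a small geodesic ball (below the injectivity radius), and read off $\rho$ from the metric comparison. The only difference is that you spell out the two moduli of continuity $\omega_1,\omega_2$ and the separation of components explicitly, whereas the paper leaves these implicit.
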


\begin{proof}
    Fix a Riemannian metric $g$ on $\overline{\mathcal{T}}(\Sigma,M)$. As in the proof of Lemma \ref{fversushausdorff} we have that for any $r'>0$, if $r>0$ is small enough, 
    $B^{\F}_r(S) \cap \overline{\mathcal{T}}(\Sigma,M) \subset B_{r'}^g(S)$. Let $\rho(r)>0$ be such that 
    $B_{r'}^g(S) \subset B^{\F}_{\rho(r)}(S) \cap \overline{\mathcal{T}}(\Sigma,M)$. By the definiton of the $\F$
    metric we can choose it so that $\rho(r) \rightarrow 0$ as $r \rightarrow 0$. Then for $r'$ 
    less than the injectivity radius of $(\overline{\mathcal{T}}(\Sigma,M),g)$
    we have that $B^{\F}_r(S) \cap \overline{\mathcal{T}}(\Sigma,M)$ is contractible in 
    $B^{\F}_{\rho(r)}(S) \cap \overline{\mathcal{T}}(\Sigma,M)$.
\end{proof}


\begin{lemma} \label{g}
    For every $\varepsilon>0$ there exists $\varepsilon' >0$ with the following property.
    Suppose $f: X \rightarrow Emb(\Sigma, M)$
    is a continuous map, such that for every $x \in X$ we have $\F(f(x), \overline{\mathcal{T}}(\Sigma,M))< \varepsilon'$. Then there exists a    
    continuous
    map $g: X \rightarrow \overline{\mathcal{T}}(\Sigma,M) $ with $\F(f(x), g(x)) < \varepsilon$.

    Moreover, for every $x\in\partial X$, $g(x)$ coincides with the non-parametrized oriented
    torus induced by $f(x)$.
\end{lemma}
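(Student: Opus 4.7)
My plan is to construct $g$ by extending skeleton-by-skeleton over a sufficiently fine cubical subdivision of $X$, using the contractibility function $\rho$ from Lemma \ref{contractibility} to obstruction-freely extend at each step.

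Fix $\varepsilon>0$ and let $k = \dim X$. I would first choose a decreasing sequence $\varepsilon = \delta_k > \delta_{k-1} > \cdots > \delta_0 > 0$ and a parameter $\varepsilon' \in (0, \delta_0)$ satisfying $\rho(2\delta_{j-1} + \varepsilon') + 2\varepsilon' < \delta_j$ for each $j = 1, \ldots, k$; this is possible because $\rho(r) \to 0$ as $r \to 0$. By uniform continuity of $f$ on the compact complex $X$, I can then further subdivide $X$ so that $\F(f(x), f(y)) < \varepsilon'$ whenever $x, y$ lie in a common closed cell. For each $x \in \partial X$, set $g(x) := [f(x)]$, the unparametrized oriented image of $f(x)$; this lies in $\overline{\mathcal{T}}(\Sigma,M)$ by the implicit hypothesis on the boundary and satisfies $\F(f(x), g(x)) = 0$ because the $\F$-metric depends only on the induced varifold. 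For each interior vertex $v$, choose any $g(v) \in \overline{\mathcal{T}}(\Sigma,M)$ with $\F(f(v), g(v)) < \varepsilon'$, available from the standing hypothesis.

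Now I would extend $g$ by induction on the dimension $j \geq 1$ of cells not contained in $\partial X$. Suppose $g$ has been defined continuously on $\partial X \cup X^{(j-1)}$ with $\F(f(x), g(x)) < \delta_{j-1}$ throughout. For each interior $j$-cell $\Delta$, fix a vertex $v_0$ of $\Delta$. The triangle inequality combined with the fineness of the subdivision gives $\F(g(x), g(v_0)) < 2\delta_{j-1} + \varepsilon'$ for every $x \in \partial \Delta$, so $g|_{\partial\Delta}$ maps $\partial \Delta \cong S^{j-1}$ into $B^{\F}_{2\delta_{j-1}+\varepsilon'}(g(v_0)) \cap \overline{\mathcal{T}}(\Sigma,M)$. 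By Lemma \ref{contractibility}, this ball is contractible inside $B^{\F}_{\rho(2\delta_{j-1}+\varepsilon')}(g(v_0)) \cap \overline{\mathcal{T}}(\Sigma,M)$, so the map extends continuously over $\Delta$ into the larger ball. A final application of the triangle inequality yields $\F(f(x), g(x)) < 2\varepsilon' + \rho(2\delta_{j-1}+\varepsilon') < \delta_j$ for all $x \in \Delta$, closing the induction.

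After $k$ inductive steps, $g$ is continuously defined on all of $X$ with image in $\overline{\mathcal{T}}(\Sigma,M)$, coincides with the canonical unparameterized projection on $\partial X$, and satisfies $\F(f(x), g(x)) < \delta_k = \varepsilon$. The only real difficulty is the bookkeeping of the radii $\delta_j$ and $\varepsilon'$; the geometric content is entirely absorbed into Lemma \ref{contractibility}, which ensures that $\overline{\mathcal{T}}(\Sigma,M)$ behaves like an ANR in the $\F$-topology and thus permits skeleton-by-skeleton extensions without any higher-dimensional obstructions.
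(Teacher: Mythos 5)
Your proposal is correct and follows essentially the same strategy as the paper: both choose a subdivision so fine that $f$ varies by less than $\varepsilon'$ on each cell, define $g$ on vertices by projecting to a nearby surface in $\overline{\mathcal{T}}(\Sigma,M)$ (with the canonical projection on $\partial X$), and then extend skeleton-by-skeleton by using the contractibility function from Lemma \ref{contractibility} to fill in each cell, with a triangle-inequality estimate controlling how far $g$ drifts from $f$. The only difference is the bookkeeping of radii: the paper iterates the shifted contractibility function $\rho'(r) = \rho(r) + 3\varepsilon'$ to bound the drift at level $k$ by $(\rho')^{k}(3\varepsilon')$, whereas you set up an explicit increasing scale $\delta_0 < \dots < \delta_k = \varepsilon$ satisfying $\rho(2\delta_{j-1}+\varepsilon') + 2\varepsilon' < \delta_j$; these are equivalent.
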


\begin{proof}
    Pick a fine cubical subdivision of $X$, 
    so that for each cell $\Delta$ of the subdivision  
    we have $\F(f(x),f(y))< \varepsilon'$ for all $x,y \in \Delta$.
    Let $n$ be the maximal dimension of a face in $X$ 
    and assume $\varepsilon'>0$ to be chosen small enough, so that $\varepsilon'< \frac{\varepsilon}{10}$ and $(\rho')^{n}(3\varepsilon')= \rho' \circ ... \circ \rho'(3\varepsilon')<\frac{\varepsilon}{3}$,
    where $\rho'(r) = \rho(r)+ 3 \varepsilon'$ and $\rho$ is the contractibility function
    from Lemma \ref{contractibility}.

    Define map $g$ on the $0$-skeleton of $X$
    to $\overline{\mathcal{T}}$ by mapping it to a closest surface in $\overline{\mathcal{T}}$ in the $\F$ metric. Note that by the triangle inequality we have $\F(g(x),g(y)) \leq 3 \varepsilon' $ for $\{x,y\} = \partial E$
    and $E$ a $1$-cell in $X$.

    Assume, by induction, that we have extended $g$ to the $(k-1)$-skeleton of $X$, so that for each $k$-cell $\Delta$ of $X$ we have $g(\partial \Delta) \subset B^{\F}_{(\rho')^{k-1}(3\varepsilon')}(v)$ for every vertex $v \in \Delta$.
    Then by the contractibility property 
    (\ref{contractibility}) we can extend $g$ to $\Delta$ by mapping it to the contraction of $g(\partial \Delta)$ inside $B^{\F}_{\rho((\rho')^{k-1}(3\varepsilon'))}(v)$. This gives an 
    extension to the $k$-skeleton of $X$. By the triangle 
    inequality for every $(k+1)$-cell $\Delta'$ and every vertex $v$ of $\Delta'$
    we have $\partial \Delta' \subset B^{\F}_{\rho((\rho')^{k-1}(3\varepsilon'))+3\varepsilon'}(v)
    \subset B^{\F}_{(\rho')^k(3\varepsilon')}(v)$. This finishes the construction.

    Property $\F(f(x), g(x)) < \varepsilon$ follows by the triangle inequality.
\end{proof}

%

Assuming the same set up as in Lemma \ref{g} we have the following parametric version of ``contracting hair''
Proposition \ref{hair}:

\begin{prop} \label{hair_parametric}
For every $r>0$, there is $\varepsilon >0$ with the following property.  Suppose
\begin{equation}
F:X\to Emb(\Sigma,M),
\end{equation}
satisfies
\begin{equation}\label{isclose2}
\mathbf{F}(F(x),\overline{\mathcal{T}}(\Sigma,M))<\varepsilon.
\end{equation}
Suppose further $F(\partial X)$ consists of embeddings supported in $\overline{\mathcal{T}}(\Sigma,M)$.
Then there exists a continuous map $g:X\to\overline{\mathcal{T}}(\Sigma,M)$ and homotopy $F: [0,1] \times X  \rightarrow Emb(\Sigma,M)$, such that
\begin{enumerate}
 \item $F(0,x) = f(x)$,
 \item $F(1,x) \subset N_r(g(x))$
 \item $F( \star,x)$ is constant for all $x \in \partial X$.
\end{enumerate}
Moreover, for every $x\in\partial X$, $g(x)$ coincides with the non-parametrized oriented
    torus induced by $f(x)$.
\end{prop}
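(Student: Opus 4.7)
The plan is to reduce the parametric statement to the fixed-reference Proposition \ref{hair} by using the isometry action on $M$ to trivialize the varying minimal target along small pieces of parameter space. First I would apply Lemma \ref{g}, with $\varepsilon'$ chosen much smaller than $r$, to produce a continuous $g: X \to \overline{\mathcal{T}}(\Sigma,M)$ with $\mathbf{F}(F(x),g(x))$ as small as desired and satisfying $g(x) = F(x)$ for $x \in \partial X$. This is the map $g$ asserted in the conclusion, and the final clause about $\partial X$ then comes for free as long as the constructed homotopy is stationary there.

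Next I would use that the orbit map $\pi: \mbox{isom}(M) \to \overline{\mathcal{T}}(\Sigma,M)$ is a principal $\mbox{isom}^+(M,\Sigma)$-bundle over the finite-dimensional manifold $\overline{\mathcal{T}}(\Sigma,M)$, together with the fact that $\mathbf{F}$ is preserved by the isometry action. Take a cubical subdivision of $X$ fine enough that for each cell $\Delta$ the image $g(\Delta)$ lies in a trivializing neighborhood of $\pi$; then a continuous local lift $\tilde{g}_\Delta: \Delta \to \mbox{isom}(M)$ with $\pi\circ \tilde{g}_\Delta = g|_\Delta$ exists. Fixing a reference $\Sigma_0 \in \overline{\mathcal{T}}(\Sigma,M)$, the rigid-motion-transformed family $\tilde{F}_\Delta(x) := \tilde{g}_\Delta(x)^{-1} \circ F(x)$ satisfies
\begin{equation*}
\mathbf{F}(\tilde{F}_\Delta(x),\Sigma_0) \;=\; \mathbf{F}(F(x), g(x)) \;\ll\; r.
\end{equation*}
Proposition \ref{hair} applied to $\tilde{F}_\Delta$ with reference $\Sigma_0$ then yields a homotopy of $\tilde{F}_\Delta$ into $N_r(\Sigma_0)$, and pushing forward by $\tilde{g}_\Delta$, which carries tubular neighborhoods to tubular neighborhoods of the image, gives a homotopy of $F|_\Delta$ into $N_r(g(x))$, as required.

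Finally, the local homotopies must be assembled into a single global homotopy. I would run a skeleton-by-skeleton induction mirroring the one in the proof of Proposition \ref{hair}: on $\partial X$ and on the $0$-skeleton the lifts $\tilde{g}$ are fixed and the homotopy is set to be stationary; inductively extending over a $k$-cell $\Delta$, the lift $\tilde{g}_\Delta$ is chosen to agree with the already-constructed lift on $\partial \Delta$ (possible because $\Delta$ is contractible and the fiber of $\pi$ is a homogeneous space), and Proposition \ref{hair} is then applied to $\tilde{F}_\Delta$ with its prescribed boundary data, the hypothesis $\tilde{F}_\Delta(\partial \Delta) \subset N_{r/5}(\Sigma_0)$ being absorbed by a small shrinkage of the target radius at each inductive stage.

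The main obstacle is precisely this global patching: the local lifts $\tilde{g}_\Delta$ need not be globally consistent because the principal bundle $\pi$ need not be trivial. However, this is exactly what forces the skeleton induction rather than a single global application of Proposition \ref{hair}, and the remaining ambiguity in extending the lift from $\partial \Delta$ to $\Delta$ is only up to an element of $\mbox{isom}^+(M,\Sigma_0)$, which acts by isometries preserving $\Sigma_0$ setwise and hence preserves both the hypotheses and the conclusion of Proposition \ref{hair}. This ambiguity therefore does not obstruct the continuous assembly of the parametric homotopy.
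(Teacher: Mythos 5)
Your high-level plan — use Lemma \ref{g} to produce $g$, locally trivialize the variable target by families of isometries close to the identity, contract hair rel a fixed reference $\Sigma_0$ on each small cell, and assemble by a skeleton induction — matches the paper's strategy. The genuine gap is in the assembly step: you cannot invoke Proposition \ref{hair} as a black box cell-by-cell. When you extend from the $(k-1)$-skeleton to a $k$-cell $\Delta$, the homotopy on $\partial\Delta\times[0,1]$ has already been constructed and is \emph{not} stationary (it contracts hair), whereas the homotopy that Proposition \ref{hair} produces on $\Delta$ is stationary on $\partial\Delta$ by its conclusion (2); these cannot agree, so the phrase ``applied to $\tilde F_\Delta$ with its prescribed boundary data'' is precisely the assertion that needs proof, not a consequence of Proposition \ref{hair}. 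Moreover, Proposition \ref{hair} is an existence result with many non-canonical internal choices — the radii $s_i$, the disk families $D^{v}_{(x,t)}$, the order and location of the Alexander surgeries — so two separate applications on adjacent cells need not agree along a shared face even if you could fix the boundary homotopy. What actually forces the cell-by-cell constructions to glue in the paper's argument is the explicit inductive bookkeeping of the families $D^{v(\Delta)}_{(x,t)}$ together with the nesting property (Lemma \ref{nested}): $D^w_x\subset D^v_x$ whenever $s(v)<s(w)$. Your proposal discards exactly this machinery, and shrinking the target radius at each stage addresses a different (and more minor) issue than the one that actually obstructs gluing.

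The paper therefore does not apply Proposition \ref{hair} as a black box; it re-runs the induction from its proof inside the parametric setting, carrying along the families $\Sigma_{(x,t)}$ and $D^{v(\Delta)}_{(x,t)}$ and verifying the four inductive properties, with the single modification that on each cell one first conjugates the family by the local isometry family $\Phi_{v(\Delta)}(x)$ (continuous, $C^2$-close to the identity, with $\Phi_{v(\Delta)}(x)(g(x))=g(v(\Delta))$) and conjugates back afterward. To repair your argument you would need to prove a relative version of Proposition \ref{hair} that extends a prescribed boundary homotopy while respecting the disk data — which amounts to redoing the induction. A smaller point: you say that on the $0$-skeleton ``the homotopy is set to be stationary,'' but for interior vertices $v\notin\partial X$ the embedding $F(v)$ is only $\mathbf{F}$-close to $g(v)$ and may have hair protruding outside $N_r(g(v))$, so the homotopy there must be the (non-stationary) hair contraction of Proposition \ref{hair} at a single point.
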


\begin{proof}
Let map $g$ be as in Lemma \ref{g}. 

Observe that
for every $\delta>0$ there exists $\rho(\delta)$ (which may depend on $F$) with the
following property: For every $x_0 \in X$ there exists a family
of isometries $\Phi_{x_0}: B_{\rho(\delta)}(x_0) \rightarrow\mbox{isom}(M) $ with  $\Phi_{x_0}(B_{\rho(\delta)}(x_0)) \subset B_{\delta}^{C^2}(id)$ of $\mbox{isom}(M)$, such that $\Phi_{x_0}(x)(g(x)) = g(x_0)$.

We now proceed in a way very similar to the proof of Proposition \ref{hair}.
As in that proof, we will construct a family of contractions
by induction on the dimension of the skeleton of a fine subdivision of $X$.
The key difference is that to define the deformation for each small cell of the subdivision we
will first apply a family of isometries $\Phi_v(x)$ (that are close to the identity)
that rotate minimal tori $g(x)$, so that they coincide with some fixed 
minimal torus $g(v)$. After the ``hair contraction'' is defined as in Proposition \ref{hair}
we rotate the family back by applying $\Phi_v(x)^{-1}$.

By compactness we can choose a subdivision of $X$, 
so that for each vertex $v$ there exists a family of surfaces $P_v(x) = \partial N_{s(v)}(g(x))$ with the property that
$f(x)$ intersects $P_v(x)$ transversally
for all $x \in C$, whenever $C$ is a cell of $X$
containing $v$.  As in the proof of Proposition \ref{hair}, by \eqref{isclose2} we may additionally assume that each circle in $f(x)\cap P_v(x)$ bounds a disk in both $f(x)$ and $P_v(x)$.

Let $\{ \Sigma_{(x,0)} \}$ denote the family of surfaces $\{ f(x) \}$.
We will define the desired extension of this family to $\Sigma_{(x,t)} = F(x,t)$, $(x,t) \in X \times [0,1]$.
As in the proof of Proposition \ref{hair},
for each cell $\Delta$ of $X \times [0,1]$ we will inductively define a family 
of surfaces $\Sigma_{(x,t)}$
and a family of embedded disks 
$D^{v(\Delta)}_{(x,t)} \subset \Sigma_{(x,t)}$ continuous in the smooth topology with the following properties:

\begin{enumerate}
    \item $\Sigma_{(x,t)} \cap P_{v(\Delta)}(x) \subset D^{v(\Delta)}_{(x,t)} $;
    \item There exists $\eta_\Delta \in (0, 2^{dim(\Delta)-dim(X)}\eta(v))$, so that for all $(x,t) \in \Delta$ there holds \begin{equation}\Sigma_{(x,t)} \setminus N_{\eta_\Delta}(D^{v(\Delta)}_{(x,t)}) =
    \Sigma_{(x,0)} \setminus N_{\eta_\Delta} (D^{v(\Delta)}_{(x,0)}).\end{equation} 
    \item 
   $\Sigma_{(x,1)} \subset N_{r}(g(x))$
    for all $(x,1) \in \Delta$;
    \item $D^{v(\Delta)}_{(x,t)}  = D^{v(\Delta)}_{(x,0)}$ for $(x,0) \in \Delta \cap (\partial X \times [0,1])$.
\end{enumerate}

As in Proposition \ref{hair}, $v(\Delta)$ is defined as the vertex with 
minimal value of $s(v)$ among all vertices of the projection of $\Delta$
onto $X \times \{ 0\}$.

For $X^{(0)} \times [0,1]$ defining the family of disks as above
amounts to contracting ``hair'' $\Sigma_{(v,0)} \setminus N_{s(v)}(g(v))$
exactly as in the proof of Proposition \ref{hair}.

Fix a cell $C$ and assume that $\Sigma_{(x,t)}$
and $D^{v(C_i)}_{(x,t)}$ were defined for each
cell $C_i$ in the boundary of $C$ and
$(x,t) \in C_i$. Consider a family of surfaces
$$\tilde{\Sigma}_{(x,t)} =\Phi_{v(C)}(x)( \Sigma_{(x,t)}) $$
for $(x,t) \in \partial C$. Exactly as in the proof of Proposition \ref{hair} we can then define a
family of disks $\{ \tilde{D}^{v(C)}_{(x,t)} \}_{(x,t) \in \partial C}$,
so that $\Phi_{v(C)}(x)(\tilde{D}^{v(C_i)}_{(x,t)}) \subset  \tilde{D}^{v(C)}_{(x,t)}$
for all faces $C_i$ in $\partial C$.
We can then define an extension to a family $\{ \tilde{D}^{v(C)}_{(x,t)} \}_{(x,t) \in C}$
and set $D^{v(C)}_{(x,t)}=\Phi_{v(C)}^{-1}(x)(\tilde{D}^{v(C)}_{(x,t)})$.
As in the proof of Proposition \ref{hair} our construction guarantees that
the inductive properties are satisfied.
\end{proof}

Recall that in Proposition \ref{lsargument}
we obtained a relative cubical $k$-cycle $\alpha_i$,
such that for the minimizing sequence $\Phi_i \in \Pi$
and all $i$ large enough we have
$$[(\Phi_i)_*(\alpha_i)]=[(\Phi_i)_*(\alpha)]\in H_k(\mathcal{T},\mathcal{T}_{min}),$$
where $\alpha$ represents the $k$-chain $I^k \times \{0\}$.
Slightly abusing notation we will denote by $\alpha_i$
both the relative cycle and its support in $I^k \times [-1,1]$.
Let $ \Psi_i = \Phi_i|_{\alpha_i}$.
Applying Proposition \ref{hair_parametric} to the family $\{ \Psi_i(x)\}_{x \in \alpha_i}$ we obtain
\begin{thm}[Retracting to Hausdorff neighborhood of minimal tori]\label{hausdorff}
For each small $r>0$,  choosing $i$ large enough, there exists a continuous map
\begin{equation}
\Psi_i: \alpha_i\to \mathcal{T}
\end{equation}
so that
 $[(\Psi_i)_*(\alpha_i)]=H(a)\in H_k(\mathcal{T},\mathcal{T}_{min})$ with the following significance.   There exists a continuous function $g_i:\alpha_i\to \overline{\mathcal{T}}_{min}$ so that for any $b\in \alpha_i$ there holds
\begin{equation}
\Psi_i(b)\subset N_{r}(g_i(b)).
\end{equation}
Moreover, for $b\in\partial \alpha_i$ $g(b)$ is the torus given by $\Psi_i(b)$ (without its parametrization). 
\end{thm}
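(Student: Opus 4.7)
The plan is to combine Proposition \ref{lsargument} (which produces a relative $k$-cycle whose surfaces are $\mathbf{F}$-close to $\overline{\mathcal{T}}_{min}$) with the parametric hair-contraction Proposition \ref{hair_parametric} (which converts $\mathbf{F}$-closeness into Hausdorff closeness). First I would fix a small $r>0$, let $\varepsilon=\varepsilon(r)>0$ be the constant given by Proposition \ref{hair_parametric}, and then apply Proposition \ref{lsargument} with this $\varepsilon$ to obtain, for $i$ sufficiently large, a relative cubical $k$-cycle $\alpha_i$ with $\partial\alpha_i=\partial I^k\times\{0\}$, satisfying
\begin{equation*}
[(\Phi_i)_*(\alpha_i)]=H(a)\in H_k(\mathcal{T},\mathcal{T}_{min})\quad\text{and}\quad \mathbf{F}(\Phi_i(b),\overline{\mathcal{T}}_{min})\leq\varepsilon\ \ \text{for all } b\in\alpha_i.
\end{equation*}

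The key preliminary point is to verify the boundary hypothesis needed to invoke Proposition \ref{hair_parametric}, namely that $\Phi_i|_{\partial\alpha_i}$ lands in $\mathcal{T}_{min}$ (not merely $\mathbf{F}$-close to it). The observation is that $\partial I^k\times\{0\}\subset \partial I^k\times[-1,1]\subset \partial I^{k+1}$. By definition of the saturation $\Pi$, the competitor $\Phi_i$ is obtained from $\tilde\Phi$ by diffeomorphisms which are the identity on $\partial I^{k+1}$; hence $\Phi_i\equiv\tilde\Phi$ on $\partial I^k\times\{0\}$. Since $\tilde\Phi(x,0)=\Phi(x)(T^2)$ and $\Phi(x)\in\mathcal{T}_{min}$ for $x\in\partial I^k$ by construction of the extension in Section \ref{section_main}, the required condition holds.

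Next I would apply Proposition \ref{hair_parametric} with $X=\alpha_i$, $\Sigma=T^2$, and $f=\Phi_i|_{\alpha_i}$. This produces a continuous map $g_i:\alpha_i\to\overline{\mathcal{T}}_{min}$ and a homotopy $F:[0,1]\times\alpha_i\to\mathrm{Emb}(T^2,M)$ with $F(0,\cdot)=\Phi_i|_{\alpha_i}$, $F(1,b)\subset N_r(g_i(b))$, $F(\cdot,b)$ constant for $b\in\partial\alpha_i$, and $g_i(b)$ equal to the unparametrized torus induced by $\Phi_i(b)$ on $\partial\alpha_i$. Set $\Psi_i(b):=F(1,b)$. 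Then $\Psi_i\equiv\Phi_i$ on $\partial\alpha_i$, and $F$ furnishes a homotopy rel $\partial\alpha_i$ between $\Psi_i$ and $\Phi_i|_{\alpha_i}$, so the induced pushforwards agree in $H_k(\mathcal{T},\mathcal{T}_{min})$, giving $[(\Psi_i)_*(\alpha_i)]=H(a)$. The containment $\Psi_i(b)\subset N_r(g_i(b))$ is immediate from the conclusion of Proposition \ref{hair_parametric}, and the boundary identification of $g_i$ with the unparametrized torus of $\Psi_i$ is part of the same conclusion.

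The main obstacle I would expect is the boundary hypothesis verification: one needs $\Phi_i|_{\partial\alpha_i}$ to lie strictly in $\mathcal{T}_{min}$ in order for the ``moreover'' clause of Proposition \ref{hair_parametric} to apply and identify $g_i(b)$ with $\Psi_i(b)$ on $\partial\alpha_i$. The inclusion $\partial I^k\times\{0\}\subset\partial I^{k+1}$ together with the saturation definition takes care of this, but it requires tracking carefully that the cubical boundary of $\alpha_i=R(i)$ computed in Proposition \ref{lsargument} is exactly $\partial I^k\times\{0\}$ and that the rest of the support of $R(i)$ (built from $\partial C_\delta^i$ and the lateral strip $\partial I^k\times[-1,0]$) lies in the region where $\Phi_i$ produces genuine tori rather than the $1$-complexes appearing at $t=\pm 1$.
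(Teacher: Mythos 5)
Your proposal is correct and is essentially the paper's own argument: invoke Proposition \ref{lsargument} with $\varepsilon$ chosen small relative to the $r$-threshold in Proposition \ref{hair_parametric}, then apply the parametric hair-contraction to $\Phi_i|_{\alpha_i}$, taking $\Psi_i$ to be the time-$1$ slice of the resulting homotopy. The paper does not spell out the boundary-hypothesis verification you supply (that $\Phi_i$ agrees with $\tilde\Phi$ on $\partial I^k\times\{0\}\subset\partial I^{k+1}$ since competitors in the saturation are the identity on $\partial I^{k+1}$), but this is exactly the implicit step and your accounting of it is accurate.
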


Since the results of this section are constructed via homotopies,  in the case $k=1$ we obtain from applying Proposition \ref{hair_parametric} to Proposition \ref{k=1case}  the stronger result
\begin{thm}[$k=1$ case]\label{hair_1}
For each small $r>0$,  there is a map  \begin{equation}a':[0,1]\to\mathcal{T}\end{equation} homotopic (rel $\partial [0,1]$) to the representative $a\in\pi_1(\mathcal{T},\mathcal{T}_{min})$ together with a continuous map $g:[0,1]\to\mathcal{\overline{T}}_{min}$ with the property that 
\begin{equation}
a'(t)\subset N_r(g(t))
\end{equation}
for all $t\in [0,1]$.  Furthermore, for $t\in\{0,1\}$, $g(t)$ is the torus given by $a(t)$.
\end{thm}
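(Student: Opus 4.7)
The plan is to directly combine the two main results already in hand: Theorem \ref{k=1case}, which produces a varifold-$\mathbf{F}$-close representative of the homotopy class $a$, and the parametric hair-contraction Proposition \ref{hair_parametric}, which upgrades $\mathbf{F}$-closeness to Hausdorff closeness via an ambient homotopy. The reasoning parallels the derivation of Theorem \ref{hausdorff} from Proposition \ref{lsargument}, specialized to the $1$-dimensional parameter space $X = [0,1]$.

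First, given $r > 0$, I would invoke Proposition \ref{hair_parametric} (with $\Sigma = T^2$ and $X = [0,1]$) to obtain a threshold $\varepsilon = \varepsilon(r) > 0$ such that any map $F: [0,1] \to \mathrm{Emb}(T^2,M)$ with $\mathbf{F}(F(t), \overline{\mathcal{T}}_{min}) < \varepsilon$ and boundary supported in $\overline{\mathcal{T}}_{min}$ can be homotoped (rel endpoints) into Hausdorff-$r$-tubular neighborhoods of minimal tori. Next I would apply Theorem \ref{k=1case} with this $\varepsilon$ to produce a map $a'':[0,1] \to \mathcal{T}$, homotopic to $a$ rel $\partial [0,1]$, satisfying $\mathbf{F}(a''(t), \overline{\mathcal{T}}_{min}) \leq \varepsilon$ for every $t$. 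Because the homotopy between $a$ and $a''$ is rel $\partial[0,1]$ and $a(0), a(1) \in \mathcal{T}_{min}$, the endpoints $a''(0), a''(1)$ agree with $a(0), a(1)$ and lie in $\mathcal{T}_{min} \subset \overline{\mathcal{T}}_{min}$, which verifies the boundary hypothesis of Proposition \ref{hair_parametric}.

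I would then feed $F := a''$ into Proposition \ref{hair_parametric} to extract a continuous $g:[0,1] \to \overline{\mathcal{T}}_{min}$ together with a homotopy $\tilde F:[0,1] \times [0,1] \to \mathrm{Emb}(T^2, M)$ satisfying $\tilde F(0,t) = a''(t)$, $\tilde F(1,t) \subset N_r(g(t))$, and $\tilde F(\cdot, t)$ constant for $t \in \{0,1\}$. Setting $a'(t) := \tilde F(1, t)$ gives a map with $a'(t) \subset N_r(g(t))$, and the constancy of $\tilde F$ at $t \in \{0,1\}$ ensures $a'$ is still homotopic to $a''$ (hence to $a$) rel $\partial[0,1]$. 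The ``moreover'' clause of Proposition \ref{hair_parametric} supplies the final assertion that $g(0), g(1)$ coincide with the unparametrized oriented tori induced by $a(0), a(1)$.

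I do not anticipate a genuine obstacle here: the theorem is stated in the excerpt precisely as the specialization of the already-proved parametric hair-contraction to the $1$-parameter input furnished by Theorem \ref{k=1case}. The only minor care point is bookkeeping with the boundary condition, namely checking that $a''|_{\partial [0,1]}$ automatically lands in $\overline{\mathcal{T}}_{min}$ so that Proposition \ref{hair_parametric} applies without modification; this is immediate from the fact that a representative of $\pi_1(\mathcal{T},\mathcal{T}_{min})$ has its endpoints in $\mathcal{T}_{min}$ by definition and the rel-boundary homotopy from Theorem \ref{k=1case} preserves those endpoints.
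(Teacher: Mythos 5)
Your proposal is correct and follows exactly the paper's own approach: the paper derives Theorem \ref{hair_1} in a single sentence by applying Proposition \ref{hair_parametric} to the output of Theorem \ref{k=1case}, which is precisely the chain of reasoning you carry out. The boundary-condition bookkeeping you flag is the only point needing care, and you handle it correctly.
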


\section{Retracting tori in $T^2\times[-r,r]$}\label{section_finalretraction}
In this section, we complete the proof of Claim \ref{mainclaim}  and Claim \ref{mainclaim1} and thus Theorem \ref{main} by demonstrating: 
\begin{thm}\label{finalretract}
There holds:
\begin{enumerate}
\item  For large $i$,  the relative cycle $\Psi_i(\alpha_i)\in H_k(\mathcal{T},\mathcal{T}_{min})$ constructed in Theorem \ref{hausdorff} represents the trivial class in $H_k(\mathcal{T},\mathcal{T}_{min})$.   
\item For $k=1$, the map $a':[0,1]\to\mathcal{T}$ constructed in Theorem \ref{hair_1} is homotopic rel $\partial([0,1])$ to the trivial map in $\pi_1(\mathcal{T},\mathcal{T}_{min})$.  
\end{enumerate}
\end{thm}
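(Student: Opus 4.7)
The plan is to produce a continuous deformation $\{\Psi_i^s(b)\}_{s\in[0,1]}$ of parametrized embeddings with $\Psi_i^0 = \Psi_i$, $\Psi_i^1(b)\in\mathcal{T}_{min}$, and $\Psi_i^s|_{\partial\alpha_i} = \Psi_i|_{\partial\alpha_i}$. The cylinder $\alpha_i\times[0,1]$ under $(b,s)\mapsto\Psi_i^s(b)$ then furnishes the $(k+1)$-chain realizing the null-homology of $(\Psi_i)_*(\alpha_i)$ in $H_k(\mathcal{T},\mathcal{T}_{min})$; in the $k=1$ setting, the same construction applied to the path $a'$ of Theorem~\ref{hair_1} yields a null-homotopy rel $\partial[0,1]$. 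The key point is that each $\Psi_i(b)$ is by Theorem~\ref{hausdorff} an embedding into the tubular neighborhood $N_r(g_i(b))\cong T^2\times[-r,r]$, which is a Haken $3$-manifold, so the retraction in the fiber is governed by the Hatcher--Ivanov theory of Haken pieces.

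\textbf{Step 1 (Trivializing the family of tubes).} Since $\mbox{isom}(M)$ acts transitively on each connected component of $\overline{\mathcal{T}}_{min}$ with closed stabilizer (cf.\ Subsection~\ref{spaceoftori}), one may cover $\alpha_i$ by finitely many open sets $\{U_\beta\}$ equipped with continuous maps $\Phi_\beta:U_\beta\to\mbox{isom}(M)$ sending $g_i(b)$ to a fixed reference minimal torus $S_\beta$. Via the normal exponential map we identify $N_r(S_\beta)\cong T^2\times[-r,r]$, and $b\mapsto\Phi_\beta(b)\circ\Psi_i(b)$ becomes a continuous family of embeddings of $T^2$ into the fixed Haken manifold $T^2\times[-r,r]$, each representing (together with the core) the generator of $H_2(T^2\times[-r,r])$.

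\textbf{Step 2 (Fiberwise retraction via Hatcher--Ivanov).} In $T^2\times[-r,r]$, the space of smooth embeddings of $T^2$ homologous to the core deformation retracts onto the space of embeddings whose image is the core $T^2\times\{0\}$ (with varying parametrization in $\mbox{Diff}(T^2)$). This is a consequence of the contractibility of $\mbox{Diff}(T^2\times I,\partial)$ and the parametric isotopy extension theorem, both due to Hatcher and Ivanov in their treatment of the Smale conjecture for Haken manifolds (\cite{H2}, \cite{I2}). Applying this fiberwise on each $U_\beta$ and gluing the local deformations across overlaps using a cell-by-cell induction on a fine cubulation of $\alpha_i$ (exactly in the spirit of the constructions in Section~\ref{section_hair}) yields a global continuous homotopy $\Psi_i^s(b)$ with $\Psi_i^1(b)$ a parametrization of $g_i(b)$, hence an element of $\mathcal{T}_{min}$.

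\textbf{Step 3 (Parametrizations, boundary, Goeritz ambiguity).} By Theorem~\ref{hausdorff}, on $\partial\alpha_i$ the embedding $\Psi_i(b)$ already lies in $\mathcal{T}_{min}$ and its image is precisely $g_i(b)$, so the fiberwise retraction can be taken constant there; thus $\Psi_i^s|_{\partial\alpha_i} = \Psi_i|_{\partial\alpha_i}$. Because $s\mapsto\Psi_i^s(b)$ is a continuous path in $\mathcal{T}=\mbox{Emb}_0(T^2,M)$, the endpoint $\Psi_i^1(b)$ automatically lands in the correct component of $\mathcal{T}_{min}$. For $M\neq\mathbb{RP}^3$, Proposition~\ref{goeritz} says the oriented Goeritz group is trivial and no further ambiguity arises; for $M=\mathbb{RP}^3$, the nontrivial generator $\tau$ of the Goeritz group is realized by an isometry (cf.\ the discussion following Proposition~\ref{unparam}), so the ambiguity is absorbed into $\mathcal{T}_{min}$ itself, and the continuous pinning on $\partial\alpha_i$ propagates a single consistent choice throughout $\alpha_i$.

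\textbf{Main obstacle.} The principal technical difficulty is Step 2: organizing the Hatcher--Ivanov retraction parametrically over $\alpha_i$, compatibly with the local trivializations $\Phi_\beta$ and with the pinned boundary on $\partial\alpha_i$. This reduces to an inductive extension argument along cells of a fine cubical subdivision, using at each step the contractibility of the relevant relative diffeomorphism group on $T^2\times[-r,r]$ to extend a partially defined homotopy. Once this parametric retraction is in hand, assertion (1) follows immediately (the cylinder $\alpha_i\times[0,1]$ is the required null-homology), and assertion (2) for $k=1$ follows by the identical construction applied to the path $a'$ from Theorem~\ref{hair_1}.
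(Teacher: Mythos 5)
Your proposal follows essentially the same strategy as the paper's proof: locally trivialize the family of tubular neighborhoods by isometries, retract fiberwise in the Haken piece $T^2\times[-r,r]$ using Hatcher's contractibility results for Haken $3$-manifolds, and glue via cell-by-cell induction on a cubulation of $\alpha_i$, resolving the boundary/component ambiguity with the Goeritz group computation. The paper packages the fiberwise step through the fibration of partially free collars $C_{free}\to \mbox{Emb}_0(T^2,N_r(C_q))$ and Proposition~\ref{retract}, whereas you invoke the contractibility of $\mbox{Diff}(T^2\times I,\partial)$ directly, but the underlying input and the inductive gluing are the same.
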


The proof of Theorem \ref{finalretract} is based on techniques of Ivanov-Hatcher (\cite{I}, \cite{H2}) in the Haken case.   In particular,  it uses Hatcher's proof of the Smale conjecture for $\mathbb{S}^3$.

\subsection{Retracting to a single minimal torus}
Fix $q\in\mathcal{T}_{min}$.   Then by Lemma \ref{equal2},  $q: T^2\to M$ is given by $I_0\circ f_0$ for a unique isometry $I_0\in\mbox{isom}(M)$. (Recall that
$\{ f_t\}_{t \in (-1,1)}$ is the family of embeddings defined in Section \ref{section_main}.) Let $C_q\subset M$ denote the image of the embedding $q$ in $M$.   The torus $C_q$ inherits an orientation from the embedding $q$.    Henceforth we will assume that $C_q$ has been so oriented.  

For $r$ sufficiently small,  there is a diffeomorphism $D$ 
\begin{equation}D:T^2\times[-r,r]\to N_r(C_q),\end{equation}
with $D|_{T^2 \times  \{0\} } = q$.
For $-r\leq a\leq b\leq r$,  let $N_{a,b}(C_q)$ denote $D(C_q\times [a,b])$.  

As before, let $\mathcal{T}_{min}(q)=\mbox{isom}^+(M,q)$ denote the subspace of $\mathcal{T}_{min}$ consisting of isometries $I\in\mbox{isom}(M)$ so that \begin{enumerate}
\item the image of $I\circ f_0$ coincides with $C_q$; 
\item the induced orientations on $C_q$ coincide.    
\end{enumerate}
For each lens spaces $M\neq \mathbb{RP}^3$, $\mathcal{T}_{min}(q)$ is a torus.  For $M=\mathbb{RP}^3$ (recalling the discussion in Section \ref{spaceoftori}) $\mathcal{T}_{min}(q)$ consists of two tori. 

Let us denote by $\partial^-N_r(C_q)$ the boundary component of $N_r(C_q)$ which coincides with the torus 
$D(C_q \times \{-r\})$.

Let $\mbox{Emb}_0(T^2,N_r(C_q))$ denote the subset of embeddings $\mbox{Emb}_0(T^2,M)$ 
whose images are contained and homologically non-trivial in $N_r(C_q)$.   The goal of this section is to show that the inclusion \begin{equation}\iota:\mathcal{T}_{min}(q)\to \mbox{Emb}_0(T^2,N_r(C_q))\end{equation} is a homotopy equivalence on each path component.

Let $C_{free}$ denote the space of embeddings \begin{equation}e:T^2\times [-r,0]\to N_r(C_q)\end{equation} so that $e(T^2\times\{-r\})
= \partial N^-_r(C_q)$ and satisfies $e|_{T^2\times\{0\}} \in \mbox{Emb}_0(T^2,N_r(C_q))$.  The space $C_{free}$ denotes the space of ``partially free" collars - i.e.  collar neighborhoods where the lower side is free to slide.  There is a natural fibration
\begin{equation}
F \to C_{free}\to\mbox{Emb}_0(T^2,  N_r(C_q)).
\end{equation}
The map $p$ from $C_{free}$ is given by restriction $e(*,0):T^2\to N_r(C_q)$. From the theory of normal surfaces and Alexander's theorem \cite{hatcher2007notes}, every
torus in $\mbox{Emb}_0(T^2,  N_r(C_q))$ is isotopic to each of the boundary components of $N_r(C_q)$.
By the isotopy extension theorem, the region in $N_r(C_q)$ on the ``bottom" side of a torus in $\mbox{Emb}_0(T^2,  N_r(C_q))$ is diffeomorphic to $T^2\times [-r,0]$ and thus the map $p$ is surjective.

The fiber $p^{-1}(q)$ is given by $\mbox{Diff}_{free}(N_{-r,0}(C_q))$, which denotes the space of diffeomorphisms that fix $D(T^2\times\{0\})$ (i.e. ``the top face" of $N_{-r,0}(C_q)$) pointwise.  By Lemma 2 in \cite{H2},   $\mbox{Diff}_{free}(N_{-r,0}(C_q))$ is contractible.\footnote{This step uses the Smale conjecture for $\mathbb{S}^3$.}   Since the fiber of the fibration is contractible, it follows from the long exact sequence of the fibration and Whitehead's theorem that \begin{equation}\label{eq}p:C_{free}\to \mbox{Emb}_0(T^2,  N_r(C_q))\end{equation}
is a homotopy equivalence.   

We now consider a distinguished subspace of $C_{free}$.   Let $C_{min}(q)\subset C_{free}$ denote the collars satisfying
\begin{equation}
e(x,t) = I\circ I_0\circ f_t(x) \mbox{ for some } I\in\mathcal{T}_{min}(q).
\end{equation}
 Note that $p$ restricted to $C_{min}(q)$ maps homeomorphically onto $\mathcal{T}_{min}(q)$. 
Thus we have a commutative diagram (with $\iota_1$ and $\iota_2$ the inclusion maps):
\[
\begin{tikzcd}\label{commutative2}
C_{min}(q) \arrow[r, "\iota_1"] \arrow[d, "p_1"]
& C_{free} \arrow[d, "p" ] \\
\mathcal{T}_{min}(q) \arrow[r, "\iota_2"]
& |[rotate=0]| \mbox{Emb}_0(T^2,N_r(C_q))
\end{tikzcd}
\]

By Theorem \ref{basichomotopy} we obtain that $p$ induces isomorphisms on relative homotopy groups:
\begin{equation}\label{collars}
\pi_k(C_{free},C_{min}(q))\cong \pi_k(\mbox{Emb}_0(T^2,N_r(C_q)), \mathcal{T}_{min}(q)).  
\end{equation}
for all $k$.

Given a family of tori,  we obtain from \eqref{collars} a corresponding family of collars.   There is then a natural way to retract the tori onto a single standard one (up to rotation) by ``sliding along the collar."  

        We have the following: 

\begin{prop}[Retraction through collars]\label{retract}

Fix $k\geq 0$.  Suppose $f:D^{k}\to \mbox{Emb}_0(T^2,N_r(C_q))$ with $f|_{\partial D^{k}}$ contained in $\mathcal{T}_{min}(q)$.   Then $f$ is homotopic rel $\partial D^{k}$ to a map $g:D^{k}\to \mbox{Emb}_0(T^2,N_r(C_q))$ whose image consist of embeddings each contained in $\mathcal{T}_{min}(q)$.  

Thus \begin{equation}\label{embeddings}
 \pi_k(\mbox{Emb}_0(T^2,N_r(C_q)), \mathcal{T}_{min}(q))=0, 
\end{equation}
for all $k$,  which implies that the inclusion
\begin{equation}\label{ishomotopy}
\iota_2:\mathcal{T}_{min}(q)\to \mbox{Emb}_0(T^2,N_r(C_q))\end{equation} is a homotopy equivalence.  
\end{prop}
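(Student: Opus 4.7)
The plan is to lift the problem to the space of collars via the commutative diagram \eqref{commutative2} and then construct a retraction using a bottom-face fibration structure on $C_{free}$. By the homotopy equivalence \eqref{collars}, it suffices to show $\pi_k(C_{free}, C_{min}(q)) = 0$: I would lift $f$ to a map $\tilde{f}: D^k \to C_{free}$ with $\tilde{f}|_{\partial D^k} \subset C_{min}(q)$ using the homotopy lifting property, construct a homotopy rel $\partial D^k$ from $\tilde{f}$ to a map into $C_{min}(q)$, and project via $p$ to obtain the required retraction of $f$.

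To construct the homotopy in $C_{free}$, I would consider the restriction map $r: C_{free} \to \mbox{Emb}(T^2, \partial N^-_r(C_q))$ sending $e \mapsto e(\cdot, -r)$. By the isotopy extension theorem, $r$ is a Serre fibration. Its fiber $r^{-1}(P)$ over a parametrization $P$ consists of embeddings of $T^2 \times [-r, 0]$ into $N_r(C_q)$ extending $P$ with free top face in $\mbox{Emb}_0$, which I expect to be contractible by applying Hatcher's theorem (Theorem \ref{first}) to a handlebody decomposition of $N_r(C_q) \setminus P(T^2)$, analogous to the proof that $\mbox{Diff}_{free}(N_{-r,0}(C_q))$ is contractible above. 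Hence $r$ is a weak homotopy equivalence onto the components of its image, and the problem reduces to retracting $r \circ \tilde{f}(D^k)$ onto $r(C_{min}(q))$ in the base.

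Next I would analyze the base. The image $r(C_{min}(q))$ is the $\mathcal{T}_{min}(q)$-orbit of the reference parametrization $I_0 \circ f_{-r}$. For $M \neq \mathbb{RP}^3$, elements of $\mathcal{T}_{min}(q)$ act on $\partial N^-_r(C_q) \cong T^2$ by translations, making $r(C_{min}(q))$ a translation torus sitting inside the identity component of $\mbox{Diff}(T^2, \partial N^-_r(C_q))$. For $M = \mathbb{RP}^3$, the involution $\tau$ described in Subsection \ref{spaceoftori} contributes a second translation torus living in the $-I$ component of $\mbox{Diff}(T^2)$. Using the classical identification $\mbox{Diff}(T^2) \simeq T^2 \times GL(2, \mathbb{Z})$, each connected component of $\mbox{Diff}(T^2, \partial N^-_r(C_q))$ deformation retracts onto its translation orbit, and so $r(C_{min}(q))$ is a deformation retract of the components it meets.

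Finally, since $\tilde{f}(D^k)$ is connected with $\tilde{f}(\partial D^k) \subset C_{min}(q)$, the image $r \circ \tilde{f}(D^k)$ lies entirely in the components of the base met by $r(C_{min}(q))$; applying the base deformation retraction to $r \circ \tilde{f}$ and lifting via the homotopy lifting property of $r$ yields the desired homotopy of $\tilde{f}$ into $C_{min}(q)$ rel $\partial D^k$, and projecting via $p$ completes the argument. The hard part will be the component-matching step in the $\mathbb{RP}^3$ case: verifying that lifts of $\mbox{Emb}_0$ only reach the two components of $\mbox{Diff}(T^2, \partial N^-_r(C_q))$ covered by $r(C_{min}(q))$ requires combining the isometry classification from Subsection \ref{spaceoftori} with the identification of $\tau$ as the nontrivial Goeritz generator acting as $-I \in GL(2, \mathbb{Z})$ from Proposition \ref{goeritz}.
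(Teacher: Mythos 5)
Your overall strategy is close to the paper's: lift via the fibration $p:C_{free}\to\mbox{Emb}_0(T^2,N_r(C_q))$ to replace the problem by one about collars, reduce the collar family to its restriction on the bottom face $\partial^-N_r(C_q)$, and then finish by retracting inside $\mbox{Diff}(T^2)\simeq T^2\times \mbox{GL}(2,\mathbb{Z})$ onto the translation torus(es). The difference is that you package the ``ignore the guts of the collar'' step as a second Serre fibration $r:C_{free}\to\mbox{Emb}(T^2,\partial^- N_r(C_q))$ with contractible fiber, whereas the paper does this step by an explicit deformation of the collar: after normalizing so that $\tfrac{\partial \overline{e}_x^2}{\partial\rho}(y,-r)=1$, they apply the stretching homotopy
\begin{equation*}
\overline{e}_{(x,t)}(y,\rho) = \Big(e_x^1 (y, t(\rho+r)-r), \tfrac{\overline{e}_x^2 (y, t(\rho+ r) - r) +r}{t}-r \Big),
\end{equation*}
which as $t\to 0$ pushes every collar with fixed bottom face onto the straight product collar $(e_x^1(y,-r),\rho)$. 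That construction is precisely what shows the fiber $r^{-1}(P)$ is contractible, so the conclusion you want is correct, but your justification for it is not.

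Concretely, the gap is your claim that $r^{-1}(P)$ is contractible ``by applying Hatcher's theorem (Theorem \ref{first}) to a handlebody decomposition of $N_r(C_q)\setminus P(T^2)$.'' Since $P(T^2)=\partial^- N_r(C_q)$ is a boundary component, $N_r(C_q)\setminus P(T^2)$ is $T^2\times(-r,r]$, which is neither a handlebody nor a handlebody decomposition, and Theorem \ref{first} (which concerns $\mbox{Diff}_p(N,\partial N)$ for a handlebody $N$) does not apply. Note also that $r^{-1}(P)$ is not a diffeomorphism group: it fibers over $\mbox{Emb}_0(T^2,N_r(C_q))$ with fiber $\mbox{Diff}_p(T^2\times I,\partial)$, which is \emph{not} contractible (it is homotopy equivalent to $\mathbb{Z}^2$). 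The contractibility of $r^{-1}(P)$ holds because of the connecting homomorphism in that fibration, but deducing it that way would be circular (it presupposes the homotopy type of $\mbox{Emb}_0$), which is why an explicit construction like the paper's stretch is needed here. Two further, smaller issues: (1) after applying the homotopy lifting property of $r$ the terminal stage of the lifted homotopy lies in $r^{-1}(r(C_{min}(q)))$, not in $C_{min}(q)$ itself, so you need one more fiberwise retraction; (2) the Goeritz-group input (Proposition \ref{goeritz}) is needed for \emph{all} lens spaces to place the bottom-face parametrization in the correct component of $\mbox{Diff}(T^2)$ — you flag only $\mathbb{RP}^3$ as hard, but the paper's $k=0$ surjectivity argument uses the triviality of the Goeritz group even when $M\neq\mathbb{RP}^3$.
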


\begin{proof}
Let us first show \eqref{embeddings} for $k=0$.  In other words, we must show $\iota_2$ is surjective on path components. Given $E\in \mbox{Emb}_0(T^2,N_r(C_q))$, let $c\in C_{free}$, be such that $p(c) = E$. The collar $c$ gives a path of embeddings $\{c(*,s)\}_{s\in [-r,0]}$ (parameterized backwards) beginning at $E=c(*,0)$ and ending at an embedding $\overline{E}=c(*,-r)$ with the property that $\overline{E}:T^2\to\partial^-N_r(C_q)$.  We may then move $\overline{E}$ normally to an embedding $\tilde{E}:T^2\to C_q$. Given $I\in\mbox{isom}^+(M,q)$, we thus have $f_0^{-1}\circ I^{-1}\circ \tilde{E}\in\mbox{Diff}(T^2)$.

If $M\neq\mathbb{RP}^3$, then since the Goeritz group $G_0(M,C_q)$ is trivial (Proposition \ref{goeritz}), it follows that $f_0^{-1}\circ I^{-1}\circ \tilde{E}$ is contained in $\mbox{diff}(T^2)$ (i.e. the identity component of $\mbox{Diff}(T^2)$). Thus we may homotope $\tilde{E}$ to an element of $\mathcal{T}_{min}(q)$ through embeddings $T^2\to C_q$.  It follows that $\iota_2$ is surjective on path components in this case.

If $M=\mathbb{RP}^3$, choose $I_1$ and $I_2$ in the two connected components of $\mbox{isom}^+(M,q)$.  Then $f_0^{-1}\circ I_1^{-1}\circ \tilde{E}$ and $f_0^{-1}\circ I_2^{-1}\circ\tilde{E}$ lie in distinct components of $\mbox{Diff}(T^2)$.  Recall
\begin{equation}\mbox{Diff}(T^2)\simeq T^2\times\mbox{GL}(2,\mathbb{Z})\end{equation}
Since $G_0(\mathbb{RP}^3, C_q)=\mathbb{Z}_2$ (Proposition \ref{goeritz}), exactly one of these maps lies in the identity component $\mbox{diff}(T^2)\subset \mbox{Diff}(T^2)$
corresponding to $I\in\mbox{GL}(2,\mathbb{Z})$ and the other lies in the component corresponding to $-I\in\mbox{GL}(2,\mathbb{Z})$.  Thus we may further homotope $\tilde{E}$ to $\mathcal{T}_{min}(q)$ as above after choosing $i\in\{1,2\}$ so that $f_0^{-1}\circ I_i^{-1}\circ\tilde{E}\in\mbox{diff}(T^2)$.  This completes the proof that \eqref{embeddings} holds for $k=0$.

By \eqref{collars} it remains to show that $\pi_k(C_{free},C_{min}(q)) =0$
for all $k\geq 1$.   
Let $h:D^{k+1}\to C_{free}$ be a map with $h(\partial D^{k+1}) \subset C_{min}(q)$.  In the case of $k=1$, and $M=\mathbb{RP}^3$, since the Goeritz group of $T^2\times [-1,1]$ is trivial, both endpoints of $h(D^1)$ lie in the same component of $C_{min}(q)$. If $M\neq\mathbb{RP}^3$, the space $C_{min}(q)$ is connected.  

Let 
\begin{equation}
D(h(x)) = e_x: T^2 \times [-r,0] \rightarrow T^2 \times [-r,r],\end{equation}
where we may express in coordinates
\begin{equation}
e_x(y, \rho) =(e_x^1 (y, \rho), e_x^2 (y, \rho)).\end{equation}
First, we deform $\{(e_x^1 (y, \rho), e_x^2 (y, \rho)) \}$
to a family $\{(e_x^1 (y, \rho), \overline{e}_x^2 (y, \rho)) \}$,
satisfying $\frac{\partial \overline{e}_x^2}{\partial \rho} (y,-r) = 1 $.
Note that 
\begin{equation}\label{beg}
\overline{e}_x^2(y,-r) = -r\mbox{ for all } y\in T^2.
\end{equation}

We then define a homotopy (for $t\in [0,1]$)
$$\overline{e}_{(x,t)}(y,\rho) = \Big(e_x^1 (y, t(\rho+r)-r), \frac{\overline{e}_x^2 (y, t(\rho+ r) - r) +r}{t}-r \Big)$$
For $t =1$ we have $\overline{e}_{(x,1)}(y,\rho)= \overline{e}_x$ and by \eqref{beg} and L'Hospital's rule for $t \rightarrow 0$ we have
$$\overline{e}_{(x,0)}(y,\rho) = (e_x^1 (y,-r), \rho)$$
Define the homotopy \begin{equation}
h(x,t)=D^{-1}(\overline{e}_{(x,1-t)}).
\end{equation}
Note that this homotopy preserves the boundary condition $h(\partial D^{k+1},t) \subset C_{min}(q)$ for all $t$.
Finally, since each component of $\mbox{Diff}(T^2)$ retracts onto $C_{min}(q)$ (and thus the relative homotopy groups vanish), we can homotope the family
$\{ h(x,1)\}_{x \in D}$ to a family in $ C_{min}(q)$ (rel $\partial D$).
\end{proof}

\subsection{Retraction to a family of minimal tori}

Recall from Theorem \ref{hausdorff} that choosing $r$ small we have a map $\Psi_i: \alpha_i\to \mathcal{T}$ and $g_i:\alpha_i\to \mathcal{\overline{T}}_{min}$ so that
 \begin{equation}
\Psi_i(b)\subset N_r(g_i(b))
\end{equation}
for each $b\in \alpha_i$.   Furthermore,  for $b\in\partial \alpha_i$ we have
\begin{equation}
\Psi_i(b)\in\mathcal{T}_{min}.
\end{equation}




We have the main result of this section which we restate
\begin{thm}\label{finalretract1}
There holds:
\begin{enumerate}
\item  For large $i$,  the relative cycle $\Psi_i(\alpha_i)\in H_k(\mathcal{T},\mathcal{T}_{min})$ constructed in Theorem \ref{hausdorff} represents the trivial class in $H_k(\mathcal{T},\mathcal{T}_{min})$.   
\item For $k=1$, the map $a':[0,1]\to\mathcal{T}$ constructed in Theorem \ref{hair_1} is homotopic rel $\partial([0,1])$ to a trivial map in $\pi_1(\mathcal{T},\mathcal{T}_{min})$.  
\end{enumerate}
\end{thm}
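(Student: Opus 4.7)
The plan is to apply Proposition \ref{retract} parametrically over $\alpha_i$, in direct analogy with how Proposition \ref{hair} is upgraded to Proposition \ref{hair_parametric}. The essential input is that Proposition \ref{retract} produces, for a single $q\in\mathcal{T}_{min}$, a homotopy rel boundary retracting any family in $\mbox{Emb}_0(T^2,N_r(C_q))$ whose boundary already lies in $\mathcal{T}_{min}(q)$ to a family entirely in $\mathcal{T}_{min}(q)$. Since $\overline{\mathcal{T}}_{min}$ is homogeneous under the $\mbox{isom}(M)$-action and $g_i:\alpha_i\to\overline{\mathcal{T}}_{min}$ is continuous, we can locally trivialize by conjugating with isometries close to the identity, reducing the global problem to cell-by-cell applications of Proposition \ref{retract}.

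Concretely, I would fix a fine cubical subdivision of $\alpha_i$ so that, for each cell $C$ with a distinguished vertex $v(C)$, there is a continuous family $\Phi^C_b\in\mbox{isom}(M)$ for $b\in C$, close to the identity, with $\Phi^C_{v(C)}=\mbox{id}$ and $\Phi^C_b(g_i(b))=g_i(v(C))$. The conjugated family $\{\Phi^C_b\circ\Psi_i(b)\}_{b\in C}$ then lies in $\mbox{Emb}_0(T^2,N_r(C_{g_i(v(C))}))$, bringing us into the setting of Proposition \ref{retract}. Proceeding inductively on the skeleta of the subdivision, I would extend the homotopy cell by cell: on $\partial C$ we already have (by induction) a family in $\mathcal{T}_{min}$, which conjugates into $\mathcal{T}_{min}(g_i(v(C)))$ and admits an extension across $C$ by Proposition \ref{retract}; conjugating back by $(\Phi^C_b)^{-1}$ lands us again in $\mathcal{T}_{min}$, since $\mbox{isom}(M)$ preserves $\mathcal{T}_{min}$. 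On $\partial\alpha_i$ the initial map is already in $\mathcal{T}_{min}$, so the homotopy can be taken to fix $\partial\alpha_i$ pointwise. The output is a homotopy of $\Psi_i$ to a map $\widetilde{\Psi}_i:\alpha_i\to\mathcal{T}_{min}$.

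The main technical obstacle is the $M=\mathbb{RP}^3$ case, where $\mathcal{T}_{min}(q)$ has two components indexed by the Goeritz element $\tau\in G_0(\mathbb{RP}^3,\Sigma)=\mathbb{Z}_2$, and one must consistently select the correct component on each cell. This is handled as in the proof of Proposition \ref{retract}: the boundary data $\Psi_i|_{\partial\alpha_i}\subset\mathcal{T}_{min}$ determines a preferred component via the partially free collars in $C_{free}$, and this choice propagates continuously across the cubical subdivision because the local isometries $\Phi^C_b$ move within the identity component of $\mbox{isom}(\mathbb{RP}^3)$. Granting $\widetilde{\Psi}_i$, part (1) follows: the homotopy gives $[(\Psi_i)_*(\alpha_i)]=[(\widetilde{\Psi}_i)_*(\alpha_i)]$ in $H_k(\mathcal{T},\mathcal{T}_{min})$, and the latter is trivial because $\widetilde{\Psi}_i(\alpha_i)\subset\mathcal{T}_{min}$ so the relative cycle is supported in the subspace. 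For part (2), the same parametric retraction applied to $a'$ produces a homotopy rel endpoints from $a'$ to a path lying entirely in $\mathcal{T}_{min}$, which represents the identity element of $\pi_1(\mathcal{T},\mathcal{T}_{min})$. I expect the bookkeeping for the inductive matching across cells (and, for $\mathbb{RP}^3$, the consistent component selection) to be the most delicate step, though it is essentially the same bookkeeping carried out in Proposition \ref{hair_parametric}.
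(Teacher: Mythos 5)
Your proposal follows the paper's proof of Theorem \ref{finalretract} (via Proposition \ref{finalretractprop}) essentially verbatim: reduce to Proposition \ref{retract} by conjugating with isometries that trivialize the varying minimal torus $g_i(b)$ over each cell of a cubical subdivision of $\alpha_i$, and induct over skeleta rel $\partial\alpha_i$. The only cosmetic difference is that the paper implements the conjugation by lifting $g_i$ through the fibration $\mbox{isom}(M)\to\overline{\mathcal{T}}_{min}$ over the contractible set $U=\Delta\cup(\partial\Delta\times[0,1])$, rather than via your explicit local families $\Phi^C_b$, but the mechanism and the $\mathbb{RP}^3$ component bookkeeping are the same.
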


The proof is an immediate corollary of the following.  

\begin{prop} \label{finalretractprop}
There exists a homotopy $F: [0,1] \times \alpha_i  \rightarrow \mathcal{T}$, such that 
\begin{enumerate}
\item $F(0,x) = \Psi_i(x)$ 
\item $F(1,x) \subset \mathcal{T}_{min}$.
\item For $x\in\partial\alpha_i$,  $F(t,x)=\Psi_i(x)$ for all $t\in [0,1]$.
\end{enumerate}
\end{prop}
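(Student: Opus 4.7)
\medskip

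\noindent\textbf{Proof plan for Proposition \ref{finalretractprop}.}

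The strategy is to run a parametric version of Proposition \ref{retract}, trivializing the family of ``target'' minimal tori $g_i$ locally by ambient isometries, exactly in the spirit of the passage from Proposition \ref{hair} to its parametric version Proposition \ref{hair_parametric}. The goal is to extend the map on $(\alpha_i\times\{0\})\cup(\partial\alpha_i\times[0,1])\to \mathcal{T}$ (which is already given: namely $\Psi_i$ on the bottom, and constantly $\Psi_i(x)\in\mathcal{T}_{min}$ along $\partial\alpha_i\times[0,1]$) to all of $\alpha_i\times[0,1]$ so that $F(1,x)\in\mathcal{T}_{min}$. Since $\overline{\mathcal{T}}_{min}$ is a finite-dimensional manifold, for each $\delta>0$ and each $x_0\in\alpha_i$ there exists a $C^2$-neighborhood $U(x_0)$ and a continuous family $\Phi_{x_0}:U(x_0)\to\mathrm{isom}(M)$, $\delta$-close to the identity, so that $\Phi_{x_0}(x)\cdot g_i(x)=g_i(x_0)$; moreover, for $\delta$ small relative to $r$, conjugating by $\Phi_{x_0}$ sends $N_r(g_i(x))$ into $N_{2r}(g_i(x_0))$ while keeping families of tori inside tubular neighborhoods.

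Next, I would fix a cubical subdivision of $\alpha_i$ fine enough that each cell $\Delta$ lies inside some $U(v(\Delta))$ (where $v(\Delta)$ is a distinguished vertex of $\Delta$), and I would build the homotopy $F$ by induction on the skeleton of $\alpha_i\times[0,1]$. For a typical cell $\Delta\times[0,1]$ in the interior, assume $F$ has been defined on the boundary in a way compatible with the inductive hypothesis. Apply the local trivialization $\Phi_{v(\Delta)}$ to push the entire family on $\partial(\Delta\times[0,1])$ into a tubular neighborhood of the \emph{fixed} minimal torus $C_q:=g_i(v(\Delta))$. In this setting the problem reduces exactly to what is resolved in Proposition \ref{retract}: we have a map $\partial(\Delta\times[0,1])\to\mathrm{Emb}_0(T^2,N_r(C_q))$ with the part coming from $\{1\}\times\Delta$ (by induction) already in $\mathcal{T}_{min}(C_q)$, and Proposition \ref{retract} (combined with the homotopy equivalence $\iota_2$ and the collar construction \eqref{collars}) allows us to extend this map over $\Delta\times[0,1]$ so that the top face lands in $\mathcal{T}_{min}(C_q)$. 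Pulling back by $\Phi_{v(\Delta)}^{-1}$ gives the desired extension over $\Delta\times[0,1]$, since isometries preserve $\mathcal{T}_{min}$.

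The main obstacle is bookkeeping at two places. First, one must verify that the inductive hypothesis is compatible with the trivializations from neighboring cells: for $\Delta'\subset\partial\Delta$ one has used $\Phi_{v(\Delta')}$ rather than $\Phi_{v(\Delta)}$, and the retraction to $\mathcal{T}_{min}(g_i(v(\Delta')))$ must be interpretable inside $\mathcal{T}_{min}(g_i(v(\Delta)))$ after applying $\Phi_{v(\Delta)}\Phi_{v(\Delta')}^{-1}$; since both trivializations are close to the identity in $\mathrm{isom}(M)$ and $\mathcal{T}_{min}$ is invariant under all isometries, this amounts to choosing the subdivision fine enough and at each stage picking the path-component of $\mathcal{T}_{min}(C_q)$ dictated by the boundary data. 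Second, in the $M=\mathbb{RP}^3$ case, $\mathcal{T}_{min}(q)$ has two components (generated by the Goeritz element $\tau$ via Proposition \ref{goeritz}) and one must check that the component picked by the inductive step on $\partial\Delta$ is the correct one so that the extension exists; this is exactly the content of the Goeritz-group computation used in the surjectivity argument of Proposition \ref{retract}. The boundary condition $F(t,x)=\Psi_i(x)$ for $x\in\partial\alpha_i$ is maintained automatically throughout the induction because on cells meeting $\partial\alpha_i$ one takes the trivial homotopy, and the retractions furnished by Proposition \ref{retract} can be chosen relative to boundary data that is already in $\mathcal{T}_{min}$.
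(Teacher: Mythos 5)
Your proposal is correct and follows the same strategy as the paper: cubulate $\alpha_i$, induct on skeleta, trivialize the family of target minimal tori by ambient isometries over each cell, apply Proposition \ref{retract} in the trivialized picture, and push back. The one difference is in how the trivialization is produced: the paper observes that since $U=\Delta\cup(\partial\Delta\times[0,1])$ is contractible, the map $g|_U$ lifts along the fibration $\mathrm{isom}(M)\to\overline{\mathcal{T}}_{min}$ to a single $\tilde g\colon U\to\mathrm{isom}(M)$, and then sets $\Sigma_x=(\tilde g(x))^{-1}(F(x))$; this sidesteps the compatibility bookkeeping between neighbouring cells that you flag as the main obstacle when using cell-wise local trivializations $\Phi_{v(\Delta)}$ close to the identity (the style used in Proposition \ref{hair_parametric}). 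Both routes are sound, but the global lift over $U$ is cleaner here since you need the trivialized boundary data on $\partial U=\partial\Delta\times\{1\}$ to land exactly in $\mathcal{T}_{min}(C_q)$ before invoking Proposition \ref{retract}, and the lift achieves this automatically. Two small slips in your write-up: the data one trivializes is on $U$, not all of $\partial(\Delta\times[0,1])$ (the top face $\Delta\times\{1\}$ is not yet defined), and the part of the boundary known to lie in $\mathcal{T}_{min}(C_q)$ is $\partial\Delta\times\{1\}$, not $\Delta\times\{1\}$.
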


The proof of Proposition \ref{finalretractprop} is similar to that of Proposition \ref{hair_parametric} (where we use Proposition \ref{retract} in place of Proposition \ref{hair}).
\begin{proof}
    Fix a cubulation of the relative cycle $\alpha_i$. By Proposition \ref{retract}
    for each $x$ in the $0-$skeleton of the cubulation
    we can define a contraction of $f(x)$ to $\mathcal{T}_{min}$.

    Assume we have defined the homotopy $F$
    on the $(k-1)-$skeleton of $\alpha_i$.
    Let $\Delta$ be $k$-cell and let $U = \Delta \cup (\partial \Delta \times [0,1])$. To extend $F$
    to $\Delta \times [0,1]$ we will define a homotopy
    of $F|_{U}$
    that fixes $F$ on $ \partial U =  \partial \Delta \times \{ 1 \}$.

Consider the fibration $ \Phi:\mbox{isom}(M) \rightarrow \overline{\mathcal{T}}_{min}$.
     Let $g: \alpha_i\to\overline{\mathcal{T}}_{min}$
    denote the function from Lemma \ref{g}. Since $U$ is contractible,
    there exists a lift of the map 
    $g|_U$ to a map $\tilde{g}: U \rightarrow\mbox{isom}(M)$. 
    
    Let $(\tilde{g}(x))^{-1}$ denote the inverse of the isometry $\tilde{g}(x)$. Consider the family of surfaces
    $\Sigma_x = (\tilde{g}(x))^{-1}(F(x))$, $x \in U$.
    By Proposition
    \ref{retract} there exists a contraction $F'$ of $\Sigma_x$
    to $\mathcal{T}_{min}$
    that is constant on $\partial U$. 
    Then $F(x,t) = \tilde{g}(x)(F'(x,t))$ defines the
    desired homotopy.
\end{proof}

Theorem \ref{finalretract1} completes the proofs of Claims \ref{mainclaim} and \ref{mainclaim1}.    This completes the proof of Theorem \ref{main}.  

\section{Appendix}
The following (relative) version of Whitehead's theorem for pairs (cf. Theorem 1.4.7 in \cite{Baues}) is standard but we include it for the reader's convenience. 

For $A\subset X$ and $B\subset Y$ we write \begin{equation}f:(X,A)\to (Y,B)\end{equation} and say $f$ \emph{is a map of pairs} if $f:X\to Y$ and $f(A)\subset B$.

\begin{thm}[Whitehead theorem for pairs]\label{basichomotopy}
Suppose $f:X\to Y$ is a homotopy equivalence of CW-complexes and $A\subset X$ and  $B\subset Y$ are subcomplexes with $f(A)=B$ and $f|_A:A\to B$ a homeomorphism. Then for all $k\geq 1$,  the induced map
\begin{equation}
f_*: \pi_k(X,A)\to \pi_k(Y,B). 
\end{equation}
is an isomorphism.  
Moreover, there exists a map of pairs \begin{equation}g: (Y,B)\to (X,A)\end{equation} so that $f\circ g: (X,A)\to (X,A)$ and  $g\circ f: (Y,B)\to (Y,B)$ are each homotopic to the identity map through maps of pairs $(X, A) \rightarrow (X,A)$ and $(Y, B) \to (Y,B)$ (respectively).

\end{thm}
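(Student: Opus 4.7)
The plan is to split the proof into two parts corresponding to the two clauses of the statement: (i) showing $f_*: \pi_k(X,A) \to \pi_k(Y,B)$ is a bijection for all $k \geq 1$, and (ii) constructing a pair homotopy inverse $g: (Y,B) \to (X,A)$.

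For part (i), I would apply the Five Lemma to the commutative ladder of long exact sequences of the pairs $(X,A)$ and $(Y,B)$:
\begin{equation*}
\begin{tikzcd}[column sep=small]
\cdots \arrow[r] & \pi_k(A) \arrow[r] \arrow[d, "\cong"] & \pi_k(X) \arrow[r] \arrow[d, "\cong"] & \pi_k(X,A) \arrow[r] \arrow[d, "f_*"] & \pi_{k-1}(A) \arrow[r] \arrow[d, "\cong"] & \pi_{k-1}(X) \arrow[d, "\cong"] \\
\cdots \arrow[r] & \pi_k(B) \arrow[r] & \pi_k(Y) \arrow[r] & \pi_k(Y,B) \arrow[r] & \pi_{k-1}(B) \arrow[r] & \pi_{k-1}(Y).
\end{tikzcd}
\end{equation*}
The vertical arrows on the total spaces are isomorphisms because $f: X \to Y$ is a homotopy equivalence (equivalently, by absolute Whitehead, it induces isomorphisms on all $\pi_k$), and on the subcomplexes because $f|_A$ is a homeomorphism. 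The Five Lemma, applied in the category of groups for $k \geq 2$ and in the category of pointed sets for $k=1$, then yields the desired bijection on $\pi_k$ of the pairs.

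For part (ii), I would begin by applying absolute Whitehead to obtain a free homotopy inverse $g_0: Y \to X$ of $f$, with $f \circ g_0 \simeq \mathrm{id}_Y$ and $g_0 \circ f \simeq \mathrm{id}_X$. Let $h := (f|_A)^{-1}: B \to A$. Both restrictions $g_0|_B$ and $\iota_A \circ h: B \to X$ post-compose with $f$ to give $\iota_B$ (up to free homotopy for the first, exactly for the second), so the bijection $f_*: [B, X] \to [B, Y]$ induced by $f$ a homotopy equivalence forces $g_0|_B \simeq \iota_A \circ h$ as maps $B \to X$. Use the HEP of the CW-pair $(Y, B)$ to extend this homotopy of $g_0|_B$ to a homotopy of $g_0$ on all of $Y$, producing a freely homotopic map $g: Y \to X$ with $g|_B = \iota_A \circ h$. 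Thus $g: (Y,B) \to (X,A)$ is a pair map, and by construction $(f \circ g)|_B = \mathrm{id}_B$ and $(g \circ f)|_A = \mathrm{id}_A$ exactly.

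The main obstacle is upgrading the free homotopies $f \circ g \simeq \mathrm{id}_Y$ and $g \circ f \simeq \mathrm{id}_X$ to homotopies through pair maps. A free homotopy $H: Y \times I \to Y$ from $f \circ g$ to $\mathrm{id}_Y$ restricts on $B \times I$ to a loop in $\mathrm{Map}(B, Y)$ based at $\iota_B$, and the task is to modify $H$ so that $H_t(B) \subset B$ for all $t$. I would accomplish this by an obstruction-theoretic induction on the relative cells of $(Y, B)$: at each stage, the obstruction to extending a partial pair homotopy over a $k$-cell lies in $\pi_k(Y, B)$, which by part (i) corresponds via $f_*$ to $\pi_k(X, A)$. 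On the $X$-side the tautological pair homotopy $\mathrm{id}_X \simeq \mathrm{id}_X$ provides a filling with vanishing obstructions, so by the isomorphism the obstructions on the $Y$-side also vanish, allowing the cell-by-cell extension. A symmetric argument handles $g \circ f \simeq \mathrm{id}_X$, yielding $g$ as the desired pair homotopy inverse.
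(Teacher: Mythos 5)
Your part (i) coincides with the paper's proof: both apply the five lemma to the ladder of long exact sequences of the pairs, with the vertical maps on $\pi_k(X)\to\pi_k(Y)$ being isomorphisms because $f$ is a homotopy equivalence and on $\pi_k(A)\to\pi_k(B)$ because $f|_A$ is a homeomorphism. Your explicit remark about $\pi_1$ of a pair being a pointed set rather than a group is a refinement the paper elides. So far so good.

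The divergence is in part (ii). The paper does not prove the ``Moreover'' clause at all; it proves only the isomorphism of relative homotopy groups and relies on the citation to Baues (Theorem 1.4.7) for the existence of a pair homotopy inverse. You try to prove it. Your construction of $g$ as a strict map of pairs with $g|_B=(f|_A)^{-1}$, via the HEP of the CW pair $(Y,B)$ applied to the homotopy $g_0|_B\simeq \iota_A\circ(f|_A)^{-1}$ obtained from $f_*\colon[B,X]\to[B,Y]$ being a bijection, is correct. The gap is in your ``obstruction-theoretic'' upgrade of the free homotopy $f\circ g\simeq\mathrm{id}_Y$ to a pair homotopy. You assert the obstruction to extending a partial pair homotopy over a $k$-cell lies in $\pi_k(Y,B)$, but this is not the right obstruction group. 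What you are trying to build is a map $K\colon Y\times I\to Y$ with $K_0=fg$, $K_1=\mathrm{id}_Y$ and $K(B\times I)\subset B$; extending $K$ cell-by-cell over the relative cells of $(Y\times I,\; Y\times\partial I\cup B\times I)$ produces, for an $n$-cell, a map on a boundary sphere $S^n\to Y$, so the obstruction lives in $\pi_n(Y)$, not in $\pi_k(Y,B)$, and these groups need not vanish. Your appeal to part (i) to transport a ``filling'' from the $X$-side also does not close the loop: triviality of $\pi_n(X)\to\pi_n(Y)$ in the wrong degree is not what is needed. The standard correct route for (ii) is a Dold-type cofiber homotopy equivalence theorem: once $g$ is a strict pair map, $f\circ g\colon Y\to Y$ restricts to $\mathrm{id}_B$ and is a free homotopy equivalence, hence (since $B\hookrightarrow Y$ is a cofibration) a homotopy equivalence under $B$, and the resulting under-$B$ homotopies are exactly the desired pair homotopies; symmetrically for $g\circ f$. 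I'd recommend either citing this theorem as the paper does or replacing the obstruction step with the Dold/cofibration argument.
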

\begin{proof}
There is a long exact sequence
\begin{equation}
...\to\pi_k(A)\to\pi_k(X)\to\pi_k(X,A)\to\pi_{k-1}(A)\to\pi_{k-1}(X)\to...
\end{equation}
as well as
\begin{equation}
...\to\pi_k(B)\to\pi_k(Y)\to\pi_k(Y,B)\to\pi_{k-1}(B)\to\pi_{k-1}(Y)\to..
\end{equation}
The map $f$ induces vertical downward arrows from the first long exact sequence giving rise to commutative diagrams (the maps commute in each square).   Since $f$ induces isomorphisms from $\pi_k(X)$ to $\pi_k(Y)$ and $\pi_k(A)$ to $\pi_k(B)$,  the five lemma implies that the middle maps
 \begin{equation}
f_*: \pi_k(X,A)\to \pi_k(Y,B). 
\end{equation}
are isomorphisms for each $k$.
\end{proof}

\printbibliography
\end{document}